\documentclass[10pt]{article}
\usepackage{times}
\usepackage{graphicx}
\usepackage[T1]{fontenc}
\usepackage{amsmath,amssymb,amsfonts,amsthm,amscd}
\usepackage{eucal}
\usepackage[english]{babel}
\usepackage[utf8]{inputenc}
\usepackage{enumerate}
\usepackage{url}
\usepackage{mathpazo}
\usepackage{mathrsfs}
\usepackage{stmaryrd}
\usepackage{mathtools}
\usepackage{xcolor}
\usepackage{marvosym}
\usepackage{subcaption}
\usepackage[margin=1.0in]{geometry}
\usepackage{import}
\usepackage{xifthen}
\usepackage{pdfpages}
\usepackage{transparent}
\usepackage[small]{titlesec}
\usepackage{enumitem}
\usepackage{esint}
\usepackage[colorlinks]{hyperref}
\usepackage{pgfplots}
\pgfplotsset{compat=1.18}

\theoremstyle{plain}
\numberwithin{equation}{section}

\newtheorem{theorem}{Theorem}
\newtheorem{lemma}[theorem]{Lemma}

\newtheorem{proposition}[theorem]{Proposition}
\newtheorem{definition}[theorem]{Definition}
\newtheorem{remark}[theorem]{Remark}

\newtheorem{example}[theorem]{Example}

\newcommand{\tf}[1]{\overset{\circ}{#1}}

\newcommand{\R}{\mathbb{R}}

\newcommand{\Imag}{\mathrm{Im}\,}

\newcommand{\g}[2]{\left\langle #1,#2\right\rangle}
\newcommand{\gBig}[2]{\Big\langle #1,#2\Big\rangle}
\newcommand{\gbig}[2]{\big\langle #1,#2\big\rangle}
\newcommand{\norm}[1]{\left\Vert #1\right\Vert}
\newcommand{\abs}[1]{\left| #1\right|}

\newcommand{\p}[1]{{\left( #1\right)}}
\newcommand{\pbig}[1]{{\big( #1\big)}}

\newcommand\SF{\text{\Gemini}}
\newcommand\MC{\mathcal{H}}

\newcommand\PS{\partial M}
\newcommand\Lie{\mathcal{L}}
\newcommand{\sq}[1]{\left[#1\right]}

\newcommand{\tb}[1]{\left\{#1\right\}}
\newcommand{\tr}{\mathrm{tr}\,}
\newcommand{\trgt }{\mathrm{tr}_{g^T}}

\DeclareMathOperator{\Ric}{Ric}
\DeclareMathOperator{\scal}{scal}
\DeclareMathOperator{\trace}{tr}

\colorlet{linkequation}{blue}

\newcommand*{\SavedEqref}{}
\let\SavedEqref\eqref
\renewcommand*{\eqref}[1]{%
  \begingroup
    \hypersetup{
      linkcolor=linkequation,
      linkbordercolor=linkequation,
    }%
    \SavedEqref{#1}%
  \endgroup
}

\begin{document}

\author{Rasmus J. Jouttij\"arvi\\ KTH}
\title{On the Stability of Einstein Manifolds with Boundary}
\maketitle

\textbf{Abstract:} 
We study the stability problem for Einstein manifolds with boundary with respect to the Einstein-Hilbert action. The geometric boundary conditions we are using arise naturally from studying the calculus of variations associated with Ricci flow, \cite{RJJ}. Upon the introduction of a boundary, the space of $TT_g$ tensors no longer arise naturally as the defining space for the stability condition. Thus we must settle for the larger subspace of tensors preserving the scalar curvature, the total volume and the Bianchi gauge condition, which we call the space of  $TV_g$ tensors. As a test-case, we shall discuss stability of the Riemannian Schwarzschild anti-deSitter family of metrics; a problem known as "a Black hole in a box".

\tableofcontents

\pagebreak

\section{Introduction}

Throughout, $M$ shall denote an $(n\geq 3)$-dimensional manifold with closed compact boundary $\partial M$. We shall assume that $M$ is equipped with a Riemannian metric $g$, while $\partial M$ is equipped with the metric $g^T$ induced by $g$. Recall that $g$ is called Einstein when its Ricci curvature satisfies
$$\Ric_g=\mu g\qquad \text{for some }\;\; \mu \in \mathbb{R}$$
Let $\gamma$ be a Riemannian metric on $\partial M$, and let $H:\partial M\to \R$ be an arbitrary function. These will be assumed smooth, unless otherwise stated. Let $\mathcal{M}$ be a section of the set of metrics on $M$, whose regularity we will specify when needed. The pair $(\gamma,H)$ is in itself unimportant, save that it allows us to define the space of metrics
$$\mathcal{M}_B:=\tb{g\in \mathcal{M}\; :\;g^T=e^{\Psi}\gamma,\; \MC_g=e^{-\frac{\Psi}{2}}H\; \;\text{on} \;\PS}.$$
Here, $\MC_g$ denotes the scalar mean curvature of $g$, and $\Psi$ is a function introduced to give a scale factor between $g^T$ and $\MC_g$.\\

In Proposition \ref{prop:split} we show that when $g\in \mathcal{M}_B$ is Einstein and $\frac{\scal_g}{n-1}\notin \sigma_N(\Delta)$, the Neumann spectrum of the Laplacian, the tangent space at $g$ decomposes into 
$$T_{g}\mathcal{M}_B=C^\infty_{N}(M)\cdot g\oplus \ker_g D_g\scal,$$
where $C^\infty_{N}(M)$ is the space of smooth Neumann functions, and $\ker_g$ denotes kernel elements of zero total trace. The decomposition allows us to conclude the following slice theorem, Theorem \ref{MB_cor}:
\begin{theorem} Let $\mathcal{M}_B$ be of H\"older regularity $C^{k,\alpha}$, and let $\mathcal{C}_B\subset\mathcal{M}_B$ be the space of constant scalar curvature metrics. Suppose $g\in \mathcal{C}_B$ is Einstein and $\frac{\scal_g}{n-1}\notin \sigma_N(\Delta)$, then there is a local diffeomorphism 
$$C_{g,N}^{k,\alpha}(M)\times \mathcal{C}_B\longrightarrow \mathcal{M}_B,$$
in a neighbourhood of $g$. Specifically, any metric close to $g$ is related, via a Neumann function with zero mean, to a constant scalar curvature metric.  
\end{theorem}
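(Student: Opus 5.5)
The plan is to apply the inverse function theorem (in the Banach setting of $C^{k,\alpha}$ metrics) to the map
$$\Phi: C^{k,\alpha}_{g,N}(M)\times \mathcal{C}_B\longrightarrow \mathcal{M}_B,\qquad \Phi(u,\tilde g)=e^{u}\tilde g,$$
where $C^{k,\alpha}_{g,N}(M)$ is the space of Neumann functions ($\partial_\nu u=0$ on $\PS$) with zero mean with respect to $dV_g$.

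The first step is to check that $\Phi$ really lands in $\mathcal{M}_B$. For $\tilde g\in\mathcal{M}_B$ we have $(e^u\tilde g)^T=e^{u+\Psi}\gamma$. Under a conformal change the mean curvature transforms as
$$\MC_{e^u\tilde g}=e^{-u/2}\big(\MC_{\tilde g}+\tfrac{n-1}{2}\partial_\nu u\big),$$
so the Neumann condition gives $\MC_{e^u\tilde g}=e^{-(u+\Psi)/2}H$. Hence $e^u\tilde g\in\mathcal{M}_B$, with $\Psi$ replaced by $\Psi+u$. This is exactly why the Neumann condition appears.

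The second step is to identify the tangent spaces and compute the differential at $(0,g)$:
$$D\Phi_{(0,g)}(v,h)=vg+h.$$
Near an Einstein $g$ with $\frac{\scal_g}{n-1}\notin\sigma_N(\Delta)$, I will show that $\mathcal{C}_B$ is a Banach submanifold with tangent space $\ker_g D_g\scal$, which here should be read as the infinitesimal constant scalar curvature tensors, normalised to zero total trace. To do this, I consider the map
$$\mathcal{M}_B\to C^{k-2,\alpha}(M)/\R,\qquad \tilde g\mapsto \scal_{\tilde g}\ \text{mod constants},$$
and show it is a submersion at $g$. Its derivative in conformal directions $vg$, with $v$ Neumann, is
$$D_g\scal(vg)=-(n-1)\Delta v-\scal_g v,$$
up to the paper's sign convention for $\Delta$. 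With Neumann boundary conditions, this operator is invertible exactly when $\frac{\scal_g}{n-1}\notin\sigma_N(\Delta)$. So the map is surjective, and the implicit function theorem gives the submanifold structure of $\mathcal{C}_B$ with the claimed tangent space.

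The third step is the decomposition itself. Proposition \ref{prop:split} gives
$$T_g\mathcal{M}_B=C^\infty_N(M)\cdot g\oplus\ker_g D_g\scal,$$
and it should carry over verbatim to $C^{k,\alpha}$ regularity. The zero-mean normalisation on the $u$ factor and the zero-total-trace normalisation on the kernel factor have to fit together: constant multiples of $g$ must be accounted for exactly once. I will check this bookkeeping so that $D\Phi_{(0,g)}$ is a linear isomorphism. Once it is, the inverse function theorem gives the local diffeomorphism. The "specifically" clause then follows directly: any metric near $g$ equals $e^u\tilde g$ with $u$ a zero-mean Neumann function and $\tilde g$ of constant scalar curvature.

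I expect the main obstacle to be the functional-analytic setup rather than the algebra. One must show that $\mathcal{M}_B$ is itself a Banach manifold near $g$, since the boundary conditions are nonlinear in $g$ and the extra conformal parameter $\Psi$ has to be treated correctly. One must also establish elliptic Schauder estimates for the Neumann problem so that the operator above is an isomorphism between the right Hölder spaces, including the compatibility condition on the normal derivative. I would handle this by taking the boundary condition from the tangent-space description already used in Proposition \ref{prop:split}, and by invoking standard Neumann Schauder theory for $(n-1)\Delta+\scal_g$.
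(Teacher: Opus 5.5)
Your outline matches the paper's proof. There too one applies the implicit function theorem to the normalised scalar curvature $\Phi(g)=\scal_g-\frac{1}{\abs{M}_g}\int_M\scal_g\,dV_g$, which is equivalent to your ``$\scal$ modulo constants''. Surjectivity comes from the invertibility of $P_g(\psi g)=(n-1)\big(\Delta-\frac{n\mu}{n-1}\big)\psi$ on $C^{k,\alpha}_{g,N}(M)$, the complement of the kernel comes from \eqref{Esplitting}, and the paper finishes with the inverse function theorem for $(f,\tilde g)\mapsto(1+f)\tilde g$; using $e^u$ instead of $1+f$ changes nothing. The bookkeeping you postpone resolves as follows. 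Since $P_g(cg)=-c\,\scal_g$ is constant, the kernel of your submersion is $T_g\mathcal{C}_B=\R\cdot g\oplus\ker_gP_g$, not its zero-trace part. So the rescalings belong to the $\mathcal{C}_B$ factor, and $(v,h)\mapsto vg+h$ is an isomorphism $C^{k,\alpha}_{g,N}(M)\times T_g\mathcal{C}_B\to T_g\mathcal{M}_B$.

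There is, however, a genuine hole in your first step, and the paper's proof has the same one. Your formula $\MC_{e^u\tilde g}=e^{-u/2}\big(\MC_{\tilde g}+\frac{n-1}{2}\nu_{\tilde g}(u)\big)$ is correct. But it needs $u$ to be Neumann for the normal of $\tilde g$, whereas your first factor is the fixed space $C^{k,\alpha}_{g,N}(M)$, defined through $\nu_g$. These normals really do differ along $\mathcal{C}_B$. Let $\phi$ be a diffeomorphism near the identity fixing $\PS$ pointwise, with $d\phi(\nu_g)=a\nu_g+X$ on $\PS$ for some nonzero tangential $X$ (e.g.\ the short-time flow of a vector field vanishing on $\PS$ with tangential normal derivative). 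Then $\tilde g=\phi^*g$ lies in $\mathcal{C}_B$: it has the same induced metric and mean curvature on $\PS$ and constant scalar curvature, and it is close to $g$. But $\nu_{\tilde g}=a^{-1}(\nu_g-X)$, so $\nu_{\tilde g}(u)=-a^{-1}X(u)\neq 0$ for generic $u\in C^{k,\alpha}_{g,N}(M)$, and then $e^u\tilde g\notin\mathcal{M}_B$. So the map does not take values in $\mathcal{M}_B$, and the inverse function theorem cannot be applied to it as written.

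A repair: fix a boundary defining function $\rho$ and a bounded linear lift $L:C^{k-1,\alpha}(\PS)\to C^{k,\alpha}(M)$ with $Lf=0$ and $d(Lf)=f\,d\rho$ on $\PS$. Let $\Pi_{\tilde g}u$ be $u-L\big(\nu_{\tilde g}(u)/\nu_{\tilde g}(\rho)\big)$ minus its $dV_{\tilde g}$-mean. Then $\Pi_{\tilde g}:C^{k,\alpha}_{g,N}(M)\to C^{k,\alpha}_{\tilde g,N}(M)$ is an isomorphism depending smoothly on $\tilde g$, with $\Pi_g=\mathrm{id}$; the same construction with $g$ and $\tilde g$ exchanged inverts it. The map $(u,\tilde g)\mapsto e^{\Pi_{\tilde g}u}\tilde g$ lands in $\mathcal{M}_B$ and has the same differential $vg+h$ at $(0,g)$, so the rest of your argument goes through unchanged.
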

By combining the above decomposition with another, we refine the tangent space, in Proposition \ref{bianchisplit}, to
\begin{equation}\label{intro:split}T_g\mathcal{M}_B=C^\infty_{N}(M)\cdot g\oplus TV_g\oplus \mathrm{Im}\; \delta^*|_{\Omega_0}.\end{equation}
where $TV_g$ is the Bianchi gauged subspace of $\ker_g D_g\scal$, and $\Imag\delta^*|_{\Omega_0}$ is the space of infinitesimal diffeomorphisms fixing the boundary. In this context, the Bianchi gauge refers to the kernel of the Bianchi operator, $\beta_g=\delta_g +\frac{1}{2}d\mathrm{tr}_g$, which also appears under a variety of other names in the literature (deDonder, deTurck, Harmonic, etc.). This decomposition is a natural analogue of the standard decomposition in the closed case;
$$T_g\mathcal{M}=C^\infty(M) \cdot g\oplus TT_g\oplus\Imag \delta^*,$$
when $M$ is closed (see Remark \ref{noboundsplitting}).\\

Let $g_t$ be a curve of metrics in $\mathcal{M}_B$ with $g_{t=0}=g$ Einstein. Then the second variation of the Einstein--Hilbert action $S_g$ depends only on the linear term $h:=g'_{t=0}\in T_g\mathcal{M}_B$;
$$S''_g(h)=-\frac{1}{2}\p{F_gh,h}_{L^2},$$
where
$$F_gh:=\Delta_E h-2\delta^*_g\beta_g h-D_g\scal (h)\cdot g.$$
As it determines the extremality properties of $g$ as a (semi-)critical point, it is called the Einstein--Hilbert stability operator. $F_g$ is non-trivial on the first two factors of \eqref{intro:split}, and in most cases it will have a definite sign on the first. However, as one may not always rule out negative eigenvalues for the operator on $C^\infty_{N}(M)\cdot g$, we shall distinguish between \emph{conformal stability} and \emph{mode stability} - that which is defined on $TV_g$. On this space the stability operator reduces to the usual elliptic Einstein operator $\Delta_E$. We prove in Proposition \ref{prop:fredholm} of Section \ref{sec:stab}, that the given boundary conditions leads to a well-posed elliptic eigenvalue problem. Specifically, $\Delta_E$ is a self-adjoint Fredholm operator on $TV_g$.\\

The fact that positive Einstein manifolds with boundary may be infinitesimally conformally unstable, is one of the major differences between this and the closed case. In that case $F_g$ is  positive definite on the conformal part of the tangent space for every Einstein metric $g$, apart from the round sphere, for which $F_g$ is positive semi-definite. However, we shall show that it is still possible to obtain an Obata-type uniqueness statement for Einstein manifolds with boundary (see \cite{Oba} for the original, and \cite{Aku} for a similar result).

\begin{theorem} Let $g_1,g_2\in \mathcal{M}_B$ be Einstein metrics in the same conformal class, $\sq{g_1}=\sq{g_2}$. If $g_1$ is not isometric to a round hemisphere, then $g_1=cg_2$ for some $c>0$. If $g_1$ is isometric to a hemisphere, then the same conclusion holds, up to isometry.
\end{theorem}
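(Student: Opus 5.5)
Write $g_2=u^{-2}g_1$ (or $g_2 = e^{2f}g_1$) with $u>0$ smooth, and set $g:=g_1$. The plan is the classical Obata argument, adapted to the boundary conditions that define $\mathcal{M}_B$.

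\textbf{Step 1: boundary conditions on $u$.} Both metrics lie in $\mathcal{M}_B$, so on $\partial M$ we have $g_1^T=e^{\Psi_1}\gamma$, $g_2^T=e^{\Psi_2}\gamma$, $\MC_{g_1}=e^{-\Psi_1/2}H$ and $\MC_{g_2}=e^{-\Psi_2/2}H$. The first pair gives $u^{-2}=e^{\Psi_2-\Psi_1}$ on $\partial M$. The conformal change formula for mean curvature, $\MC_{u^{-2}g}=u\,\MC_g-(n-1)\partial_\nu u$ (up to sign conventions), combined with the second pair yields $\MC_{g_2}=u\,\MC_{g_1}$. Comparing the two gives $\partial_\nu u=0$ on $\partial M$. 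So $u$ is a Neumann function.

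\textbf{Step 2: the Hessian equation.} Since both metrics are Einstein, the trace-free part of the conformal transformation law for Ricci gives
$$\mathrm{Hess}_g u=\tfrac{\Delta_g u}{n}\,g\quad\text{on } M.$$
Taking the divergence and using $\Ric_g=\mu g$ gives $d\big(\Delta u+\tfrac{\mu n}{n-1}u\big)\cdot\text{const}=0$. Hence $\Delta u=-\tfrac{\scal_g}{n-1}u+c$, and so $\nabla^2 u=(-\tfrac{\mu}{n-1}u+c')g$.

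\textbf{Step 3: conclude.} Set $v=u-\bar c$, where $\bar c$ absorbs the constant when $\mu\neq 0$. Then $v$ is a Neumann eigenfunction of $\Delta$ with eigenvalue $\frac{\scal_g}{n-1}$ whose Hessian is a multiple of $g$. There are three cases.
\begin{itemize}
\item If $\mu<0$, or $\mu=0$: integrating $\Delta u$ against $u$ with Neumann conditions gives $\int|\nabla u|^2=\int u\,\Delta u$ of a sign forcing $\nabla u=0$. For $\mu=0$, $\Delta u=c$ and the Neumann condition force $c=0$, so $u$ is harmonic Neumann and hence constant.
\item If $\mu>0$ and $v\neq0$: $v$ satisfies Obata's equation $\nabla^2 v=-\tfrac{\mu}{n-1}vg$ together with $\partial_\nu v=0$ on the boundary. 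The standard Obata/Reilly argument then applies. Along unit-speed geodesics from a maximum point of $v$, $v$ restricts to $A\cos(\sqrt{\mu/(n-1)}\,t)$, which exhibits $(M,g)$ as a warped product $dt^2+\sin^2(\cdot)\,g_{S^{n-1}}$.
\item Still with $\mu>0$: the boundary is a level set of $v$ that is totally geodesic, since $\partial_\nu v=0$ makes $\nabla v$ tangent to $\partial M$ and the second fundamental form is proportional to the Hessian restricted to the boundary divided by $|\nabla v|$. This identifies $\partial M$ with the equator, and $M$ with a round hemisphere, as in \cite{Aku}.
\end{itemize}
Outside the hemisphere case $u$ is constant, so $g_1=cg_2$.

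\textbf{Step 4: the hemisphere.} On the hemisphere, the nonconstant $u$ correspond to conformal diffeomorphisms preserving the equator. The conformal factors coming from Möbius maps of the hemisphere give $g_2=\phi^*g_{1}$ up to scale, which is the "up to isometry" conclusion.

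The main obstacle is Step 3 for $\mu>0$. One must show rigorously that a nonconstant Neumann solution of Obata's equation forces the round hemisphere: the critical point of $v$ must lie in the interior (or be handled on the boundary), and the geodesic argument must survive hitting $\partial M$. I would first try Reilly's formula to get $\int(\Delta v)^2-|\nabla^2v|^2$, using that boundary terms vanish by the Neumann condition. If that fails, I would fall back on doubling $M$ across its totally geodesic boundary to reduce to the closed Obata theorem.
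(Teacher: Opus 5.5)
Your Steps 1--2 and the cases $\mu\le 0$ agree with the paper's proof. Step 1 usefully verifies the Neumann property of the conformal factor, which the paper takes from its remark on $\mathcal{M}_B$. For $\mu<0$, integrate against $v=u-\bar c$ rather than $u$, since $\Delta u$ carries an additive constant.

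The gap is the case $\mu>0$, which is the substance of the theorem, and the argument you give there does not work. First, $\partial M$ cannot be a level set of $v$. Combined with $\partial_\nu v=0$, that would force $\nabla v\equiv 0$ along $\partial M$. This is impossible for a nontrivial solution of $\nabla^2v=-\tfrac{\mu}{n-1}v\,g$: critical points with $v\neq 0$ are nondegenerate, hence isolated, and $v=|\nabla v|=0$ at a point forces $v\equiv 0$. On the model hemisphere $\{x_{n+1}\ge 0\}$ the Neumann solutions are $x_1,\dots,x_n$, whose level sets meet the equator orthogonally. The function having the equator as a level set is $x_{n+1}$, which is a Dirichlet eigenfunction, not a Neumann one. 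So the level-set formula for $\SF$ does not apply. What the Neumann condition actually gives is $\nabla^2 v(X,\nu)=0$, i.e.\ $\SF(\nabla^T v,X)=0$ for $X$ tangent, which does not yield total geodesy. Your doubling fallback is therefore unavailable as well. Second, the maximum of $v$ lies on $\partial M$ ($x_1$ peaks at a point of the equator). The exponential-map argument must then be run from a boundary point, which is exactly the difficulty you flag and leave open.

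The missing observation is that no new rigidity argument is needed. For $\mu\neq 0$, $v$ is Neumann and has zero mean: integrate $\Delta v=\tfrac{n\mu}{n-1}v$ and use $\partial_\nu v=0$. Moreover $P^*_g v=(\Delta v-\mu v)g+\nabla^2 v=\big(\tfrac{n\mu}{n-1}-\mu-\tfrac{\mu}{n-1}\big)v\,g=0$. Hence $0\neq v\in\ker P^*_g|_{C^\infty_{g,N}(M)}$, and Escobar's Theorem~\ref{Escobar4.2} forces $(M,g_1)$ to be a round hemisphere. This is precisely the paper's step.

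For the hemisphere case, your M\"obius description can be completed using the paper's identification of this kernel with $\mathrm{span}\{x_1,\dots,x_n\}$. The paper's route is shorter, though. By Proposition~\ref{properties_of_MB}, $g_2$ also has positive scalar curvature. The same argument with the roles of $g_1$ and $g_2$ swapped then shows $g_2$ is a round hemisphere too, hence isometric to a rescaling of $g_1$.
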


The remaining sections will be dedicated to some explicit examples. First, in Section \ref{Sec:confunstab}, we exhibit two examples of conformally unstable Einstein metrics, adaptable to arbitrary dimension $n\geq 3$. \\

In Section \ref{sec:SADS}, we prove mode stability of the family of Riemannian Schwarzschild anti-deSitter metrics in a spherical cavity. We obtain the following stability theorem, Theorem \ref{thm:SADS}:
\begin{theorem} Every member of the Schwarzschild anti-deSitter family of metrics in dimension $n\geq 4$ is stable in a spherical cavity of radius $R=\p{(n-1)m}^{1/(n-3)}$, with respect to spherically symmetric perturbations. The stability is strict when the Einstein constant is non-zero.
\end{theorem}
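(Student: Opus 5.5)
The plan is to reduce mode stability on spherically symmetric perturbations to positivity of a self-adjoint ODE operator in the radial variable. Write the Riemannian Schwarzschild anti-deSitter metric as
$$g=V(r)^{-1}dr^2+V(r)\,d\tau^2+r^2 g_{S^{n-2}},\qquad V(r)=1-\frac{2m}{r^{n-3}}+\lambda r^2,$$
with $\lambda$ proportional to the Einstein constant $\mu$. The manifold is $M=\{r_+\le r\le R\}\times S^1\times S^{n-2}$, where $r_+$ is the largest root of $V$ and the $\tau$-period is fixed by smoothness at $r_+$. The boundary $\PS=\{r=R\}\cong S^1\times S^{n-2}$.

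\textbf{Step 1: reduce to ODEs.} By the results of Section \ref{sec:stab}, stability means $\p{\Delta_E h,h}_{L^2}\ge 0$ on $TV_g$, where $\Delta_E$ is self-adjoint Fredholm by Proposition \ref{prop:fredholm}. Spherically symmetric elements of $TV_g$ have the form
$$h=a(r)\,V^{-1}dr^2+b(r)\,V\,d\tau^2+c(r)\,r^2g_{S^{n-2}},$$
possibly with a $dr\,d\tau$ term, which I expect to decouple by $\tau\mapsto-\tau$ symmetry. I would impose the $TV_g$ conditions in turn:
\begin{itemize}
\item Bianchi gauge $\beta_g h=0$, which gives one first-order ODE relating $a,b,c$;
\item $D_g\scal(h)=0$, which for Einstein $g$ reads $\Delta\tr h+\delta\delta h-\mu\tr h=0$ and, combined with the gauge, becomes an ODE for $\tr h$;
\item zero total trace;
\item the linearized boundary conditions from $\mathcal{M}_B$ at $r=R$: $h^T=\psi\gamma$ and $D\MC(h)=-\tfrac{\psi}{2}\MC$;
\item smoothness at the bolt $r=r_+$.
\end{itemize}
Using these I would eliminate variables and reduce $h$ to essentially one free function $u(r)$, for instance $u=b-c$, the part transverse to pure conformal and gauge modes. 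Then $\p{\Delta_E h,h}$ becomes a Sturm–Liouville quadratic form
$$Q(u)=\int_{r_+}^R\p{p(r)u'^2+q(r)u^2}\,w(r)\,dr+\text{boundary term at }R.$$

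\textbf{Step 2: use the special radius.} The choice $R=\p{(n-1)m}^{1/(n-3)}$ should be exactly the radius at which the boundary term in the integration by parts vanishes or acquires a favourable sign. I expect $R$ to be where some combination such as $rV'-2V$, or the derivative of $\MC_g^2 r^2$ (which controls $D\MC$ along the $\tau$ and sphere directions), changes sign, so the mean curvature condition decouples. I would verify this by computing $\MC_g(R)=\sqrt{V}\p{\frac{V'}{2V}+\frac{n-2}{r}}$ and checking that the relevant Robin coefficient is $\ge0$ precisely at this $R$. Here $\lambda$ should cancel because $\lambda r^2$ terms contribute identically to $rV'$ and $2V$.

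\textbf{Step 3: show $Q\ge0$, strictly when $\mu\neq0$.} I would compute the radial Lichnerowicz potential $q$ explicitly. The curvature terms of $\Delta_E=\nabla^*\nabla-2\mathring{R}$ on these diagonal modes involve the sectional curvatures $\pm\frac{m}{r^{n-1}}$-type terms together with $-\lambda$. The aim is to write $q=q_0+c_n\mu$, with $q_0\ge0$ after a substitution $u=\phi v$, where $\phi$ is a positive zero-mode of the $\mu=0$ problem. A natural candidate for $\phi$ is the $\partial_m g$ or scaling mode, which solves the linearized Einstein equation. This gives $Q(u)=\int\phi^2 v'^2\,p\,w\,dr+c_n\mu\int u^2+\dots$.

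The main obstacle is that the sign of the $\mu$-term must be shown to help for both signs of $\mu$ in the admissible family. Since the theorem claims strict stability for all $\mu\neq0$, I would expect the extra term to be proportional to $\mu^2$, or to $|\mu|$ after using the constraint $D\scal(h)=0$, which pins $\tr h$ and yields a term $\mu^2\int(\tr h)^2$. If the naive potential fails to be positive, I would fall back on a Hardy-type inequality on $[r_+,R]$ that exploits vanishing of $u$ at the bolt.
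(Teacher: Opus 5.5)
\textbf{There is a genuine gap.} The whole argument would have to happen in your Steps 2--3, and as outlined they do not go through.

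First, the obstacle you identify is not the real one. In the SAdS family $\Ric_g=-\mu g$ with $\mu\ge 0$, so only one sign occurs. The relevant eigenvalue of $\Delta_E$ is exactly $\lambda=\frac{n-2}{n-1}\mu$, which is linear in $\mu$. A gain that is positive for both signs of the Einstein constant, as you anticipate, cannot exist: with the opposite sign the same computation yields a negative eigenvalue. Nor is there a term $\mu^2\int(\tr h)^2$. For static $h$ one has $\delta_{g^T}(h\cdot\nu)=0$, so \eqref{BC2nongeometric} together with $\beta_gh=0$ reduces to $\nu(\tr h)=0$. Then $2P_gh=(\Delta+2\mu)\tr h=0$ with zero total trace forces $\tr h\equiv0$, so you are really working in $TT_g$. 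Likewise the $dr\,d\tau$ term is not merely decoupled but eliminated: the $t$-component of $\delta h=0$ gives $h_{rt}=Cr^{2-n}V^{-1}$, and smoothness at the bolt forces $C=0$.

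Second, the one-function reduction degenerates exactly at your boundary. Transversality expresses $\psi$ through \eqref{diveq}, with $(rV'-2V)^{-1}$ in the coefficients, and $rV'-2V=\frac{2(n-1)m}{r^{n-3}}-2$ vanishes precisely at $R=((n-1)m)^{1/(n-3)}$. So $r=R$ is a regular singular point of the master equation, with Frobenius exponents $0$ and $3$. It is not a regular Robin endpoint with a boundary term you could make vanish.

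Third, the ground-state substitution needs a positive eigenfunction of the actual $\mu$-dependent problem in $TV_g$. The scaling mode is excluded: it has nonzero total trace, and $u=b-c\equiv0$ for it. A $\mu=0$ zero mode is of no use for $\mu\neq0$, since $V$, $r_0$ and $\beta$ all depend on $\mu$, and there is no splitting $q=q_0+c_n\mu$.

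The idea you are missing is what the special radius actually is. The condition $rV'=2V$ says that $\partial M_R$ is totally umbilic. The paper uses this to compute the eigenvalue exactly rather than to estimate a quadratic form:
\begin{itemize}
\item For $n=4$ it imposes \eqref{BC1}, i.e.\ $\psi(R)=-\chi(R)/3$, on the $k=0$ Frobenius solution at $r=3m$, which forces $\lambda=2\mu/3$.
\item For general $n$ it applies the constraint condition \eqref{hidden1}. With $\tf{\SF}=0$, $\tr h=0$, $\Delta_{g^T}\varphi=0$ for radial modes, and the sign of the Einstein constant adjusted, it becomes $\left(\mu-\frac{n-1}{n-2}\lambda\right)\varphi=0$. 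Hence every such eigenmode with $\varphi\not\equiv0$ has $\lambda=\frac{n-2}{n-1}\mu\ge0$, strictly positive when $\mu>0$.
\end{itemize}
Conformal stability, which your Step 1 drops, is automatic here since the Einstein constant is nonpositive.

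If you prefer to keep a variational route, the right comparison tensor is explicit. Take $f$ radial, nonconstant, smooth at the bolt, with $\left(\Delta+\frac{n\mu}{n-1}\right)f$ constant. Then $h=\tf{\nabla^2 f}$ is transverse traceless with $\Delta_Eh=\frac{n-2}{n-1}\mu\,h$. It satisfies \eqref{BC1} only because $(\nabla^2f)^T=\nu(f)\SF$ and $\tf{\SF}=0$ at $R$; the Neumann condition is automatic for static TT modes. At $\mu=0$ it is your $\partial_mg$ candidate up to a Lie derivative term. You would still need an argument that no spherically symmetric mode lies below it. In the paper this comes from the Frobenius and constraint analysis; in your outline it is precisely the unproved positivity of $Q$.
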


\noindent Furthermore, in the case of the four dimensional Schwarzschild metric, our choice of boundary conditions resolves the "discrepancy" between the radius at which the black hole is thermodynamically stable, $R=3m$, and the radius at which the isothermal eigenmode appears, $R\sim 2.89m$ \cite{All}. See also \cite{Pre} for commentary.
\begin{theorem}
The four dimensional Schwarzschild metric is unstable for $R>3m$.
\end{theorem}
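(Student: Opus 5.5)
To show instability for $R>3m$, I will exhibit an explicit direction $h\in T_g\mathcal{M}_B$ along which the second variation $S''_g(h)=-\frac12(F_gh,h)_{L^2}$ has the destabilising sign. The natural candidate is the spherically symmetric "thermodynamic" mode, i.e.\ a variation within the Schwarzschild family in the cavity. Write the four dimensional Riemannian Schwarzschild metric as
$$g=V(r)\,d\tau^2+V(r)^{-1}dr^2+r^2 g_{S^2},\qquad V=1-\frac{2m}{r},$$
with $\tau$ of period $8\pi m$ and cavity boundary $r=R$.

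\textbf{Step 1: the test family.} Consider $g_s$ given by the Schwarzschild metric with mass $m+s$, rescaled by a constant and with a radial reparametrisation, so that $g_s\in\mathcal{M}_B$. The induced boundary metric must be conformal to $\gamma=g^T$, a product $S^1_{\beta}\times S^2_R$, and the mean curvature must match $e^{-\Psi/2}H$. Here conformality of $g^T$ fixes the ratio of the boundary circle length to the sphere radius, $\beta\sqrt{V(R)}/R$, and the mean-curvature condition fixes the scale-invariant quantity $\MC\cdot R$. Since the family is Ricci flat, $\scal\equiv 0$ along it, so $h=g'_s$ lies in $\ker D_g\scal$. Using the splitting \eqref{intro:split}, I then project $h$ onto $TV_g$ by subtracting its $\Imag\delta^*|_{\Omega_0}$ part, so that we are testing mode stability, where $F_g=\Delta_E$. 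The trace/conformal part should vanish up to volume normalisation, because $\mu=0$.

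\textbf{Step 2: the sign.} On $TV_g$, $(F_gh,h)=(\Delta_E h,h)$. Instead of computing $\Delta_E h$ directly, I will use that $S$ restricted to the one-parameter family, modulo the constraints, equals a reduced action $I(m;R)$. By the boundary conditions this is essentially the Euclidean free energy $\beta F$ of York's black hole in a box, with the constraint on $\beta\sqrt{V(R)}/R$ and on $\MC R$. Then $S''$ along the family equals $d^2 I/dm^2$ at fixed boundary data, and its sign is governed by the heat capacity $C=-\beta^2\,\partial^2_\beta(\beta F)$. For York's box, $C\propto (R-3m)^{-1}$ up to sign, and it becomes negative exactly when $R>3m$. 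The explicit computation should give
$$S''(h)\propto -\frac{1}{R-3m}$$
times a positive factor, so the sign flips at $R=3m$. This matches the theorem and differs from the isothermal mode at $R\approx 2.89m$ precisely because our boundary condition fixes $\MC$ and not the full $g^T$.

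\textbf{Main obstacle.} The hard part is justifying that the reduced second derivative equals $-\frac12(\Delta_E h,h)_{L^2}$ with $h$ in $TV_g$. This means checking that the gauge projection does not change the quadratic form, and that the boundary terms in the integration by parts vanish under the $\mathcal{M}_B$ boundary conditions. An alternative fallback is to compute directly: take the spherically symmetric ansatz $h=a(r)d\tau^2+b(r)dr^2+c(r)r^2g_{S^2}$, impose the Bianchi gauge and the linearised boundary conditions, evaluate the Rayleigh quotient $(\Delta_E h,h)/\|h\|^2$ on the explicit trial field obtained from Step 1, and show it is negative for $R>3m$.
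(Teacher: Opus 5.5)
Your Step 1 cannot be carried out, and the rest of the argument depends on it. Modulo rescalings and diffeomorphisms, a Schwarzschild metric in a spherical cavity depends only on $x=2m/R$. $\mathcal{M}_B$ freezes two scale-invariant pieces of boundary data. The first is the conformal class of $S^1\times S^2$, measured by $\beta\sqrt{V(R)}/R=4\pi x\sqrt{1-x}$. The second is $\MC_g R=(2-\tfrac32 x)/\sqrt{1-x}$. Their $x$-derivatives are $2\pi(2-3x)/\sqrt{1-x}$ and $-(2-3x)/\bigl(4(1-x)^{3/2}\bigr)$. Both vanish only at $x=2/3$, i.e.\ $R=3m$, where the boundary is umbilic. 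So for $R\neq 3m$ the only directions tangent to your family that lie in $T_g\mathcal{M}_B$ are rescalings and gauge, and there is no $h$ to test.

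At $R=3m$ the direction does exist, but then it is tangent to a curve of Ricci-flat metrics in $\mathcal{M}_B$. Along such a curve $S(g_s)=\int_M\scal_{g_s}\,dV_{g_s}\equiv 0$, and directly from the definition $F_gh=2D_g\Ric(h)-P_gh\cdot g=0$. Since $F_g\circ\delta^*_g=0$, the $TV_g$-component of $h$ is also in $\ker F_g$. This is exactly the zero mode $\lambda_{SC}=0$ the paper finds at $R=3m$. No family of Einstein metrics can produce a direction with nonzero second variation.

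Step 2 does not repair this. York's reduced action and heat capacity live in the Dirichlet ensemble (fixed $g^T$). They are evaluated on off-shell, conically singular configurations, and their nonzero value comes from the Gibbons--Hawking boundary term. These configurations are not smooth elements of $\mathcal{M}_B$, and nothing identifies $d^2I/dm^2$ with $-\frac12(\Delta_E h,h)_{L^2}$ for some $h\in TV_g$. The Dirichlet problem itself shows the identification fails: $C_R$ changes sign at $3m$, while the actual negative mode appears at $R\approx 2.89m$. Your remark that the $\MC$-condition accounts for the agreement restates the theorem rather than proving it.

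There is also a sign slip. With $S''_g(h)=-\frac12(F_gh,h)_{L^2}$, instability requires $S''(h)>0$. Your displayed $S''\propto -1/(R-3m)$ is negative for $R>3m$, which is the stable sign. The Euclidean action is $-S$ up to a positive factor, so $S''\neq d^2I/dm^2$.

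The fallback postpones the real work, using a trial field ``from Step 1'' that does not exist. What is needed is an explicit eigenvalue computation, which is what the paper does:
\begin{enumerate}
\item For static spherically symmetric $h\in TV_g$, the Neumann condition together with $P_gh=0$ forces $\tr h=0$.
\item The divergence constraint expresses $\psi$ through $\chi$.
\item The master ODE for $\chi$ has a regular singular point at $r=3m$, and one takes the regular Frobenius solution there.
\item Imposing $\psi(R)=-\chi(R)/3$ gives \eqref{bctaylor}, hence $\lambda_{SC}(3m)=0$ and $d\lambda_{SC}/dR|_{R=3m}=-100/(81m^3)<0$.
\end{enumerate}
This yields instability only for $R$ in some interval $(3m,L)$, which is what Theorem~\ref{thm:S} actually asserts. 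It does not give instability for all $R>3m$, and neither would your heat-capacity heuristic.

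A workable version of your idea is to use the Schwarzschild direction at $R=3m$ as an explicit zero mode and compute its first-order eigenvalue shift in $R$. That derivative still has to be computed, and doing so amounts to the paper's calculation.
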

\noindent A more precise formulation can be found in Theorem \ref{thm:S}.\\

The linearised boundary conditions on $T_g\mathcal{M}_B$ are related to what is known as \emph{Anderson boundary data}. In \cite{An1} and \cite{An2}), Michael Anderson argued for the use of the conformal class of the boundary and mean curvature as boundary conditions for problems related to Einstein metrics. His work was subsequently extended in \cite{AH}, and adapted for the Ricci flow in \cite{Gia}. Anderson's conditions also show up as a special case of the family of boundary conditions considered in \cite{LSW}.

\section{Preliminaries}

\subsection{Notation}

For $h\in S^2M$ a symmetric two-tensor, the notation $h^T\in S^2\partial M$ refers to the tangential part of $h$ at the boundary. A particular case is the induced metric on the boundary $g^T$.\\
  
We shall apply the notation of \cite{Bes} for the divergence operator $\delta_g=-\mathrm{div_g}$, as well as its formal adjoint $\delta^*_g$ and the geometers Laplacian $\Delta_g=\delta_gd$. For all three operators, we shall sometimes omit the subscript when there can be no question about the metric. The co-divergence, $\delta^*_g$, is the symmetrised covariant derivative and it satisfies the relation $\delta_g^*\alpha=\frac{1}{2}\mathcal{L}_{\alpha^\sharp}g$, for every one form $\alpha\in\Omega ^1(M)$. \\

Let $h\in S^2M$. We denote the trace-free part of $h$ as 
$$\overset{\circ}{h}=h-\frac{\tr h}{n}g.$$
In a few notable cases, we use $\overset{\circ}{\cdot }$ in conjunction with $\cdot^T$, by which we shall always mean
$$\overset{\circ}{h}\!\:^T=h^T-\frac{\tr_{g^T}h}{n-1}g^T.$$
A returning favourite is the Einstein tensor 
$$E_g:=\overset{\circ}{\Ric_g}=\Ric_g-\frac{\scal_g}{n} g,$$
the trace-less part of the Ricci tensor, and an integral part of the decomposition of the full curvature tensor:
$$R_g=\frac{1}{2n}\frac{\scal_g}{n-1}\; g\owedge g+\frac{1}{n-2}\; E_g\owedge g+W_g,$$
where $W_g$ is the (conformal) Weyl tensor and $\owedge$ is the Kulkarni--Nomizu product. Note the distinction between $E_g$ and the transverse tensor usually seen in Einstein's field equations $G_g=\Ric_g-\frac{1}{2}\scal_g\cdot g$.\\

Let $X$ be a vector field and $h$ a symmetric tensor, then we shall adhere to Obata's notation for the contraction
$$h\cdot X:=h(X,\cdot)\in \Omega^1(M).$$
We allow $X$ to be substituted by a one form $\alpha\in \Omega^1(M)$, by setting $h\cdot \alpha:=h\cdot \alpha^{\sharp}$.\\

Let $\nu$ be the outward pointing unit normal form along the boundary. We use the following convention for the second fundamental form and mean curvature of $g$
$$\SF_g:=\p{\delta_g^*\nu}^T\qquad \text{and}\qquad \MC_g:=\trgt \SF_g.$$
When calculating the first variation of $\SF_g$, it is beneficial to use the Leibniz property of the Lie derivative;
\begin{equation}\label{eq:SFvariation}\begin{split}
   \SF_g'(h)&= \frac{1}{2}\p{\Lie_{\nu^\sharp}g^T}'=\frac{1}{2}\Lie_{\nu^\sharp}h^T+\frac{1}{2}\Lie_{\p{\nu^\sharp}'}g^T\\
   &=\frac{1}{2}\Lie_{\nu^\sharp}h^T-\Lie_{(h\cdot\nu)^T}g^T-\frac{1}{4}h(\nu,\nu)\Lie_{\nu^\sharp}g^T\\ &=\frac{1}{2}\nabla_\nu h^T+\frac{1}{2}\SF_g\times h^T-\delta_{g^T}^*\p{h\cdot \nu}-\frac{1}{2}h\p{\nu,\nu}\SF_g,\end{split}
\end{equation}
where $h\cdot \nu=h(\nu^\sharp,\cdot)$, $\p{\SF\times h^T}_{\alpha\beta}=g^{\gamma\delta}\p{\SF_{\alpha \gamma}h_{\delta \beta}+h_{\alpha \gamma}\SF_{\delta\beta}}$, and we used $(\nu^\sharp)'=-\p{h\cdot \nu}^T-\frac{1}{2}h\p{\nu,\nu}\nu^\sharp$, the derivation of which may be found in \cite[Lemma 2.2]{RJJ}. Using the Leibniz property of the trace, we obtain a formula for the first variation of the mean curvature:
\begin{align}\label{MCvar}
\MC_g'(h)&=\tr_{g^T}\SF_g'(h)-\g{h^T}{\SF_g}=\frac{1}{2}\nu\p{\trgt {h}}+\delta_{g^T}\p{h\cdot \nu}-\frac{1}{2}h(\nu,\nu)\MC_g.
\end{align}

We let $\mathcal{M}$ denote the space of smooth Riemannian metrics on $M$. In a few cases, we shall need to consider the section of metrics with H\"older regularity $C^{k,\alpha}$, with $k\in \mathbb{N}_0$ and $\alpha\in (0,1)$, up to the boundary. When this is the case, we shall write $\mathcal{M}^{k,\alpha}$.\\

    Let $\Omega=\Omega^1(M)$ be the space of smooth one forms on $M$, then we define
    $$\Omega_0:=\tb{\omega\in \Omega\; :\; \omega|_{\partial M}\equiv 0}.$$
    The space of smooth Neumann functions, with vanishing mean, shall be denoted 
    $$C_{g,N}^\infty\p{M}:=\tb{f\in C^\infty(M)\;:\; \int_M f\;dV_g=0\;\;\&\;\; \nu\p{f}=0\;\; \text{on}\;\; \partial M}.$$
    We shall also employ the notation $C_{N}^\infty(M)$ and $C_g^\infty(M)$, when we drop the vanishing mean or Neumann condition, respectively. Analogously to the space of metrics, we shall use $\Omega^{k,\alpha}$ and $C^{k,\alpha}(M)$ to denote the spaces of H\"older regularity.\\
     
     By $\sigma_N(\Delta)$ we shall mean the discrete spectrum of the Laplacian acting on functions $\psi\in C^\infty_{g,N}(M)$. We stress that this implies $\sigma_N\p{\Delta}\subset (0,\infty)$, as the spectrum does does not include $0$.\\

Finally we recall the definition of the Einstein--Hilbert action:
\begin{definition} We define the Einstein--Hilbert action (or total scalar curvature) as the functional $S:\mathcal{M}\to \R$;
$$S(g):=\int_M\scal_g\; dV_g.$$    
\end{definition}

\subsection{Boundary Conditions}
We shall consider a coupled pair of boundary conditions, the first of which is a generalised Dirichlet condition;
\begin{equation}\label{BC1}h^T=\frac{1}{n-1}\trgt h\cdot g^T.\end{equation}
This should be interpreted as fixing the conformal class of the induced metric on the boundary. In itself, it will not be enough for a well-defined boundary value problem for the linearised Einstein equations. The solution is to amend the Dirichlet condition by a generalised Neumann condition, in the form of a variational equation for the mean curvature;
\begin{equation}\label{BC2geometric}
    2\MC_g'(h)+\frac{1}{n-1}\trgt h\;\MC_g=0.
\end{equation}
The two conditions are coupled in the sense that the degree of freedom introduced by allowing a arbitrary conformal factor $\varphi=\frac{1}{n-1}\trgt h$, is fixed by the second condition.\\

While \eqref{BC2geometric} is the most geometric representation of the Neumann boundary condition, it often arises from a boundary integral where the integrand is given by $\g{\delta h+d\tr h}\nu$. In fact, the difference between this and \eqref{BC2geometric} is precisely a boundary divergence term;
\begin{equation}\label{BC2nongeometric}
    \g{\delta_g h+d\mathrm{tr}_g{h}}{\nu}+\delta_{g^T}\p{h\cdot \nu}=2\MC_g'(h)+\g{h^T}{\SF_g}=2\MC_g'(h)+\varphi\MC_g.
\end{equation}
As we shall always assume the boundary to be closed, we have
$$\int_{\PS} \g{\delta_g h+d\mathrm{tr}_g{h}}{\nu}\; dA_g=\int_{\PS}2\MC'_g(h)+\varphi \MC_g\; dA_g.$$
The identity \eqref{BC2nongeometric} is proven as follows. Extending $\nu$ geodesically to a neighbourhood of $\partial M$, we can decompose the normal divergence as
\begin{align*}\g{\delta_g h}{\nu}&=\delta\p{h\cdot \nu}+\g{h}{\delta^*_g\nu}\\ &=\delta_{g^T}(h\cdot \nu)-h(\nu,\nu)\MC_g-\nu\p{h\p{\nu,\nu}}+\g{h^T}{\SF_g}.\end{align*}
Add to this the normal derivative of the trace, and we obtain
$$\g{\delta_g h}{\nu}+\nu(\mathrm{tr}_g h)=\delta_{g^T}\p{h\cdot \nu}-h(\nu,\nu)\MC+\nu\p{\trgt h}+\g{h^T}{\SF_g},$$
which, combined with \eqref{MCvar} and $\g{h^T}{\SF_g}=\varphi\MC_g$, gives the identity. A third and final version of the second boundary condition uses the Bianchi operator, $\beta_g h:=\delta_g\p{h-\frac{1}2\mathrm{tr}_g h\cdot g}$, 
\begin{equation*}
    2\MC_g'(h)+\varphi\MC_g=\g{2\beta_g h}{\nu}+\p{h\p{\nu,\nu}-\varphi}\MC_g+\nabla_\nu h(\nu,\nu).
\end{equation*}

Even though the boundary conditions are defined at a linearised level, they are in fact integrable in the sense that we can define a space of metrics whose tangent space at any point is the space of symmetric tensors satisfying the conditions. Let $\gamma$ be a background metric on $\PS$ and let $H:\PS\to \R$ be a chosen function. Define the space of boundary conformal metrics
    \begin{equation}\label{MB}\mathcal{M}_B:=\tb{g\in \mathcal{M}\; :\; g^T=e^{\psi}\gamma,\; \MC_g=e^{-\frac{\psi}{2}}H\;\; \text{at}\;\; \partial M}.\end{equation}
    It is safe to assume that the metrics in $\mathcal{M}_B$ are smooth, unless stated otherwise.\\ 
    
    A consequence of the choice of boundary conditions, is the fact that the metrics $g_1,g_2\in\mathcal{M}_B$ are conformally equivalent if and only if there exists a Neumann function $f\in C^\infty_N(M)$ such that $g_1=e^{f}g_2$. This has a couple of important implications: First, let $[g]\subset \mathcal{M}_B$ be the subspace of metrics conformal to $g$, then
    $$T_g[g]=C_N^\infty(M)\cdot g =\mathbb{R}\cdot g\oplus_{L^2}C^{\infty}_{g,N}(M)\cdot g. $$
    Secondly, having the elliptic Neumann boundary condition on conformal perturbations allow us to retain certain useful results from the theory of stability of closed Einstein manifolds. This is discussed in the following section, but is most evident in Theorem \ref{Escobar4.2}.

\subsection{Escobar's Eigenvalue Estimate}
A useful tool in the study of stability of positive Einstein metrics on a closed compact manifold is Obata's eigenvalue estimate \cite[Theorem 1]{Oba}:
\begin{equation}\label{Obata}\lambda_1(\Delta)\geq \frac{n\mu}{n-1},\end{equation}
where $\lambda_1(\Delta)> 0$ is the principal eigenvalue of the Laplacian and $\mu\geq 0$ is the Einstein constant, $\Ric_g=\mu g$. Equality in \eqref{Obata} is obtained if and only if $(M,g)$ is isometric to $\mathbb{S}^n$ with the usual round metric $g_{\mathbb{S}^n}$. \\

In the case of a manifold with boundary, \eqref{Obata} holds for the principal Neumann eigenvalue, but only if we assume the boundary is convex. In this case, equality holds if and only if $(M,g)$ is isometric to $\p{\mathbb{S}_+^n(r), g_{\mathbb{S}^n}}$ (where $r=\sqrt{\frac{n-1}{\mu}}$). We shall refer to this as Escobar's eigenvalue estimate (See \cite[Theorem 4.3]{Esc}). The argument is straightforward and mimics the original proof of \eqref{Obata}; Let $f\in C_{g,N}^\infty(M)$, then integration by parts yields
$$\norm{\Delta f}^2_{L^2}=\p{\nabla \Delta f,\nabla f}_{L^2}=\p{\Delta \nabla f,\nabla f}_{L^2}+\int_M\Ric_g(\nabla f,\nabla f)\; dV.$$
Performing another instance of integration by parts and a Ricci identity yields 
$$\p{\Delta \nabla f,\nabla f}_{L^2}=\Vert\nabla^2 f\Vert_{L^2}^2-\int_{\partial M}\nabla^2f\p{d f,\nu}\; dA=\Vert\nabla^2 f\Vert_{L^2}^2+\int_{\partial M}\SF\p{\nabla f,\nabla f}\; dA.$$
If $\Ric_g\geq \mu g$, as is the case for Einstein metrics, and $\partial M$ is convex, i.e. $\SF\geq 0$, we conclude that 
\begin{equation}\label{laphessineq}
\norm{\Delta f}_{L^2}^2-\Vert\nabla^2f\Vert_{L^2}^2\geq \mu \norm{\nabla f}_{L^2}^2.\end{equation}
If we let $f$ be the principal Neumann eigenfunction for the Laplacian, Cauchy--Schwarz and \eqref{laphessineq} yield
$$\frac{n-1}{n}\lambda_1^2\norm{f}_{L^2}^2=\frac{n-1}{n}\norm{\Delta f}_{L^2}^2\geq \norm{\Delta f}_{L^2}^2-\Vert\nabla^2f\Vert_{L^2}^2\geq \mu \norm{\nabla f}_{L^2}^2, $$
or
$$\lambda_1^2\geq \frac{n\mu}{n-1}\frac{\norm{\nabla f}_{L^2}^2}{\norm{f}_{L^2}^2}=\frac{n\mu}{n-1}\lambda_1.$$
Equality occurs if and only if 
$$\nabla^2f=-\frac{\mu f}{n-1} g.$$
As we shall see below in Theorem \ref{Escobar4.2}, the existence of such an $f$ implies that $(M,g)$ is isometric to a hemisphere with the round metric. \\

There are structurally simple examples of Einstein manifolds with non-convex boundary, for which \eqref{Obata} is violated. Two such cases are discussed in Examples \ref{example:spherical_strip} and \ref{example:spherical_cap}. Through these examples, we also see that it is equally possible to cook up examples of constant scalar curvature metrics where $\scal_g/(n-1)\in \sigma_N(\Delta)$. However, the assumption that this is not the case is generic enough that we may assume it for most of the following section. 

\section{Linearized Scalar Curvature}

Since we shall make excessive use of it, we shall shorten the notation for the linearised scalar curvature. Let $h\in S^2M$ and set
\begin{align*} P_gh:&=D_g\scal(h)=\Delta_g\trace_gh-\g{\Ric_g}{h}_g+\delta_g\delta_g h\\
P_g:&\;C^{\infty}(S^2M)\longrightarrow C^\infty(M).\end{align*}
Equally important is the formal adjoint of $P_g$,
\begin{align*} P_g^*\psi:&=\Delta_g\psi\cdot g-\psi\Ric_g+ \nabla_g^2\psi\\
P_g^*:&\;C^\infty(M)\longrightarrow C^\infty(S^2M).\end{align*}
Whenever the metric is clear from context, we shall simply write $P$ and $P^*$.

\subsection{Properties}

\begin{lemma}\label{properties} Let $(M,g)$ be a Riemannian manifold and $\psi$ a function, then
\begin{enumerate}
    \item $P_g\p{\psi g}=\g{P^*_g\psi}{g}$.
\end{enumerate}
If $g$ is of constant scalar curvature,
\begin{enumerate}[resume]
    \item $P_g\circ \delta^*_g=0$,
    \item $\delta_g \circ P^*_g=0$.
\end{enumerate}
If $g$ is Einstein,
\begin{enumerate}[resume]
    \item $P_g\circ \Delta_L=\Delta_g\circ P_g $,
    \item $\Delta_L\circ P^*_g=P^*_g\circ \Delta_g$,
\end{enumerate}
where $\Delta_Lh=\nabla^*\nabla h+\Ric_g\times h-2\overset{\circ}Rh$ is the Lichnerowicz Laplacian on $S^2M$.
\end{lemma}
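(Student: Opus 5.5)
Items 1 and 2–3 are pointwise or formal-adjoint identities, and I will prove them by direct computation from the definitions of $P_g$ and $P_g^*$. For item 1, I substitute $h=\psi g$ into $P_g$. Using $\tr_g(\psi g)=n\psi$, $\g{\Ric_g}{\psi g}=\psi\scal_g$ and $\delta_g(\psi g)=-d\psi$, so that $\delta_g\delta_g(\psi g)=-\delta_g d\psi=-\Delta_g\psi$, I get
$$P_g(\psi g)=(n-1)\Delta_g\psi-\psi\scal_g.$$
On the other side, $\g{P_g^*\psi}{g}=n\Delta_g\psi-\psi\scal_g+\tr_g\nabla^2\psi=(n-1)\Delta_g\psi-\psi\scal_g$, since $\tr\nabla^2\psi=-\Delta\psi$ in the geometer's convention. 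The two expressions agree.

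For item 2, I use naturality of scalar curvature. Since $\delta_g^*\alpha=\frac12\Lie_{\alpha^\sharp}g$, and $\scal$ is diffeomorphism equivariant, we have $P_g(\delta^*_g\alpha)=\frac12\Lie_{\alpha^\sharp}\scal_g=\frac12 d\scal_g(\alpha^\sharp)$. This vanishes when $\scal_g$ is constant.

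Item 3 is the formal adjoint of item 2. I will still verify it pointwise, because on a manifold with boundary the adjoint relation only holds up to boundary terms. The computation uses $\delta(\Delta\psi\, g)=-d\Delta\psi$ and $\delta(\psi\Ric)=-\Ric\cdot d\psi+\psi\,\delta\Ric$. By contracted Bianchi, $\delta\Ric=-\frac12 d\scal=0$. Finally, the Ricci identity gives $\delta\nabla^2\psi=d\Delta\psi-\Ric\cdot d\psi$ with the sign conventions of \cite{Bes}. Summing these three contributions, everything cancels. Equivalently, $\delta\circ P^*$ is the formal adjoint of $P\circ\delta^*=0$ as differential operators, so it vanishes identically. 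This is a local statement and is unaffected by the boundary.

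Items 4 and 5 are the main computational content. I will prove 5 first and obtain 4 as its formal adjoint; this is legitimate since both are identities of differential operators, and formal adjoints are computed locally. With $\Ric=\mu g$, I write $P^*\psi=(\Delta\psi-\mu\psi)g+\nabla^2\psi$. The Lichnerowicz Laplacian commutes with multiplication by $g$ on functions, $\Delta_L(fg)=(\Delta f)g$, because $\Ric\times g=2\mu g$ and $\overset{\circ}R g=\Ric=\mu g$, so the zeroth-order terms cancel. The key identity is then
$$\Delta_L\nabla^2\psi=\nabla^2\Delta\psi,$$
which holds on Einstein manifolds and is equivalent to $\Delta_L\delta^*=\delta^*\Delta_H$ on one-forms together with $d\Delta=\Delta_H d$. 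I will derive it by commuting covariant derivatives. Two facts make this work: $\nabla\Ric=0$, and the contracted second Bianchi identity $\delta R=-d^\nabla\Ric=0$, which kills the terms involving the divergence of the curvature. With these two identities the three pieces of $P^*\psi$ combine to $\Delta_L P^*\psi=P^*\Delta\psi$.

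This commutation $\Delta_L\nabla^2=\nabla^2\Delta$ is where I expect the main bookkeeping effort. The signs in $\Ric\times h$ and $\overset{\circ}R$ must be tracked consistently so that the curvature terms from the two Ricci identities cancel exactly against the zeroth-order part of $\Delta_L$; alternatively one can cite the standard result from \cite[1.174]{Bes}.
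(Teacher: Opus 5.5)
Your proposal is correct and follows the paper's proof essentially step for step on items 1--3 and 5: the direct trace comparison, naturality of $\scal$ under $\Lie_{\alpha^\sharp}$, the Ricci identity plus contracted Bianchi, and $\Delta_L(fg)=(\Delta f)g$ combined with $\Delta_L\delta^*=\delta^*\Delta_H$ and $\Delta_H d=d\Delta$. The only deviation is that you derive item 4 as the formal adjoint of item 5, whereas the paper checks it directly via $\tr\Delta_L=\Delta\tr$ and $\delta\Delta_L=\Delta_H\delta$; these are exactly the adjoints of the identities you use, so the two arguments are equivalent.
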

\begin{proof} (\textit{1.}) is a direct comparison 
$$P\p{\psi\cdot g}=(n-1)\Delta \psi-\scal\psi=\tr\p{\Delta\psi\cdot g-\psi\Ric+\nabla^2\psi}=\tr P^*\psi.$$
(\textit{2.}) follows from constancy of $\scal_g$ and
$$P\p{\delta^*\alpha}=\frac{1}{2}\g{d\scal_g}{\alpha}=\frac{1}{2}\mathcal{L}_{\alpha^\sharp}\scal_g.$$
(\textit{3.}) is a consequence of a Ricci identity;
$$\Delta\nabla \psi=\nabla\Delta \psi-\Ric\cdot \nabla \psi,$$
applied to 
$$\delta\p{\Delta\psi \cdot g-\psi\Ric +\nabla^2\psi}=-\nabla\Delta\psi +\Ric\cdot \nabla \psi-\psi\delta\Ric+\Delta\nabla \psi,$$
followed by the use of the contracted Bianchi identity, $\delta\Ric=-\frac{1}{2}d\scal=0$. \\

(\textit{4.}) and (\textit{5.}) uses the naturality of the Lichnerowicz Laplacian, $\Delta_L$, and the Hodge Laplacian on one-forms, $\Delta_H$;
$$\tr \Delta_L h=\Delta \tr h, \qquad \delta\delta\Delta_Lh=\delta\Delta_H\delta h=\Delta\delta \delta h,\qquad \Delta_L\p{\psi\cdot g}=\Delta\psi \cdot g,\qquad \Delta_L\p{\delta^*d\psi}=\delta^*\Delta_Hd\psi=\delta^*d\Delta\psi,$$
with the added knowledge that $\delta^*d\psi=\nabla^2\psi$.
\end{proof}

The operators $P_g$ and $P_g^*$ are, á priori, only formally adjoint. The following Green identity gives conditions for proper adjointness. 

\begin{proposition} Let $(M,g)$ be a compact Riemannian manifold with closed boundary, let $\psi \in C^\infty(M)$ and $h\in S^2M$. Then
     \begin{equation}\label{PP*Green}
        (P^*_g\psi,h)_{L^2}-\p{\psi,P_gh}_{L^2}=\int_{\partial M}\psi\p{2\MC_g'(h)+\big\langle h^T,\SF_g\big\rangle}-\trgt {h}\cdot \nu\p{\psi}\; dA.
        \end{equation}
    In particular, if $h\in T_g\mathcal{M}_B$ and $\psi\in C^{\infty}_N(M)$
    $$(P^*_g\psi,h)_{L^2}=\p{\psi,P_gh}_{L^2}.$$
\end{proposition}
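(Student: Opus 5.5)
We compute $(P^*\psi,h)_{L^2}$ term by term and integrate by parts to move all derivatives onto $h$, keeping track of every boundary term. Throughout we write $\nu$ for the outward unit normal. The sign conventions are $\delta=-\mathrm{div}$ and $\Delta=\delta d$. With these conventions, for a one-form $\alpha$ and a function $f$ we have $(df,\alpha)=(f,\delta\alpha)+\int_{\partial M} f\,\alpha(\nu)\,dA$. For a symmetric tensor $h$ we have $(\delta^*\alpha,h)=(\alpha,\delta h)+\int_{\partial M}h(\nu,\alpha^\sharp)\,dA$.

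\textbf{The three terms of $P^*\psi$.}
\begin{itemize}
\item[(a)] The term $\Delta\psi\cdot g$ gives $(\Delta\psi,\tr h)$. Green's identity turns this into
$$(\psi,\Delta\tr h)+\int_{\partial M}\big(\psi\,\nu(\tr h)-\tr h\,\nu(\psi)\big)\,dA.$$
\item[(b)] The term $-\psi\Ric$ pairs directly to $-(\psi,\g{\Ric}{h})$, with no boundary contribution.
\item[(c)] The Hessian term is $\nabla^2\psi=\delta^* d\psi$, so we integrate by parts twice:
$$(\delta^*d\psi,h)=(d\psi,\delta h)+\int_{\partial M}h(\nu,\nabla\psi)\,dA=(\psi,\delta\delta h)+\int_{\partial M}\big(\psi\g{\delta h}{\nu}+h(\nu,\nabla\psi)\big)\,dA.$$
\end{itemize}
The three interior terms add up to $(\psi,Ph)$. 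The boundary integrand is therefore
$$\psi\big(\g{\delta h}{\nu}+\nu(\tr h)\big)-\tr h\,\nu(\psi)+h(\nu,\nabla\psi).$$

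\textbf{Splitting the boundary integrand.} Decompose $\nabla\psi=\nu(\psi)\nu^\sharp+\nabla^T\psi$ along $\partial M$. This gives
$$h(\nu,\nabla\psi)=h(\nu,\nu)\nu(\psi)+\g{(h\cdot\nu)^T}{d^T\psi}.$$
Since $\partial M$ is closed, the tangential piece integrates by parts on the boundary to $\int_{\partial M}\psi\,\delta_{g^T}(h\cdot\nu)\,dA$, with no further boundary terms. Also $\tr h=\trgt h+h(\nu,\nu)$, so the two $h(\nu,\nu)\nu(\psi)$ contributions cancel. What remains is $-\trgt h\,\nu(\psi)$ together with
$$\psi\big(\g{\delta h+d\tr h}{\nu}+\delta_{g^T}(h\cdot\nu)\big).$$
By the identity \eqref{BC2nongeometric}, proven above and valid for arbitrary $h$ in its first equality, this last expression equals $\psi\big(2\MC'_g(h)+\g{h^T}{\SF_g}\big)$. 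This gives \eqref{PP*Green}.

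\textbf{The special case.} Suppose $h\in T_g\mathcal{M}_B$, so $h$ satisfies \eqref{BC1} and \eqref{BC2geometric}, and $\psi\in C^\infty_N(M)$.
\begin{itemize}
\item Condition \eqref{BC1} gives $\g{h^T}{\SF_g}=\varphi\MC_g$ with $\varphi=\trgt h/(n-1)$.
\item Condition \eqref{BC2geometric} then makes $2\MC'_g(h)+\varphi\MC_g$ vanish, so the first boundary term is zero.
\item The Neumann condition $\nu(\psi)=0$ kills the second boundary term.
\end{itemize}
Hence $(P^*\psi,h)_{L^2}=(\psi,Ph)_{L^2}$.

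The main difficulty is bookkeeping of signs and conventions: the sign of $\delta$ under integration by parts, and making sure the normal–normal components cancel exactly. Once the tangential divergence is recognized, matching with \eqref{BC2nongeometric} is immediate.
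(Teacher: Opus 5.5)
Your proof is correct and follows the paper's argument: two integrations by parts produce $(\psi,P_gh)_{L^2}$ plus boundary terms, $h(\nu,\nabla\psi)$ is split into normal and tangential parts with the tangential part integrated by parts over the closed boundary, and the result comes from the first equality of \eqref{BC2nongeometric}, which holds for all $h$. Your handling of the normal--normal term, where $h(\nu,\nu)\,\nu(\psi)$ combines with $-\tr h\,\nu(\psi)$ to give $-\trgt h\,\nu(\psi)$, is the correct form of a step that the paper's proof writes with a slip ($\psi\, h(\nu,\nu)$ in place of $\nu(\psi)\, h(\nu,\nu)$).
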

\begin{proof}
    Performing straightforward integration by parts twice, we obtain
    \begin{align*}\p{P^*\psi,h}_{L^2}=& \int_M \g{\Delta\psi \cdot g-\psi \Ric_g+\nabla^2\psi}{h}\; dV\\=&\int_M\g{\nabla\psi}{\nabla\tr h+\delta h}-\psi\g{\Ric_g}{h} \; dV+\int_{\partial M}h\p{\nabla \psi ,\nu}-\tr h\nu(\psi)\; dA\\ =& \int_M \psi\p{\Delta \tr h-\g{\Ric_g}{h}+\delta\delta h}\; dV+\int_{\partial M}\psi\p{\g{\delta h}{\nu}+\nu\p{\tr h}}+h\p{\nabla \psi,\nu}-\tr h\; \nu\p{\psi}\; dA.\end{align*}
    Since the boundary is closed, integration by parts on the boundary yields 
    \begin{align*}\int_{\partial M}h\p{\nabla \psi,\nu}\; dA=&\int_{\partial M}h\p{\nabla^T\psi,\nu}+\psi h(\nu,\nu)\; dA\\=&\int_{\partial M}\psi\p{\delta_{g^T}\p{h\cdot \nu}+h(\nu,\nu)}\; dA.\end{align*}
    The Green formula now follows from equation \eqref{BC2nongeometric}. 
\end{proof}

The round sphere plays an important role in the study of closed compact Einstein manifolds. When a boundary is present, the analogous Einstein manifold is the hemisphere $\mathbb{S}^n_+$ with the standard round metric $g_{\mathbb{S}^n}$. This was already seen when we discussed Escobar's eigenvalue estimate, but is most clearly seen in the rigidity statement of the following theorem.

\begin{theorem}[\cite{Esc}, Theorem 4.2] \label{Escobar4.2} Let $(M,g)$ be a compact Einstein manifold with boundary. Then 
$$\ker P^*_g|_{C_{g,N}^\infty (M)}\neq \tb{0} $$
if and only if $(M,g)$ is isometric to $\p{\mathbb{S}^n_+(r),g_{\mathbb{S}^n}}$, with $r=\sqrt{\frac{n(n-1)}{\scal_g}}$.
    
\end{theorem}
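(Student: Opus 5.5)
We reduce the condition $P^*_g\psi=0$ to an Obata-type Hessian equation and then apply the rigidity argument for the Neumann problem. Write $\Ric_g=\mu g$ with $\scal_g=n\mu$. Suppose $\psi\in C^\infty_{g,N}(M)$ is nonzero with $P^*_g\psi=\Delta\psi\cdot g-\mu\psi g+\nabla^2\psi=0$. Taking the trace, as in part (\textit{1.}) of Lemma \ref{properties}, gives $(n-1)\Delta\psi-n\mu\psi=0$, so $\Delta\psi=\frac{n\mu}{n-1}\psi$. Substituting back yields
$$\nabla^2\psi=\Big(\mu-\frac{n\mu}{n-1}\Big)\psi\, g=-\frac{\mu}{n-1}\psi\, g.$$
If $\mu\le 0$, pair $\Delta\psi=\frac{n\mu}{n-1}\psi$ with $\psi$ and integrate by parts using the Neumann condition. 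This gives $\|\nabla\psi\|^2_{L^2}=\frac{n\mu}{n-1}\|\psi\|^2_{L^2}\le 0$, so $\psi$ is constant. Zero mean then forces $\psi=0$. Hence $\mu>0$, and we set $r=\sqrt{(n-1)/\mu}=\sqrt{n(n-1)/\scal_g}$. For the converse, on the round hemisphere $\mathbb{S}^n_+(r)$ the height function $\psi=\cos(d(p,\cdot)/r)$, with $p$ the pole, satisfies $\nabla^2\psi=-\psi g/r^2$. Its gradient is tangent to the equator, so $\nu(\psi)=0$, and its mean is zero by the reflection symmetry of the hemisphere about the pole's latitude. So $P^*_g\psi=0$ with $\psi\in C^\infty_{g,N}$.

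For the forward direction we now have $\nabla^2\psi=-\frac{\psi}{r^2}g$, $\psi\not\equiv 0$, and $\nu(\psi)=0$ on $\partial M$. Along any unit-speed geodesic $\gamma$, $f=\psi\circ\gamma$ solves $f''=-f/r^2$. Moreover $|\nabla\psi|^2+\psi^2/r^2$ is constant, since its differential is $2\nabla^2\psi(\nabla\psi,\cdot)+\frac{2}{r^2}\psi\, d\psi=0$. Normalise this constant to $1$. Let $p$ be a maximum point of $\psi$ on $M$; then $\psi(p)=\max\psi>0$, because the mean of $\psi$ is zero and $\psi\not\equiv 0$.

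The main step is to show that $\psi(p)=r$, i.e.\ that $p$ is a genuine critical point, and then to build the isometry via exponential coordinates. If $p$ is interior, $d\psi(p)=0$ gives $\psi(p)=r$. If $p\in\partial M$, the Neumann condition together with maximality along $\partial M$ again gives $d\psi(p)=0$. On geodesics from $p$ we then get $\psi=r\cos(s/r)$, where $s$ is the distance. Writing the metric in polar coordinates, the Hessian equation forces $g=ds^2+r^2\sin^2(s/r)\,g_{S^{n-1}}$ wherever geodesics from $p$ stay in $M$, exactly as in Obata's proof.

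The obstacle is the boundary. We need $M$ to be precisely the region $\{\psi\ge 0\}$, i.e.\ $s\le \pi r/2$, with boundary the totally geodesic equator, and we must handle the case where the boundary cuts geodesics from $p$ early or $p$ lies on $\partial M$. I would use the Neumann condition as follows. At a boundary point where $\psi\neq 0$, $\nabla\psi$ is tangent to $\partial M$, so $\partial M$ is a union of integral curves of level-set geometry. On the model, $\nabla\psi$ is radial from $p$, and tangency to the boundary at a point with $\psi\neq 0$ is only possible along radial geodesics. Combined with compactness and connectedness, this forces $\partial M\subset\{\psi=0\}$, the equatorial sphere. The case $p\in\partial M$ is excluded because near $p$ the boundary would have to be a union of radial directions, which is impossible for a hypersurface through the center unless $\psi(p)=0$. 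Hence $(M,g)$ is the hemisphere of radius $r$.
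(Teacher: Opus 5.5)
Your reduction is correct. Tracing $P^*_g\psi=0$ gives $\Delta\psi=\frac{n\mu}{n-1}\psi$ and $\nabla^2\psi=-\frac{\mu}{n-1}\psi\,g$, and the case $\mu\le 0$ is ruled out properly. A maximum point $p$ of $\psi$, interior or on $\partial M$, is indeed a critical point with $\psi(p)=r$.

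\textbf{The gap is the boundary step: the configuration you aim for contradicts the Neumann hypothesis.} With your normalisation, $|\nabla\psi|=1$ on $\{\psi=0\}$ and $\nabla\psi$ is normal to that level set. So if $\partial M\subset\{\psi=0\}$, then $\nu(\psi)=\pm1$. The picture ``$M=\{\psi\ge 0\}$, $p$ interior, $\partial M$ the equator'' is the \emph{Dirichlet} picture. On $\mathbb{S}^n_+(r)=\{x_{n+1}\ge 0\}$ it is realised by $x_{n+1}$, which is neither Neumann nor of zero mean.

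The actual kernel of $P^*_g$ on $C^\infty_{g,N}$ consists of the restrictions of $a\cdot x$ with $a_{n+1}=0$; see the Remark after the theorem. Their maxima $ra/|a|$ lie \emph{on} $\partial M$, and their zero sets meet $\partial M$ orthogonally. So your claim that $p\in\partial M$ is excluded is exactly backwards. The paper does not reprove the theorem but cites Escobar, and it records precisely that his proof shows these functions attain their maximum and minimum on $\partial M$.

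Your justification for excluding $p\in\partial M$ is also false. The totally geodesic sphere $\exp_p(T_p\partial M)$ is a smooth hypersurface through $p$ that is a union of radial geodesics from $p$, and it is exactly the boundary of the model. Your converse shows the same tension. $\cos(d(p,\cdot)/r)$ is Neumann with zero mean only if $p$ lies on the equator; for $p$ the centre it equals $x_{n+1}/r$, with $\nu(\psi)=-1/r$ and positive mean. With $p$ on the equator, its maximum lies on $\partial M$.

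What is missing is the correct location of the extremum and the rigidity it produces.

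\begin{enumerate}
\item \emph{The maximum lies on $\partial M$.} Since $\partial M$ is closed, $\psi|_{\partial M}$ has a maximum point $q$. The Neumann condition gives $d\psi(q)=0$, hence $\psi(q)=\pm r$. The value $-r$ is impossible: it would force $\psi\equiv -r$ on $\partial M$, making $\partial M$ a hypersurface of non-degenerate critical points (Hessian $g/r$). So $\max_M\psi=r$ is attained on $\partial M$, and you should take $p$ there.
\item \emph{The boundary is totally geodesic.} Your own observation is the right one, read correctly. $\nabla\psi$ is tangent to $\partial M$ everywhere, and its integral curves are pregeodesics, since $\nabla_{\nabla\psi}\nabla\psi=-\frac{\psi}{r^2}\nabla\psi$. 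Hence near $p$ the boundary is a cone of geodesics from $p$. This gives not a contradiction but $\partial M=\exp_p(T_p\partial M)$ locally, a totally geodesic sphere.
\item \emph{Identification of $M$.} A continuation argument along these geodesics up to the antipodal minimum, in the warped coordinates $ds^2+r^2\sin^2(s/r)\,g_{\mathbb{S}^{n-1}}$ around $p$, identifies $M$ with one side of a great sphere through $p$.
\end{enumerate}

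Alternatively, as the Remark suggests, restrict to the boundary. Because $\nu(\psi)=0$, the function $\psi|_{\partial M}$ satisfies $\nabla^2_{g^T}\psi=-\frac{\psi}{r^2}g^T$, so Obata's theorem applies on $\partial M$. Differentiating $\langle\nabla\psi,\nu\rangle=0$ along $\partial M$ then shows that $\nabla^T\psi$ lies in the kernel of the second fundamental form.
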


\begin{remark} In case $(M,g)=\p{\mathbb{S}^n_+(r),g_{\mathbb{S}^n}}$, where $\mathbb{S}^n_+(r)$ is identified with 
$$\tb{x=(x_1,\ldots ,x_{n+1})\in \R^{n+1}:\abs{x}=r }\cap\tb{x_{n+1}\geq 0}, $$
the kernel of $P^*_g|_{C_{g,N}^\infty}$ is the $n$-dimensional space spanned by the usual conformal vector fields on the (full) sphere, excluding the field generated by the coordinate function $x_{n+1}$. To see why this is, suppose $\psi\in C_{g,N}^\infty$ satisfies 
$$0=P^*_g\psi=\p{\Delta\psi-(n-1)\kappa\psi}\cdot g+\nabla^2_g\psi,$$
where $\kappa=r^{-2}$ is the sectional curvature. Since $\psi$ is Neumann, $P^*_g$ restricts to the corresponding operator on $\partial M=\mathbb{S}^{n-1}$:
$$0=\p{P^*_g\psi}^T=P_{g^T}^*\psi=\p{\Delta_{g^T}\psi-(n-2)\kappa\psi}\cdot g^T+\nabla^2_{g^T}\psi.$$
It is well known that the kernel of $P^*_g$ on the $(n-1)$-sphere is spanned by the $n$ conformal gradient vector fields generated by the coordinate fields in $\R^n\simeq\tb{x_{n+1}=0}$, see \cite{Oba}. Each of these fields can be realised as the restriction of the corresponding field of $\mathbb{S}^n_+$ in $\R^{n+1}$ (fixing the $(n+1)$'st coordinate). Furthermore, if $\psi_1,\psi_2\in \ker P^*|_{C^{\infty}_{g,N}}$ restricts to the same function on $\partial M$, then $\overline{\psi}=\psi_1-\psi_2\in\ker P^*|_{C^{\infty}_{g,N}}$. However, as Escobar shows in the proof of the theorem above, such functions must take their minimum and maximum values on $\partial M$. Since $\overline{\psi}|_{\PS}=0$, it must be trivial. We have thereby shown that there is a one-to-one correspondence between the $n$-dimensional kernel of $P^*_{g^T}$ and $\ker P^*_g|_{C^{\infty}_{g,N}}$.
\end{remark}

\subsection{A Slice Theorem}

In this section we seek a decomposition of the space of deformations of a metric $g\in \mathcal{M}_B$, akin to the one obtained in \cite{Koi} for a closed manifold. We have already mentioned that the space of Neumann multiples of $g$ serves as the tangent space for the subspace $[g]\subset \mathcal{M}_B$. The following proposition establishes a decomposition of the entire tangent space $T_g\mathcal{M}_B$, in the case that $g$ has constant scalar curvature, $\scal_g\equiv c$. From now on, we shall refer to such metrics as CSC.

\begin{proposition}\label{prop:split}
   \begin{enumerate}
       \item  Let $\p{M,g}$ be a CSC metric on a compact manifold with boundary, such that $\frac{\scal_g}{n-1}\notin \sigma_N(\Delta)$, then
       \begin{equation}\label{CSCsplitting}T_g\mathcal{M}_{B}=C^{\infty}_{N}(M)\cdot g\oplus \ker_g \p{\Delta_g\circ P_g}=C_{g,N}^\infty(M)\cdot g\oplus \ker\p{\Delta_g\circ P_g}.\end{equation}
    \item If, additionally, $g$ is Einstein, then $\ker_g\p{\Delta_g\circ P_g}=\ker_gP_g$ and
    \begin{equation}\label{Esplitting} T_g\mathcal{M}_B=C^{\infty}_{N}(M)\cdot g\oplus \ker_g P_g=\R\cdot g\oplus C_{g,N}^\infty(M)\cdot g\oplus \ker_g P_g.\end{equation}  
   \end{enumerate}
   In all cases $\ker_g$ denotes the kernel in the subspace of tensors with zero total trace.
\end{proposition}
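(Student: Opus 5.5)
The plan is to decompose $h\in T_g\mathcal{M}_B$ by solving for a conformal factor. Given $h$, look for $f\in C^\infty_N(M)$ with $h-fg\in\ker(\Delta_g\circ P_g)$. We have
$$P_g(fg)=(n-1)\Delta f-\scal_g f,$$
so this amounts to
$$\Delta\big((n-1)\Delta f-\scal_g f\big)=\Delta P_g h.$$
We therefore want to invert the operator $L f:=(n-1)\Delta f-\scal_g f=(n-1)\big(\Delta-\tfrac{\scal_g}{n-1}\big)f$ on Neumann functions. Since $\tfrac{\scal_g}{n-1}\notin\sigma_N(\Delta)$, the operator $\Delta-\tfrac{\scal_g}{n-1}$ is injective on $C^\infty_{g,N}(M)$.

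\emph{Handling the constants.} On constants, $L$ acts as multiplication by $-\scal_g$. I will treat the case $\scal_g\neq0$ first, where $L$ is an isomorphism of $C^\infty_N(M)$ by Neumann elliptic theory and Fredholm alternative. The case $\scal_g=0$ needs separate bookkeeping with the constant mode and the trace-normalization. The cleanest route is to solve modulo constants: apply $\Delta$ and note that $\Delta$ kills constants. Then solve $\Delta u=\Delta P_g h$ with $u$ a Neumann function, where $u=Lf$. The difficulty is that $P_g h$ need not satisfy a Neumann condition. Instead I will simply solve $Lf=P_gh-c$ for any constant $c$, giving $\Delta P_g(h-fg)=0$ automatically. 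The freedom in $c$ and in the constant part of $f$ is then fixed by the normalizations.

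\emph{Zero total trace and Neumann conditions.} The constant multiple of $g$ is chosen to make $\int\tr(h-fg)\,dV=0$; this explains the $\ker_g$ in the first form of \eqref{CSCsplitting}. In the second form, one restricts $f$ to zero-mean Neumann functions and drops the trace condition. For membership in the tangent space, note that $fg\in T_g\mathcal{M}_B$ exactly when $f$ is Neumann: conditions \eqref{BC1} and \eqref{BC2geometric} hold for $fg$ precisely when $\nu(f)=0$, by \eqref{MCvar}. Hence $h-fg\in T_g\mathcal{M}_B$.

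\emph{Directness.} Suppose $fg\in\ker(\Delta P_g)$ with $f\in C^\infty_{g,N}(M)$. Then $Lf$ is harmonic. I will argue it is constant; I expect this to be the main obstacle, since a harmonic function need not be constant without boundary information. I will try to show that $\nu(Lf)=(n-1)\nu(\Delta f)$ vanishes, or else use the Green identity \eqref{PP*Green} and Lemma \ref{properties} to pair $Lf$ against itself. Granting that $Lf$ is constant, $f$ is constant by the spectral hypothesis, so $f=0$ by zero mean. For the version with $\R\cdot g$ and $\ker_g$, zero total trace kills the constant.

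\emph{Einstein case.} Assume now $\Ric=\mu g$ and $k\in\ker_g(\Delta P_g)$, and set $u=P_gk$, which is harmonic. The Green formula \eqref{PP*Green} with $\psi$ Neumann gives $(P^*\psi,k)=(\psi,u)$. Take $\psi$ to solve $P_g(\psi g)=\tr P^*\psi=(n-1)\Delta\psi-n\mu\psi=u$, adjusted by constants. Then use Lemma \ref{properties}(4,5) to relate $\Delta P^*\psi$ to $P^*\Delta\psi$, and Lemma \ref{properties}(3) to integrate $\|u\|^2$ to zero. This yields $u=0$ together with the boundary-term issue noted above. This step, which upgrades harmonic to zero using the Einstein identities and the Neumann Green formula, is where I expect the real work to lie.
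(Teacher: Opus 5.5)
Your construction of the conformal factor is the paper's: solve $(n-1)\Delta f-\scal_g f=P_gh$ under the Neumann condition, or $P_gh-c$ with $c=\frac{1}{\abs{M}}\int_M P_gh\,dV$ when $\scal_g=0$ (not an arbitrary $c$). It correctly shows that $T_g\mathcal{M}_B$ is the \emph{sum} of the two factors in \eqref{CSCsplitting}.

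The gap is in the two steps you single out as the crux. These are showing that $Lf$ harmonic forces $Lf$ constant, and that $u=P_gk$ harmonic forces $u=0$. Neither can be closed, because both claims are false in general. Take a nonconstant harmonic $u$ with $\int_M u\,dV=0$, for instance the harmonic extension of nonconstant boundary data minus its mean. Let $f$ be the Neumann solution of $(n-1)\Delta f-\scal_g f=u$, which exists when $\scal_g\neq0$ and $\scal_g/(n-1)\notin\sigma_N(\Delta)$. Integrating gives $\scal_g\int_M f\,dV=0$, so $0\neq f\in C^\infty_{g,N}(M)$. Then $fg\in T_g\mathcal{M}_B$ has zero total trace and $\Delta_gP_g(fg)=\Delta_g u=0$, while $P_g(fg)=u\neq0$. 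Thus $fg$ is a nonzero element of $C^\infty_{g,N}(M)\cdot g\cap\ker_g(\Delta_g\circ P_g)$, so the sum in part 1 is not direct. For Einstein $g$ the same $fg$ lies in $\ker_g(\Delta_g\circ P_g)\setminus\ker_gP_g$; when $\scal_g=0$, use the zero-mean solution of $(n-1)\Delta f=u$ instead. No use of \eqref{PP*Green} or Lemma \ref{properties} can repair this, since the Neumann condition on $f$ says nothing about $\nu(\Delta f)$. The paper itself remarks after Theorem \ref{MB_cor} that $\Delta_gP_gh=0$ does not force $P_gh$ to be constant under these boundary conditions, and its proof of part 1 only constructs the decomposition without arguing directness.

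For the Einstein case, the paper avoids $\ker(\Delta_g\circ P_g)$ altogether and lands in $\ker_gP_g$ directly. The Green identity \eqref{PP*Green} with $\psi\equiv1$ gives $\int_MP_gh\,dV=-(\Ric_g,h)_{L^2}=-\mu\int_M\tr h\,dV$. After subtracting a multiple of $g$ so that $\int_M\tr h\,dV=0$, the Neumann solution of $(n-1)\big(\Delta-\tfrac{n\mu}{n-1}\big)\psi=P_gh$ has zero mean when $\mu\neq0$. Hence $c=0$ in $P_gk=-c\scal_g$, and $k=h-\psi g\in\ker_gP_g$. When $\mu=0$, one has $P_gk=-c\scal_g=0$ regardless of $c$.

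Directness of \eqref{Esplitting} is then elementary. The total trace kills the $\R\cdot g$ component, and $P_g(\psi g)=(n-1)\big(\Delta-\tfrac{n\mu}{n-1}\big)\psi=0$ with $\psi\in C^\infty_{g,N}(M)$ forces $\psi=0$ by the spectral hypothesis. Equivalently, your unnormalised solution $f\in C^\infty_N(M)$ of $Lf=P_gh$ already gives $h-fg\in\ker_gP_g$ when $\mu\neq0$, since integrating yields $n\int_M f\,dV=\int_M\tr h\,dV$. For any CSC metric with $\scal_g\neq0$, that same solution gives the genuinely direct splitting $T_g\mathcal{M}_B=C^\infty_N(M)\cdot g\oplus\ker P_g$.

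So drop the attempt to upgrade ``harmonic'' to ``constant'' or ``zero''. What can actually be proved is the sum in part 1 and the direct splitting \eqref{Esplitting}.
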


\begin{proof}
    Let $g$ be CSC with $\scal_g\neq 0$, and take an arbitrary $h\in T_g\mathcal{M}_B$. Consider the elliptic boundary value problem
    \begin{equation}\label{SplittingBVP}\begin{cases}
        \p{\Delta -\frac{\scal_g}{n-1}}\psi=\frac{1}{n-1}Ph&\text{in}\; M\\
        \nu\p{\psi}=0 &\text{on}\; \partial M
    \end{cases}\end{equation}
    Since $\scal_g/(n-1)\notin \sigma_N(\Delta)$, the Fredholm alternative ensures a unique solution $\psi\in C_N^\infty(M)$ of \eqref{SplittingBVP}. However, $\psi$ will not have vanishing mean unless $\p{\Ric_g,h}_{L^2}=0$. Setting $c:=\frac{1}{\abs{M}}\int_M\psi\;dV$, where $\abs{M}$ denotes the volume of $M$, we construct a unique normalised solution 
    $$\widetilde{\psi}:=\psi-c \in C_{g,N}^\infty(M).$$
    By defining $k:=h-\widetilde{\psi}\cdot g$, we have obtained a decomposition $h=\widetilde{\psi}\cdot g+k$, where 
    \begin{equation}\label{eq:Pk}\begin{split}Pk&=Ph-P\p{(\psi-c)\cdot g}\\ &=Ph-(n-1)\p{\Delta -\frac{\scal_g}{n-1}}(\psi-c)\\ &=-c\scal_g.\end{split}\end{equation}
    As the right hand side is constant, $k$ is certainly in the kernel of $\Delta \circ P$. Since both $h$ and $\widetilde{\psi}\cdot g$ are in $T_g\mathcal{M}_B$, so is $k$.\\

    In the special case where $\scal_g=0$, \eqref{SplittingBVP} has a solution if and only if $\p{\Ric_g,h}_{L^2}=0$, so we will have to modify the problem slightly:
    \begin{equation}\label{SplittingBVP2}\begin{cases}
        \Delta\psi=\frac{1}{n-1}\p{Ph+\frac{1}{\abs{M}}\p{\Ric_g,h}_{L^2}}&\text{in}\; M\\
         \nu\p{\psi}=0 &\text{on}\; \partial M.
    \end{cases}\end{equation}
    As before, we have a unique normalised solution $\widetilde{\psi}$ of \eqref{SplittingBVP2}, and 
    \begin{align*}\Delta \p{P\p{h-\widetilde{\psi}\cdot g}}&=\Delta\p{Ph-(n-1)\Delta\psi}\\&=-\Delta\p{\frac{1}{\abs{M}}(\Ric_g,h)_{L^2}}=0.\end{align*}

    When $g$ is Einstein, we shall assume $\int_M\tr h\; dV=0$. This can be made rigorous by subtracting $\big(\frac{1}{n\abs{M}}\int_M\tr h\; dV\big)g$, which can be added to the $C_{g,N}^\infty(M)\cdot g$ component of the splitting \eqref{Esplitting}. If the Einstein constant is non-zero, $\mu \neq 0$, the unique solution $\psi$ of \eqref{SplittingBVP} will have zero mean. This means $c=0$ in \eqref{eq:Pk} and $k\in \ker P$. Furthermore, as both constituents of $k$ have vanishing total trace, we indeed have $k\in \ker_gP$.\\
    
    If $g$ is Ricci-flat, the boundary value problems \eqref{SplittingBVP} and \eqref{SplittingBVP2} coincide. However, the unique solution will generally not have vanishing mean. Normalising does not pose an issue this time, as \eqref{eq:Pk} vanishes even when $c\neq 0$. 
\end{proof}

The decomposition \eqref{CSCsplitting} shows that it is sufficient that
$$S'_g(h)=0\qquad \forall h\in \ker_g\p{\Delta_g\circ P_g},$$
for a smooth CSC metric to be Einstein. This mirrors the situation in which the manifold is closed. The following remark explores a way in which the closed case is different.

\begin{remark}\label{noboundsplitting}
    Let $(M,g)$ be a closed compact Einstein manifold (without boundary), then it holds under no additional assumptions, that
    \begin{equation}\label{P*Pdecomp}T_g\mathcal{M}=\Imag P^*|_{C^\infty(M)}\oplus_{L^2} \ker_g P.\end{equation}
    This may even be refined to 
    \begin{equation}\label{closed_splitting}T_g\mathcal{M}= \Imag P^*|_{C^\infty(M)}\oplus TT_g\oplus \Imag \delta^*,\end{equation}
    which uses the standard splitting 
    $$S^2M=\ker \delta\oplus\Imag \delta^*,$$
    the inclusions $\Imag P^*\subset \ker\delta$ and $\Imag \delta^*\subset \ker_g P$, as well as 
    \begin{equation*}\ker_g P\cap \ker \delta \subset \ker_g \p{\p{\Delta -\mu}\tr}=\ker \tr.\end{equation*}
    If $\psi\in C^\infty(M)$, then 
    $$P^*\psi=\p{\Delta-\mu}\psi\cdot g+\nabla^2\psi\in C^\infty(M)\cdot g+\Imag \delta^*,$$
    which means \eqref{closed_splitting} can also be written as 
    $$T_g\mathcal{M}= \p{C^\infty(M)\cdot g+\Imag \delta^*}\oplus TT_g.$$
    The sum is direct if and only if $\ker P^*|_{C^\infty_g(M)}=\tb{0}$, which is the case if and only if $\p{M,g}$ is not isometric to the round sphere.\\

    While much of this remain true in the presence of a boundary, given an appropriate choice of boundary conditions, there are enough differences that we can not achieve a similar decomposition. The problem is mostly with how the conditions interact with the different operators. For example, though $\Imag P^*|_{C^\infty_N(M)}$ and $\ker_gP\subset T_g\mathcal{M}_B$ are $L^2$-orthogonal, there is no chance that \eqref{P*Pdecomp} holds, as $P^*$ simply does not map into $T_g\mathcal{M}_B$. Even more troubling is the fact that the equality $\ker_g\p{(\Delta-\mu)\tr}=\ker\tr$ is not true in $T_g\mathcal{M}_B$. As such, there is no natural way to obtain $TT_g$ as a factor in a decomposition of $T_g\mathcal{M}_B$.
\end{remark}

Now follows a boundary conditioned version of a well-known slice theorem by Koiso \cite{Koi}. See also \cite[Theorem 4.44]{Bes}.

\begin{theorem}\label{MB_cor} 
    Consider the space $\mathcal{M}^{k,\alpha}_B$ from \eqref{MB} and define the subspace of constant scalar curvature (CSC) metrics;
    $$\mathcal{C}_B:=\tb{g\in \mathcal{M}^{k,\alpha}_B\;\; : \;\;\scal_g \text{ is constant}}.$$
    Suppose $g\in \mathcal{C}_B$ is Einstein and $\scal_g/\p{n-1}\notin \sigma_N(\Delta)$. Then $\mathcal{C}_B$ is a submanifold of $\mathcal{M}_B^{k,\alpha}$ in a neighbourhood of $g$, and 
    $$T_g\mathcal{C}_B=\ker \p{\Delta_g\circ P_g}=\R \cdot g\oplus \ker_g\p{\Delta_g\circ P_g},$$
    and the map 
    $$(f,\widetilde{g})\mapsto (1+f)\cdot \widetilde{g},\qquad \qquad C^{k,\alpha}_{g,N}(M)\times \mathcal{C}_B\to \mathcal{M}_B^{k,\alpha}$$
    is a local diffeomorphism from a neighbourhood of $(0,g)$ to a neighbourhood of $g$.
\end{theorem}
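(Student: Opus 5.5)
We follow Koiso's argument in \cite{Koi}, adapted to the boundary conditions. The key tool is the implicit function theorem applied to a nonlinear map measuring the failure of a metric to be CSC.

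\textbf{The map.} Consider
$$\Phi:\mathcal{M}^{k,\alpha}_B\to C^{k-2,\alpha}_{g}(M),\qquad \Phi(\widetilde g):=\scal_{\widetilde g}-\frac{1}{\abs{M}_{g}}\int_M\scal_{\widetilde g}\;dV_{g},$$
projecting the scalar curvature onto functions of $g$-mean zero. Then $\mathcal{C}_B=\Phi^{-1}(0)$. First we note that $\mathcal{M}^{k,\alpha}_B$ is a Banach submanifold with tangent space $T_g\mathcal{M}_B$. We check this by writing metrics in a collar and verifying that the boundary constraints $g^T=e^\psi\gamma$, $\MC_g=e^{-\psi/2}H$ have surjective linearisation. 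The map $h\mapsto(\overset{\circ}{h}\!\:^T,\ 2\MC'(h)+\varphi\MC)$ can be hit by prescribing $h^T$ and $\nabla_\nu h^T$ freely.

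\textbf{Surjectivity of the linearisation.} We have $D_g\Phi(h)=Ph-\overline{Ph}$, where the bar denotes the mean. It suffices to show surjectivity on conformal directions $h=\psi g$ with $\psi\in C^{k,\alpha}_N(M)$. There
$$P(\psi g)=(n-1)\Big(\Delta-\frac{\scal_g}{n-1}\Big)\psi,$$
and the Neumann problem \eqref{SplittingBVP} is uniquely solvable with Schauder estimates, by the Fredholm alternative and $\scal_g/(n-1)\notin\sigma_N(\Delta)$. Hence $D_g\Phi$ is surjective, and restricted to $C^{k,\alpha}_{g,N}(M)\cdot g$ it is an isomorphism onto $C^{k-2,\alpha}_g(M)$. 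One must take care that the mean-zero projection is compatible: for $\mu\neq0$ a mean-zero $\psi$ maps to mean-zero data, while the Ricci-flat case is handled as in the proof of Proposition~\ref{prop:split}.

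\textbf{Submanifold and tangent space.} Surjectivity shows that $\ker D_g\Phi$ splits, with complement $C^{k,\alpha}_{g,N}(M)\cdot g$, so the implicit function theorem makes $\mathcal{C}_B$ a submanifold near $g$. Its tangent space is
$$T_g\mathcal{C}_B=\ker D_g\Phi=\{h:\ Ph\ \text{constant}\}=\ker(\Delta\circ P).$$
Proposition~\ref{prop:split} then gives the splitting $\R\cdot g\oplus\ker_g(\Delta\circ P)$, using $Pg=-\scal_g$. In particular
$$T_g\mathcal{M}_B=C^{k,\alpha}_{g,N}(M)\cdot g\oplus T_g\mathcal{C}_B.$$

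\textbf{The local diffeomorphism.} Consider $\Theta(f,\widetilde g)=(1+f)\widetilde g$. First we check that it lands in $\mathcal{M}^{k,\alpha}_B$. Since $\nu(f)=0$, the boundary conditions are preserved with $\psi$ shifted by $\log(1+f)$: the mean curvature of $e^{u}\widetilde g$ is $e^{-u/2}(\MC_{\widetilde g}+\frac{n-1}{2}\nu(u))$, and the normal derivative term vanishes. This is exactly why Neumann functions are required. The differential of $\Theta$ at $(0,g)$ is
$$(\dot f,k)\mapsto \dot f g+k,$$
which is an isomorphism $C^{k,\alpha}_{g,N}\cdot g\oplus T_g\mathcal{C}_B\to T_g\mathcal{M}_B$ by the splitting above. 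The inverse function theorem then finishes the proof.

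\textbf{Main obstacle.} The step I expect to be most delicate is the Banach-manifold structure of $\mathcal{M}^{k,\alpha}_B$. It involves a first-order boundary condition (the mean curvature) coupled to the free function $\psi$, together with the associated regularity loss. The plan is to handle it with explicit collar extensions, choosing $h=\chi(r)(a\,g^T+r\,b)$ to realise the boundary data.
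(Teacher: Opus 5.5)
Your route is the paper's: the implicit function theorem for a normalised scalar-curvature map, surjectivity from invertibility of $\Delta-\frac{\scal_g}{n-1}$ on mean-zero Neumann functions, then the inverse function theorem for $(f,\widetilde g)\mapsto(1+f)\widetilde g$. Normalising with the fixed $g$-mean is cleaner than the paper's choice, because your $\Phi$ lands in one fixed Banach space. The paper also simply takes the manifold structure of $\mathcal{M}^{k,\alpha}_B$ for granted, so your collar argument is extra rigour.

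The first step that fails is $\tb{h: P_gh \text{ constant}}=\ker(\Delta_g\circ P_g)$; only $\subseteq$ holds. Take the flat unit ball ($\mu=0$, so the hypotheses hold). There $\psi=(3-\abs{x}^2)x_1/(2n+4)$ is Neumann with zero mean and $\Delta\psi=x_1$. So $h=\psi g\in T_g\mathcal{M}_B$ has $\Delta Ph=(n-1)\Delta x_1=0$, while $Ph=(n-1)x_1$ is not constant, so $h\notin T_g\mathcal{C}_B$. In general, the Neumann solution of $\p{\Delta-\frac{n\mu}{n-1}}\psi=u$, for any non-constant mean-zero harmonic $u$, does the same. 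Thus $\ker(\Delta\circ P)$ meets $C^\infty_{g,N}(M)\cdot g$ non-trivially. The sum \eqref{CSCsplitting} is therefore not direct and $\ker_g(\Delta\circ P)\neq\ker_gP$, so citing Proposition \ref{prop:split} does not rescue the step; the first equality in the statement is false as written. What the implicit function theorem gives is $T_g\mathcal{C}_B=\tb{h:Ph\in\R}=\R\cdot g\oplus\ker_gP$. This is also what the paper's proof actually derives, by evaluating $D_g\Phi$ on the three summands of \eqref{Esplitting}. One line suffices: if $Ph$ is constant and $h_0=h-cg$ has zero total trace, then $Ph_0$ is constant. The identity \eqref{PP*Green} with $\psi=1$ and $P^*1=-\mu g$ gives $\int_MPh_0\,dV=-\mu\int_M\tr h_0\,dV=0$, so $Ph_0=0$.

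The second gap is in your check that $\Theta$ maps into $\mathcal{M}_B^{k,\alpha}$. The formula $\MC_{e^u\widetilde g}=e^{-u/2}\p{\MC_{\widetilde g}+\frac{n-1}{2}\nu_{\widetilde g}(u)}$ needs $\nu_{\widetilde g}(f)=0$, but $f\in C^{k,\alpha}_{g,N}(M)$ only satisfies $\nu_g(f)=0$. These really differ on $\mathcal{C}_B$. Let $\phi$ be a diffeomorphism close to the identity that fixes $\partial M$ pointwise and has $d\phi(\nu_g^\sharp)=\nu_g^\sharp+Y$ for a tangential $Y$. Then $\widetilde g=\phi^*g$ has the same induced metric, second fundamental form and constant scalar curvature, so $\widetilde g\in\mathcal{C}_B$. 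However $\nu_{\widetilde g}^\sharp=\nu_g^\sharp-Y$, so $\nu_{\widetilde g}(f)=-Y(f)$, and in general $(1+f)\widetilde g\notin\mathcal{M}_B$. The Neumann condition has to follow $\widetilde g$. For instance, use $(1+\Pi_{\widetilde g}f)\widetilde g$, where $\Pi_{\widetilde g}f$ is a collar correction of $f$ that is $\widetilde g$-Neumann, depends smoothly on $\widetilde g$ and satisfies $\Pi_g=\mathrm{id}$. The differential at $(0,g)$ is unchanged, so your inverse function argument then goes through. The paper's proof does not address this point either.
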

\begin{proof}
    Consider the map
    \begin{align*}\Phi(g)&:=\scal_g-\frac{1}{\abs{M}_g}\int_M \scal_g\;dV_g\\
    \Phi &: \mathcal{M}_B^{k,\alpha}\longrightarrow C^{k-2,\alpha}_g(M),\end{align*}
    where $\abs{M}_g=\int_M1\;dV_g$. It is important to note that, by design, $\int_M\Phi(g)\; dV_g=0$. This means that the linearisation at $g\in \mathcal{C}_B$, is a map $D_g\Phi:T_{g}\mathcal{M}^{k,\alpha}_B\to T_gC^{k-2,\alpha}_g(M)=C_g^{k-2,\alpha}(M)$. According to \eqref{Esplitting}, domain of $D_g\Phi$ splits as 
    $$T_g\mathcal{M}_B=C^{k,\alpha}_{g,N}(M)\cdot g \oplus \R\cdot g\oplus \ker_gP.$$
    If $h\in \ker_gP$, then $D_g\abs{M}(h)=0$ and 
    $$D_g\Phi(h)=Ph-\frac{1}{\abs{M}_g}\int_M Ph+\frac{1}{2}\scal_g\tr h\; dV=0.$$
    For $h=c\cdot g\in \R\cdot g$, we have $D_g\abs{M}(h)=\frac{nc}{2}\abs{M}_g$ and $Ph=-nc\mu$, where $\mu$ is the Einstein constant of $g$. Thus 
    \begin{align*}D_g\Phi(h)&=Ph-\frac{1}{\abs{M}_g}\int_M Ph\; dV+\frac{D_g\abs{M}_g(h)}{\abs{M}_g^2}\int_M \scal_g\;dV_g-\frac{1}{\abs{M}_g}\int_M \frac{1}{2}\scal_g\tr h\;dV_g\\
    &=-nc\mu+\frac{1}{\abs{M}_g}\int_M nc\mu\; dV+\frac{nc}{2\abs{M}_g}\int_M\scal_g\; dV-\frac{1}{\abs{M}_g}\int_M\frac{nc}{2}\scal_g\; dV=0.\end{align*}
    On the volume preserving complement $C_{g,N}^{k,\alpha}(M)\cdot g$, the operator $D_g\Phi$ reduces to 
    $$D_g\Phi\p{\psi\cdot g}=P\p{\psi\cdot g}=(n-1)\p{\Delta-\frac{n\mu}{n-1}}\psi.$$
    By assumption, $\Delta-\frac{n\mu}{n-1}:C^{k,\alpha}_{g,N}(M)\to C^{k-2,\alpha}_g(M)$ is invertible. This means $D_g\Phi$ is surjective to $C_g^{k-2,\alpha}(M)$, and by the implicit function theorem for Banach spaces (e.g. \cite[Theorem 3.12]{FNSS}), $\ker\Phi=\mathcal{C}_B\subset \mathcal{M}_B^{k,\alpha}$ is locally a real analytic manifold with tangent space 
    $$T_g\mathcal{C}_B=\ker D_g\Phi=\R\cdot g\oplus \ker_gP.$$
    Since $D_g\Phi$ has complemented kernel by \eqref{Esplitting}:
    $$T_g\mathcal{M}_B^{k,\alpha}=C^{k,\alpha}_{g,N}(M)\cdot g\oplus \ker D_g\Phi,$$
    we may apply the inverse function theorem to obtain a local diffeomorphism
    $$C^{k,\alpha}_{g,N}(M)\times \ker\Phi=C^{k,\alpha}_{g,N}(M)\times \mathcal{C}_B\longrightarrow \mathcal{M}_B^{k,\alpha},\qquad (f,\widetilde{g})\longmapsto (1+f)\widetilde{g}.$$
\end{proof}

\begin{remark}
    Using Theorem \ref{Escobar4.2}, it is possible to relax the condition $\scal_g/(n-1)\notin \sigma_N(\Delta)$ to non-isometry with $\mathbb{S}^n_+$ for the first part of the theorem, but it is necessary in the second. 
\end{remark}

Unfortunately, the boundary conditions does not allow for a slice theorem near an arbitrary (non-Einstein) CSC metric, along the lines of the splitting \eqref{CSCsplitting}. The problem is that the conditions on $h$ at the boundary, does not transfer to elliptic conditions on $Ph$, such as Neumann or Robin. As a result, $\Delta Ph=0$ does not imply $Ph\equiv c$. Otherwise, the result would follow by a similar proof.\\

We conclude this section with some properties of $\mathcal{M}_B$. For now, we are mostly interested in conformally invariant properties. One such property is convexity of the boundary. This can be seen by calculating a conformal transformation formula for the second fundamental form:
$$\SF_{e^{2f}g}=\frac{1}{2}\big(\mathcal{L}_{e^{-f}\nu}\p{e^{2f}g}\big)^T=\frac{1}{2}e^{-f}\p{e^{2f}\mathcal{L}_\nu g+\nu\p{e^{2f}}\cdot g}^T=e^{f}\p{\SF_g+\nu(f)g^T},$$
which shows that Neumann conformal factors preserve positivity of the second fundamental form. This is particular important with regards to stability, as we have the lower bound \eqref{Obata} on the principal Neumann eigenvalue whenever the boundary is convex. Similarly, scalar curvature positivity is shared among conformally related CSC metrics in $\mathcal{M}_B$.

\begin{proposition}\label{properties_of_MB} Let $g_1,g_2\in \mathcal{M}_B$ be constant scalar curvature metrics in the same conformal class, $[g_1]=[g_2]$. Then $\scal_{g_1}$ and $\scal_{g_2}$ are either both positive, both negative, or both zero.

\end{proposition}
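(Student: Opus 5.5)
We will write $g_2=u^{\frac{4}{n-2}}g_1$ for a positive function $u$ (with $n\geq 3$). By the discussion after \eqref{MB}, conformally related metrics in $\mathcal{M}_B$ differ by a Neumann conformal factor, $g_2=e^{f}g_1$ with $f\in C^\infty_N(M)$. Hence $u=e^{\frac{n-2}{4}f}$ satisfies $\nu(u)=0$ on $\partial M$. The conformal transformation law for scalar curvature then reads
$$\frac{4(n-1)}{n-2}\Delta_{g_1}u+\scal_{g_1}u=\scal_{g_2}\,u^{\frac{n+2}{n-2}},$$
where $\Delta=\delta d$ is the nonnegative geometers' Laplacian.

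The plan is to integrate this identity over $M$ against $dV_{g_1}$. Since $u$ is Neumann, the divergence theorem gives $\int_M\Delta_{g_1}u\,dV_{g_1}=-\int_{\partial M}\nu(u)\,dA=0$. Using that both scalar curvatures are constant, we are left with
$$\scal_{g_1}\int_M u\,dV_{g_1}=\scal_{g_2}\int_M u^{\frac{n+2}{n-2}}\,dV_{g_1}.$$
Both integrals are strictly positive because $u>0$. Therefore $\scal_{g_1}$ and $\scal_{g_2}$ have the same sign, and in particular one vanishes if and only if the other does.

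The only point that needs care is checking that the boundary term really drops out. This rests on the observation that the Neumann condition on the conformal factor is forced by membership in $\mathcal{M}_B$: the mean curvature prescription $\MC_g=e^{-\psi/2}H$ combined with the conformal law $\MC_{e^{2f}g}=e^{-f}\p{\MC_g+(n-1)\nu(f)}$ pins down $\nu(f)=0$. This has already been asserted in the paper, so we can simply invoke it. As an alternative that avoids the power $u^{\frac{n+2}{n-2}}$, one could instead multiply the transformation law by $u^{-1}$ and integrate. In that version the term $\int u^{-1}\Delta u=-\int |\nabla u|^2u^{-2}\le 0$ has a sign, but it does not directly give equality of signs in every case. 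The first approach is cleaner, so we commit to it.
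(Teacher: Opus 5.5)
Your proof is correct, and it takes a different and shorter route than the paper's.

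\textbf{The paper's route.} The paper normalises $\mathrm{Vol}_{g_2}(M)=1$ and computes the total scalar curvature of $g_2$. This amounts to multiplying the transformation law by $u$ before integrating, and it yields the Yamabe energy
$$\scal_{g_2}=\int_M\frac{4(n-1)}{n-2}\abs{\nabla u}^2+\scal_{g_1}u^2\,dV_{g_1}.$$
Because the gradient term is nonnegative, this identity only gives one-sided implications: $\scal_{g_1}>0\Rightarrow\scal_{g_2}>0$, and $\scal_{g_1}=0\Rightarrow\scal_{g_2}\ge 0$. The paper therefore has to swap the roles of $g_1$ and $g_2$, and then settle the zero and negative cases by exclusion.

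\textbf{Your route.} You test the equation against the constant function $1$ instead of against $u$. The Neumann condition then removes the Laplacian term entirely, leaving the two-sided identity
$$\scal_{g_1}\int_M u\,dV_{g_1}=\scal_{g_2}\int_M u^{\frac{n+2}{n-2}}\,dV_{g_1}$$
with strictly positive weights. This handles all three cases at once, with no normalisation and no case analysis.

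\textbf{Common ingredient.} Both arguments use the Neumann condition on $u$ in exactly the same place, to discard the boundary term. Your derivation of it from $g_2^T=e^{2f}g_1^T$ and $\MC_{e^{2f}g}=e^{-f}\p{\MC_g+(n-1)\nu(f)}$ is correct, and it works regardless of whether $H$ vanishes.

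\textbf{Comparison.} The paper's version has the merit of exhibiting $\scal_{g_2}\,\mathrm{Vol}_{g_2}(M)$ as the Yamabe energy of $u$ relative to $g_1$, which is the familiar mechanism from the closed case. Yours is more economical and symmetric in $g_1,g_2$ for the statement as posed. In the scalar-flat case it also immediately gives $\Delta_{g_1}u=0$ with Neumann data, hence $u$ constant.
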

\begin{proof}
    Since rescalings preserve the sign of the scalar curvature, we will assume that $g_2$ is scaled such that $\mathrm{Vol}_{g_2}(M)=1$. Let $g_2=u^{\frac{4}{n-2}}g_1$ for a positive Neumann function $u\in C_N^\infty(M)$. Using the conformal identities,
    $$\scal_{g_2}=u^{-\frac{4}{n-2}}\p{\scal_{g_1}+\frac{4(n-1)}{n-2}\frac{\Delta_{g_1}u}{u}}$$
    and
    $$dV_{g_2}=u^{\frac{2n}{n-2}}\; dV_{g_1},$$
    we find that 
    \begin{align*}\scal_{g_2}=&\int_M \scal_{g_2}\; dV_{g_2}=\int_M\frac{4(n-1)}{n-2}\abs{\nabla u}_{g_1}^2+\scal_{g_1} u^2\; dV_{g_1}.\end{align*}
    If $\scal_{g_1}>0$, the right hand side is indisputably positive. If $\scal_{g_1}=0$, the right hand side is positive if $u$ non-constant. However, the previous argument can be used to show that $\scal_{g_2}>0$ implies $\scal_{g_1}>0$. It is therefore not possible that only one scalar curvature vanishes. The statement for the case of negative scalar curvature follows by exclusion.
\end{proof}

\section{Stability of Einstein Metrics}

Proposition \ref{prop:split} and the slice theorem, Theorem \ref{MB_cor}, are important requisites for studying the extremality properties of Einstein metrics as critical points for the Einstein--Hilbert action. Specifically, we seek to discover in which directions Einstein metrics are local minimisers/maximisers.

\subsection{The Stability Operator}

As was the case when we introduced $P_g=D_g\scal$, we formally define the stability operator on the bundle $S^2M$. In practice, the operator acts on sections of a given regularity, determined by the regularity we consider on the space of metrics. Assume smoothness unless stated otherwise.

\begin{definition} Let $(M,g)$ be an Einstein manifold with $\Ric_g=\mu g$. For $h\in S^2M$ we define the stability operator
    \begin{equation*}\begin{split}F_gh:=&2D_g\Ric(h)-D_g\scal(h)\cdot g-2\mu h \\=& \Delta_L h-2\delta^*_g\beta_g h-\Delta_g \mathrm{tr}_g h\cdot g+\mu\mathrm{tr}_g h\cdot g-\delta_g\delta_g h\cdot g-2\mu h\\ =&\Delta_E h-2\delta^*_g\beta_g h-P_gh\cdot g\end{split}\end{equation*}
where $\Delta_Eh=\nabla^*\nabla h-2\overset{\circ}{R}h$ is the Einstein operator.
\end{definition}

The operator may be familiar to most readers as the (formal) $L^2$-Hessian of the Einstein--Hilbert action;
$$S''_g(h)=-\frac{1}2\Big(F_g h,h\Big)_{L^2}.$$
In contrast to the fact that a CSC metric is Einstein if and only if $S_g'(h)=0$ for all $h\in \ker\p{ \Delta\circ P}$, we shall see that instabilities are not always confined to this space. In Examples \ref{example:spherical_strip} and \ref{example:spherical_cap} of the next section, we discuss two such cases.

\begin{lemma} \label{F_properties}
    The stability operator, $F_g:S^2M\to S^2M$, satisfies
    \begin{enumerate}
        \item $F_g\circ \delta^*_g=0$,
        \item $\delta_g \circ F_g=0$,
        \item $\mathrm{tr}_g F_g=-(n-2)P_g$,
        \item $F_g\p{\psi g}=-(n-2)P^*_g\psi$,
        \item $P_g\circ F_g=\p{\Delta_g-2\mu-\mathrm{tr}_g P^*_g}\circ P_g$.
    \end{enumerate}
    Additionally, the decompositions of Proposition \ref{prop:split} are orthogonal with respect to $F_g$.
\end{lemma}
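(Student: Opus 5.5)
Items 1--5 are pointwise identities, so I will prove them from the defining formula $F_gh=2D_g\Ric(h)-P_gh\cdot g-2\mu h$ together with diffeomorphism invariance and Lemma \ref{properties}. The orthogonality claim is the only part where the boundary conditions matter.

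\emph{Item 1.} Since $\Ric_g=\mu g$ and $\scal_g$ is constant, naturality gives $D_g\Ric(\delta^*\alpha)=\frac12\Lie_{\alpha^\sharp}\Ric_g=\mu\,\delta^*\alpha$. Combined with $P\delta^*\alpha=0$ from Lemma \ref{properties}, this yields $F\delta^*\alpha=2\mu\delta^*\alpha-0-2\mu\delta^*\alpha=0$.

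\emph{Item 2.} Linearise the contracted Bianchi identity $\delta_g\Ric_g+\frac12 d\scal_g=0$ in direction $h$. The term $(D\delta)(h)\Ric_g=\mu(D\delta)(h)g$ combines with $\mu\delta h$: differentiating $\delta_g g=0$ gives $(D\delta)(h)g=-\delta h$, so these cancel. What remains is $\delta D\Ric(h)=-\frac12 dPh$. Hence $\delta F h=-dPh+dPh-2\mu\delta h+2\mu\delta h$. I will check the bookkeeping using $\delta(f g)=-df$.

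\emph{Items 3 and 4.} For item 3, take the trace: $\tr D\Ric(h)=Ph+\g{\Ric}{h}=Ph+\mu\tr h$, so $\tr Fh=2Ph+2\mu\tr h-nPh-2\mu\tr h=-(n-2)Ph$. For item 4, the conformal variation formula gives $D\Ric(\psi g)=\frac{n-2}{2}\nabla^2\psi+\frac12\Delta\psi\, g$ (geometer's sign convention). Together with $P(\psi g)=(n-1)\Delta\psi-n\mu\psi$, this gives $F(\psi g)=(n-2)\nabla^2\psi+\Delta\psi\, g-(n-1)\Delta\psi\,g+n\mu\psi g-2\mu\psi g=-(n-2)P^*\psi$. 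The sign convention in the Ricci variation needs care here.

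\emph{Item 5.} I will apply $P=\Delta\tr-\mu\tr+\delta\delta$ to $Fh=\Delta_Eh-2\delta^*\beta h-Ph\,g$, using three facts:
\begin{itemize}
\item $P\delta^*=0$;
\item $P(f g)=\tr P^*f$ by Lemma \ref{properties};
\item $P\Delta_E=P(\Delta_L-2\mu)=(\Delta-2\mu)P$ by Lemma \ref{properties}(4), since $\Delta_E=\Delta_L-2\mu$ on Einstein metrics.
\end{itemize}
Together these give $(\Delta-2\mu-\tr P^*)P$.

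\emph{Orthogonality.} I must show $(Fu,v)_{L^2}=0$ for $u,v$ in different summands. Take $u=\psi g$ with $\psi\in C^\infty_N$ and $k\in\ker_gP$. By item 4, $(F(\psi g),k)=-(n-2)(P^*\psi,k)$. The Green identity \eqref{PP*Green} applies because $k\in T_g\mathcal{M}_B$ and $\psi$ is Neumann, so this equals $-(n-2)(\psi,Pk)=0$. The reverse pairing, and any constant component $c\,g$, need symmetry of $F$ on $T_g\mathcal{M}_B$. I would derive this as a Green identity for $F$ whose boundary terms vanish under \eqref{BC1} and \eqref{BC2geometric}, or deduce it from $F$ being the Hessian of $S$ restricted to $\mathcal{M}_B$ (a smooth submanifold), where the boundary contributions are absorbed by the conditions. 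Alternatively I can bypass symmetry: write $(Fk,\psi g)=(\tr Fk,\psi)=-(n-2)(Pk,\psi)=0$ by item 3. This is purely pointwise and needs no boundary terms.

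I expect item 2, together with the sign conventions in items 2 and 4, to be the main computational obstacle. The trace trick means the orthogonality itself should be routine.
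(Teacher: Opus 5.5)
Your overall route is the paper's: linearised Bianchi and trace identities for items 2 and 3, Lemma \ref{properties} for item 5, and item 4 combined with \eqref{PP*Green} for orthogonality. However, the computation in item 4 is wrong as written, and item 2 contains a false intermediate claim.

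\textbf{Item 4.} With $\Delta=\delta d$ the conformal variation is $D_g\Ric(\psi g)=\tfrac12\Delta\psi\,g-\tfrac{n-2}{2}\nabla^2\psi$, not $+\tfrac{n-2}{2}\nabla^2\psi$. One way to see this is from $2D_g\Ric(h)=\Delta_Lh-2\delta^*\beta h$, with $\Delta_L(\psi g)=\Delta\psi\,g$ and $2\delta^*\beta(\psi g)=(n-2)\nabla^2\psi$; this is the paper's computation. Your own item 3 exposes the sign. There, $\tr D_g\Ric(\psi g)$ must equal $P(\psi g)+n\mu\psi=(n-1)\Delta\psi$. Your formula instead has trace $\Delta\psi$, because $\tr\nabla^2\psi=-\Delta\psi$.

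With your sign, the displayed sum simplifies to $(n-2)(\nabla^2\psi-\Delta\psi\,g+\mu\psi\,g)$. This differs from $-(n-2)P^*\psi$ by $2(n-2)\nabla^2\psi$, so the final equality you wrote is false. With the corrected sign the sum is exactly $-(n-2)P^*\psi$. Your orthogonality argument rests on item 4, so this has to be repaired.

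\textbf{Item 2.} The sentence claiming $\delta D_g\Ric(h)=-\tfrac12 dPh$ is false. Linearising $\delta_g\Ric_g+\tfrac12 d\scal_g=0$ gives $\delta D_g\Ric(h)=-(D\delta)(h)\Ric_g-\tfrac12 dPh=\mu\,\delta h-\tfrac12 dPh$. Nothing cancels inside that identity; the surviving $\mu\,\delta h$ is what cancels the $-2\mu\,\delta h$ in $\delta F_gh$. Your final line already uses this correct version, so only the intermediate claim needs fixing.

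\textbf{Where your route differs from the paper's.} Once these slips are corrected, the differences are harmless and partly advantageous.
\begin{itemize}
\item For item 1 you use equivariance of $\Ric$ under diffeomorphisms, $D_g\Ric(\delta^*\alpha)=\mu\,\delta^*\alpha$. The paper instead computes directly via $\Delta_L\circ\delta^*=\delta^*\circ\Delta_H$ and $2\beta\delta^*\omega=\nabla^*\nabla\omega-\mu\omega$. Your version is shorter.
\item For orthogonality, your pointwise identity $(F_gk,\psi g)_{L^2}=(\tr F_gk,\psi)_{L^2}=-(n-2)(P_gk,\psi)_{L^2}$ handles the reverse pairing with no boundary terms at all. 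This removes any need for the self-adjointness coming from \eqref{greens}, which the paper only establishes afterwards. The paper's remark explicitly covers just $(F_g(\psi g),k)_{L^2}$.
\item The constant component needs no symmetry argument either. Since $c\in C^\infty_N(M)$, the pairings between $\R\cdot g$ and $C^\infty_{g,N}(M)\cdot g$ vanish directly from item 4, by the zero-mean and Neumann conditions respectively.
\end{itemize}
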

\begin{proof}
   (\textit{1.}) follows from the diffeomorphism invariance of the Einstein--Hilbert action, but may also be shown directly: Let $\omega\in \Omega^1(M)$ and apply the second identity of Lemma \ref{properties} ($P\circ \delta^*=0$);
    \begin{align*}F\p{\delta^*\omega}=&\Delta_L\delta^*\omega-2\delta^*\beta \delta^* \omega-2\mu \delta^*\omega\\
    =&\delta^*\p{\nabla^*\nabla \omega-\mu \omega-2\beta\delta^*\omega},\end{align*}
    in which we also used the commutation relation $\Delta_L\circ \delta^*=\delta^*\circ \Delta_H$, where $\Delta_H\omega =\nabla^*\nabla\omega+\Ric\cdot \omega=\nabla^*\nabla \omega+\mu\omega $ is the Hodge Laplacian on one-forms.
    A standard application of a Ricci identity shows that 
    $$2\beta \delta^*\omega=\nabla^*\nabla \omega-\mu \omega,$$
    which proves the claim.\\
    
    (\textit{2.}) may be proven in several ways. Writing $g_t:=g+th$ it becomes a direct consequence of the contracted Bianchi identity: 
    \begin{align*}0=&\left.\frac{d}{dt}\right|_{t=0}\delta_{g_t}\p{2\Ric_{g_t}-\scal_{g_t}\cdot g_t-2\mu g_t}\\=&\delta Fh-\delta \p{2\Ric_g-2\mu g}\\ =&\delta Fh.\end{align*}
    It will also follow as a corollary to the Green identity \eqref{greens} below.\\

    (\textit{3.}) follows from the observation that 
    $$\left.\frac{d}{dt}\right|_{t=0}\tr_{g_t} \p{2\Ric_{g_t}-\scal_{g_t}\cdot g_t-2\mu g_t}=-(n-2)\left.\frac{d}{dt}\right|_{t=0}\scal_{g_t}=-(n-2)Ph,$$
    while at the same time, using the Leibniz rule $\p{\tr k}'=\tr k'-\g{h}{k}$,
    \begin{align*}\left.\frac{d}{dt}\right|_{t=0}\tr_{g_t} \p{2\Ric_{g_t}-\scal_{g_t}\cdot g-2\mu g_t}=&\tr \p{Fh-\scal_g \cdot h}-\g{h}{2\Ric_g-\scal_g\cdot g -2\mu g}\\ =&\tr Fh-\scal_g\tr h+\scal_g \tr h\\=& \tr Fh.\end{align*}

    (\textit{4.}) is a direct computation, using 
    $$\Delta_L(\psi\cdot g)-\Delta\tr(\psi\cdot g)\cdot g-\delta\delta (\psi\cdot g)=-(n-2)\Delta\psi \cdot g$$
    and 
    $$2\delta^*\beta (\psi\cdot g)=\p{n-2}\nabla^2\psi.$$
    (\textit{4.}) also proves the orthogonality statement for the splittings, when it is combined with the Green identity for $P$ and $P^*$, equation \eqref{PP*Green}.\\

    (\textit{5.}) follows from equations (\textit{1.}), (\textit{2.}) and (\textit{5.}) of Lemma \ref{properties};
    \begin{align*}P\p{Fh}&=P\p{\Delta_Lh-2\mu h-2\delta^*\beta h-Ph\cdot g}\\&=P\p{\Delta_L h}-2\mu Ph-P\p{Ph\cdot g}\\ &=\Delta Ph-2\mu Ph-\tr P^* Ph.\end{align*}
\end{proof}

In the end, we are looking to set up a well defined elliptic eigenvalue problem for the stability operator. The first point of Lemma \ref{F_properties} shows that $F_g$ is not elliptic on its own, as it has an infinite-dimensional kernel. This problem will be addressed in Section \ref{sec:stab}. For now, we note that ellipticity is most useful when it is accompanied by self-adjointness. This we address with the following Green identity for $F_g$.

\begin{proposition}
Let $g$ be Einstein and $h,k\in S^2M$, then 
\begin{equation}\label{greens}\begin{split}
    \int_M\g{F_gh}k-\g{h}{F_gk}\; dV=-&\int_{\partial M}\g{2\SF'_g(h)-\frac{1}{n-1}\trgt h\; \SF_g}{\overset{\circ}{k}\!\:^T}-\g{\overset{\circ}{h}\!\:^T}{2\SF'_g(k)-\frac{1}{n-1}\trgt k\; \SF_g}\\ &+\frac{n-2}{n-1}\trgt h\p{2\MC'_g(k)+\g{k^T}{\SF_g}}-\frac{n-2}{n-1}\trgt k\p{2\MC'_g(h)+\g{h^T}{\SF_g}}\; dA.
\end{split}\end{equation}
In particular, $F_g$ is self-adjoint on $T_g\mathcal{M}_B$.
\end{proposition}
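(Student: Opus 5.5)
The plan is to obtain \eqref{greens} by integrating by parts term by term in the third expression for $F_g$,
$$F_gh=\Delta_E h-2\delta^*\beta h-P h\cdot g ,$$
and then to collect the boundary terms into the stated geometric form. Since the identity is antisymmetric in $(h,k)$, it suffices to compute $\int_M\g{F h}{k}\,dV$ modulo expressions symmetric in $h,k$ (bulk or boundary) and then antisymmetrise.

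\textbf{Step 1: the bulk terms.} We handle the three pieces in turn.
\begin{itemize}
\item For $\Delta_E h=\nabla^*\nabla h-2\overset{\circ}{R}h$, the curvature term is pointwise symmetric. One integration by parts gives $\int\g{\nabla h}{\nabla k}$ plus the boundary term $-\int_{\PS}\g{\nabla_\nu h}{k}\,dA$.
\item For $-2\delta^*\beta h$, we get $-2\int\g{\beta h}{\beta k}$ minus a term symmetric in the $\tr$ parts. The boundary term is $-2\int_{\PS}(\beta h)(k\cdot\nu)\,dA$.
\item For $-Ph\,g$, we pair it as $-\int Ph\,\tr k$. We then use the Green identity \eqref{PP*Green} with $\psi=\tr k$, and the identity $P(\psi g)=\g{P^*\psi}{g}$ of Lemma \ref{properties}, to move $P$ onto $k$ up to symmetric terms.
\end{itemize}
Explicitly, the bulk part of $\int\g{Fh}{k}$ should reduce to the symmetric form
$$\int\g{\nabla h}{\nabla k}-2\g{\overset{\circ}{R}h}{k}-2\g{\beta h}{\beta k}-\ldots$$
after a second integration by parts in the $\delta\delta h\cdot \tr k$ and $\Delta\tr h\,\tr k$ terms. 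Every integration produces a boundary integral involving $\nabla_\nu h$, $\delta h(\nu)$, $\nu(\tr h)$, $h\cdot\nu$ and $h(\nu,\nu)$ paired against components of $k$.

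\textbf{Step 2: organising the boundary terms.} After antisymmetrising, we decompose $k|_{\PS}$ into the tangential trace-free part $\overset{\circ}{k}\!\:^T$, the tangential trace $\trgt k$, the mixed part $(k\cdot\nu)^T$ and the normal part $k(\nu,\nu)$. The coefficients of these are then identified.
\begin{itemize}
\item The terms in $\nabla_\nu h^T$ paired with $\overset{\circ}{k}\!\:^T$ are converted using \eqref{eq:SFvariation}, which rewrites $\nabla_\nu h^T$ as $2\SF'(h)$ minus lower-order terms $\SF\times h^T$, $\delta^*_{g^T}(h\cdot\nu)$ and $h(\nu,\nu)\SF$.
\item The trace terms, namely $\nu(\trgt h)$ together with $\delta_{g^T}(h\cdot \nu)$, are collected via \eqref{MCvar} and \eqref{BC2nongeometric} into $2\MC'(h)+\g{h^T}{\SF}$.
\item The leftover tangential divergence terms are moved off $h\cdot\nu$ by integrating by parts on the closed boundary $\PS$.
\end{itemize}

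\textbf{Step 3: the main obstacle.} The main difficulty is to show that all terms involving the normal and mixed components $k(\nu,\nu)$ and $(k\cdot \nu)^T$ cancel after antisymmetrisation. The same must hold for the remaining zeroth-order terms such as $\g{\SF\times h^T}{k^T}$ and $h(\nu,\nu)\g{\SF}{k^T}$, so that only the four displayed terms survive, with coefficients $1$ and $\frac{n-2}{n-1}$. I would first check this in a model case, such as the flat half-space, to fix the coefficients, and then track the $\SF$-terms carefully. The cancellation of normal components should reflect diffeomorphism invariance: by Lemma \ref{F_properties}, $F\delta^*=0$ and $\delta F=0$. As a consistency check, I would take $k=\delta^*\omega$ with $\omega\in\Omega_0$.

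\textbf{Step 4: self-adjointness.} If $h,k\in T_g\mathcal{M}_B$, then \eqref{BC1} gives $\overset{\circ}{h}\!\:^T=\overset{\circ}{k}\!\:^T=0$, so the first line of \eqref{greens} vanishes. Condition \eqref{BC2geometric}, together with $\g{h^T}{\SF}=\varphi\MC$, gives $2\MC'(h)+\g{h^T}{\SF}=0$, so the second line vanishes as well. Hence $F_g$ is self-adjoint on $T_g\mathcal{M}_B$.
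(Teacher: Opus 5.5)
\emph{Verdict: genuine gap.} Your route is the paper's route: integrate by parts, then rewrite the boundary terms using \eqref{eq:SFvariation}, \eqref{MCvar} and \eqref{BC2nongeometric}. But the content of \eqref{greens} is exactly the bookkeeping you postpone to Step~3, and you state that step as an expectation without carrying it out. The checks you propose cannot replace it. In the flat half-space $\SF=0$, so at best it fixes the $\SF$-free coefficients. It sees none of the $\SF$-terms (the $-\frac{1}{n-1}\trgt h\,\SF$ corrections, the $\g{h^T}{\SF}$ terms), and those are what Step~3 must control. For $k=\delta^*\omega$ with $\omega\in\Omega_0$, the flow fixes $\PS$ pointwise, so $k^T=0$ and $\SF'(k)=\MC'(k)=0$. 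Both sides of \eqref{greens} then vanish for trivial reasons, and would vanish with any coefficients in place of $1$ and $\frac{n-2}{n-1}$. Two statements in your plan are also inaccurate:
\begin{itemize}
\item The $k(\nu,\nu)$ and $(k\cdot\nu)^T$ terms do not cancel. They survive inside $\SF'(k)$ and $\MC'(k)$; what must be shown is that they occur nowhere else.
\item In Step~1, $-2\delta^*\beta h$ does not leave only $-2\int\g{\beta h}{\beta k}$ plus something symmetric. The cross term $\int\g{\delta h}{d\tr k}$ is not symmetric. It disappears only against the $\delta\delta h\,\tr k$ part of $-\int Ph\,\tr k$.
\end{itemize}

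What closes the gap, as in the paper, is the following. Write $-2\delta^*\beta h=-2\delta^*\delta h-\nabla^2\tr h$ and set $Q:=\nabla^*\nabla-2\delta^*\delta$.
\begin{itemize}
\item Two integrations by parts in the $Ph\cdot g$ term leave the bulk remainder $\int_M\g{h}{\nabla^2\tr k}-\g{\nabla^2\tr h}{k}\,dV$. This cancels the Hessian terms exactly, so only boundary terms remain.
\item In the boundary terms of $Q$, the normal derivative of the mixed components coming from $\nabla^*\nabla$ cancels the one inside $-2(\delta h)^T$. The tangential remainder integrates on the closed $\PS$ to $-2\int_{\PS}\g{h^T}{\delta^*_{g^T}(k\cdot\nu)}\,dA$, which is the $\delta^*_{g^T}$-part of \eqref{eq:SFvariation}. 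This holds up to the symmetric expression $2\MC\g{(h\cdot\nu)^T}{(k\cdot\nu)^T}+2\SF\p{(h\cdot\nu)^T,(k\cdot\nu)^T}$.
\item Insert \eqref{eq:SFvariation} and the expansion of $\g{\delta h}{\nu}$ used to prove \eqref{BC2nongeometric}. The normal terms from $Q$ and from the $P$-identity then combine via \eqref{BC2nongeometric} into $-k(\nu,\nu)\p{2\MC'(h)+\g{h^T}{\SF}}$.
\item Since $\tr k=\trgt k+k(\nu,\nu)$, this term is absorbed by the boundary term $\tr k\p{2\MC'(h)+\g{h^T}{\SF}}$ of the $P$-identity.
\end{itemize}
The result is
\begin{equation*}
\begin{split}
\int_M\g{Fh}{k}-\g{h}{Fk}\,dV=-\int_{\PS}&\g{2\SF'(h)}{k^T}-\g{h^T}{2\SF'(k)}\\
&-\trgt k\p{2\MC'(h)+\g{h^T}{\SF}}+\trgt h\p{2\MC'(k)+\g{k^T}{\SF}}\,dA.
\end{split}
\end{equation*}
Now split $k^T=\overset{\circ}{k}\!\:^T+\frac{\trgt k}{n-1}g^T$ and use $\trgt\SF'(h)=\MC'(h)+\g{h^T}{\SF}$. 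This produces the $-\frac{1}{n-1}\trgt h\,\SF$ corrections, the $\trgt h\,\trgt k\,\MC$ terms cancel, and the coefficient $1-\frac{1}{n-1}=\frac{n-2}{n-1}$ appears. Your Step~4 is correct as written.
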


\begin{proof}
    We commence by calculating a Green identity for $Ph\cdot g$. For arbitrary $h,k\in S^2M$, integration by parts yields
    \begin{equation}\label{PGreens}\begin{split}\int_M\g{Ph\cdot g}{k}-\g{h}{Pk\cdot g}\; dV=&\int_M\tr k\p{\Delta\tr h+\delta \delta h}-\tr h\p{\Delta\tr k+\delta \delta k}\; dV\\=&\int_M\g{\delta h}{d \tr k}-\g{d \tr h}{\delta k}\; dV\\&-\int_{\partial M}\tr k\g{\delta h+d \tr h}{\nu}-\tr h\g{\delta k+d \tr k}{\nu}\; dA\\ =&\int_M\g{h}{\nabla^2\tr k}-\g{\nabla^2\tr h}{k}\; dV\\&-\int_{\partial M}h\p{d\tr k,\nu}-k\p{d\tr h,\nu}-\tr k\;\delta_{g^T}\p{h\cdot \nu}+\tr h\;\delta_{g^T}\p{k\cdot \nu}\; dA\\&-\int_{\partial M}\tr k\p{2\MC'(h)+\g{h^T}{\SF}}-\tr h\p{2\MC'(k)+\g{k^T}{\SF}}\;dA,\end{split}\end{equation}
    where we used \eqref{BC2nongeometric}:
    $$\g{\delta h+d\tr h}{\nu}=2\MC'(h)+\g{h}{\SF}-\delta_{g^T}\p{h\cdot \nu}.$$
    Performing integration by parts in the first boundary integral in \eqref{PGreens}, we obtain
     \begin{equation}\begin{split}\int_M\g{Ph\cdot g}{k}-\g{h}{Pk\cdot g}\; dV=&\int_M\g{h}{\nabla^2\tr k}-\g{\nabla^2\tr h}{k}\; dV\\&-\int_{\partial M}h\p{\nu,\nu}\g{d\tr k}{\nu}-k\p{\nu,\nu}\g{d\tr h}{\nu} dA\\&-\int_{\partial M}\tr k\p{2\MC'(h)+\g{h^T}{\SF}}-\tr h\p{2\MC'(k)+\g{k^T}{\SF}}\;dA.\end{split}\end{equation}
     We now calculate a similar formula for the operator $Qh:=\nabla^*\nabla h-2\delta^*\delta h$;
     \begin{equation}\label{QGreens}\begin{split}\int_M\g{Qh}{k}-\g{h}{Qk}\; dV=&-\int_{\partial M}\g{\nabla_\nu h}{k}-\g{h}{\nabla_\nu k}+2k\p{\delta h,\nu}-2h\p{\delta k,\nu}\;dA\\=&-\int_{\partial M}\g{\nabla_\nu h^T-2\delta_{g^T}^*\p{h\cdot \nu}}{k^T}-\g{h^T}{\nabla_\nu k^T-2\delta_{g^T}^*\p{k\cdot \nu}}\; dA\\ &-\int_{\partial M}k(\nu,\nu)\p{2\g{\delta h}\nu+\nabla_{\nu}h(\nu,\nu)}-h(\nu,\nu)\p{2\g{\delta k}{\nu}+\nabla_{\nu}k(\nu,\nu)}\; dA.\end{split}\end{equation}
     Using the formula for the first variation of the second fundamental form, \eqref{eq:SFvariation}, we get 
     \begin{align*}\int_{\partial M}&\g{2\SF'(h)}{k^T}-\g{h^T}{2\SF'(k)}\;dA\\&=\int_{\partial M}\g{\nabla_\nu h^T-2\delta_{g^T}^*\p{h\cdot \nu}-h(\nu,\nu)\SF}{k^T}-\g{h^T}{\nabla_\nu k^T-2\delta_{g^T}^*\p{k\cdot \nu}-k(\nu,\nu)\SF}\; dA.\end{align*}
     This may be used in \eqref{QGreens} to obtain
      \begin{equation*}\begin{split}\int_M\g{Qh}{k}-\g{h}{Qk}\; dV=&-\int_{\partial M}\g{2\SF'(h)}{k^T}-\g{h^T}{2\SF'(k)}\; dA\\ &-\int_{\partial M}k(\nu,\nu)\p{\g{\delta h}\nu+\delta_{g^T}(h\cdot \nu)}-h(\nu,\nu)\p{\g{\delta k}{\nu}+\delta_{g^T}\p{k\cdot \nu}}\; dA.\end{split}\end{equation*}
      Combining this with the Green identity for $Ph\cdot g$ yields
      \begin{equation*}\begin{split}\int_M\g{F_gh}{k}-\g{h}{F_gk}\; dV=&-\int_{\partial M}\g{2\SF'(h)}{k^T}-\g{h^T}{2\SF'(k)}\; dA\\ &-\int_{\partial M}\trgt h\p{2\MC'(k)+\g{k^T}{\SF}}-\trgt k\p{2\MC'(h)+\g{h^T}{\SF}}\; dA. \end{split}\end{equation*}
      The claimed identity now follows from  
      $$\trgt \SF'(h)=\MC'(h)+\g{h^T}{\SF}.$$
      Self-adjointness on $T_g\mathcal{M}_B$ is immediate, as $h\in T_g\mathcal{M}_B$ exactly when $\overset{\circ}{h}\!\:^T=0$ and $2\MC'(h)+\langle h^T,\SF\rangle=0$.
\end{proof}

\begin{remark}\label{infEinsteindef}
    A standard argument shows that the kernel of $F_g$ agrees with the kernel of the linearized Einstein tensor, modulo rescalings and diffeomorphisms. Indeed, by Lemma \ref{F_properties} point (\textit{3.})
    \begin{align*}E_g'(h)&=D_g\Ric(h)-\frac{1}{n}D_g\scal(h)\cdot g-\mu g\\ &=\frac{1}{2}F_gh+\frac{n-2}{2n}P_g h\cdot g\\&=\frac{1}2\overset{\circ}{F_gh}.\end{align*}
    So if $h\in \ker F_g$, it is evident that $E_g'(h)=0$. Conversely, if $E'_g(h)=0$, then $F_g h$ is pure trace;
    $$F_gh=\frac{1}{n}\tr F_g h\cdot g=-\frac{n-2}{n}P_g h\cdot g.$$
    However, as $F_g h$ is always transverse, we have 
    $$0=\delta_g\delta_g F_g h=-\frac{n-2}{n}\delta_g\delta_g\p{P_gh\cdot g}=\frac{n-2}{n}\Delta_g\p{P_g h},$$
    and Proposition \ref{prop:split} says that $\ker\p{\Delta_g \circ P_g}=\R\cdot g\oplus \ker_g P_g$, where $\ker_g P_g$ denotes the kernel elements with zero total trace. Consequently, either $h=c g$ is a rescaling, or $P_gh=0$ and
    $$F_gh=-\frac{n-2}{n}P_g h\cdot g=0.$$
    Thus $\ker_g F_g\subset T_g\mathcal{M}_B$ corresponds to the space of infinitesimal Einstein deformations, 
    $$\varepsilon(g):=\ker_g E'_g\cap \ker_g \delta_g\subset T_g\mathcal{M}_B$$ 
    (see e.g. \cite{Bes} Definition 12.29). To be even more precise, we can use apply \cite[Lemma 2.1]{An1} to show that 
    $$\ker_g F_g=\varepsilon(g)\oplus_{L^2}\Imag \delta^*|_{\Omega_0}.$$
\end{remark}

A direct corollary of Escobar's Theorem \ref{Escobar4.2}, using the fourth point of Lemma \ref{F_properties}, is the following proposition.  It can be regarded as an infinitesimal version of a later result, Theorem \ref{ObataTypeUniqueness}.

\begin{proposition}\label{infrig} Suppose $(M,g)$ is Einstein, but not isometric to $\mathbb{S}^n_+$, then 
$$\ker F_g\cap C_{g,N}^\infty(M)\cdot g=\tb{0}. $$
We shall say that $(M,g)$ is conformally non-degenerate.
\end{proposition}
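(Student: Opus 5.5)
The plan is to reduce the statement directly to Escobar's Theorem \ref{Escobar4.2} by means of the fourth identity of Lemma \ref{F_properties}. Let $h\in \ker F_g\cap C_{g,N}^\infty(M)\cdot g$, so that $h=\psi\cdot g$ for some $\psi\in C^\infty_{g,N}(M)$, that is, $\psi$ is a Neumann function with vanishing mean. Point (\textit{4.}) of Lemma \ref{F_properties} gives
$$0=F_g(\psi g)=-(n-2)P^*_g\psi .$$
Since $n\geq 3$, the factor $n-2$ is nonzero, and therefore $P^*_g\psi=0$. This places $\psi$ in $\ker P^*_g|_{C^\infty_{g,N}(M)}$.

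Theorem \ref{Escobar4.2} says that this kernel is nontrivial exactly when $(M,g)$ is isometric to a round hemisphere $\p{\mathbb{S}^n_+(r),g_{\mathbb{S}^n}}$ with $r=\sqrt{n(n-1)/\scal_g}$. We have assumed that $(M,g)$ is not isometric to a hemisphere, so the kernel is trivial. Hence $\psi=0$, which gives $h=0$, and this is the claim.

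No step here is a real obstacle, but a few points need checking. First, identity (\textit{4.}) of Lemma \ref{F_properties} must be applied to a Neumann function and not to an arbitrary one. This is harmless, because the identity is pointwise and holds for every smooth $\psi$. Second, the statement of Theorem \ref{Escobar4.2} has to match how "isometric to $\mathbb{S}^n_+$" is read in the proposition. The hemisphere occurs only when $\scal_g>0$, with the radius fixed by the Einstein constant. So when $\mu\le 0$ the hypothesis holds automatically, and the conclusion follows in that case as well. Third, the zero-mean normalisation in $C^\infty_{g,N}(M)$ matters. Without it, a constant $\psi$ would satisfy $P^*_g\psi=-\mu\psi g$, and this vanishes when $\mu=0$; in the Ricci-flat case, rescalings would then lie in the kernel. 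Restricting to mean-zero functions is exactly what excludes $\R\cdot g$.
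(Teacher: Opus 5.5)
Your proposal is correct and matches the paper's argument: identity (\textit{4.}) of Lemma \ref{F_properties} turns a kernel element $\psi g$ into an element of $\ker P^*_g|_{C^\infty_{g,N}(M)}$. Escobar's Theorem \ref{Escobar4.2} then forces that kernel to be trivial unless $(M,g)$ is a round hemisphere.
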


It is important to observe that it does not imply that the quadratic  form associated with $F_g$ is non-vanishing on $C^\infty_{g,N}(M)\cdot g$. Specifically, even if $(M,g)$ is not a hemisphere, it is still possible that there exists $\psi\in C^\infty_{g,N}(M)$ with $\Delta_g\psi-\frac{n\mu}{n-1}\psi =0$. In that case
$$\p{F_g(\psi g),\psi g}_{L^2}=0\qquad \text{while }\qquad F(\psi g)\neq 0.$$
We shall return to this in Section \ref{sec:stab}.

\begin{proposition}[Constraint Conditions]\label{prop:hiddenBC} Let $h\in \ker_gP\subset T_g\mathcal{M}_B$ be a Bianchi-gauged eigenmode for $F_g$: 
$$F_gh=\lambda h\qquad \qquad \&\qquad \qquad \beta_g h=0,$$
for some $\lambda\in \R$. Denote the boundary conformal factor $\varphi:=\frac{1}{n-1}\trgt h$, then the following equations hold on $\partial M$,
\begin{align}\label{hidden1}\Delta_{g^T}\varphi-\mu\varphi-\frac{n-1}{n-2}\lambda \varphi &=-\frac{1}{n-2}\g{2\SF'_g(h)-\varphi\SF_g}{\tf{\SF}_g}-\frac{\lambda}{2(n-2)}\mathrm{tr}_g h,\\
\label{hidden2}\delta_{g^T}\p{2\SF'_g(h)-\varphi\SF_g}&=\p{n-2}\SF_g\cdot \nabla^T \varphi-\lambda(h\cdot \nu)^T.\end{align}
\end{proposition}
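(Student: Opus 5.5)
The constraints are the boundary restrictions of the Gauss and Codazzi equations, linearised in the direction $h$ and evaluated on an eigenmode. Since $P_gh=0$ and $\beta_g h=0$, the eigenvalue equation reduces to
$$2E'_g(h)=2D_g\Ric(h)-2\mu h=F_gh=\lambda h,$$
with $\Ric_g=\mu g$. So $D_g\Ric(h)=\mu h+\frac{\lambda}{2}h$ and $D_g\scal(h)=0$.

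For \eqref{hidden2}, I linearise the Codazzi equation
$$\delta_{g^T}\SF_g+d\MC_g=-\Ric_g(\nu,\cdot)^T,$$
in the sign convention fixed by $\delta=-\mathrm{div}$ and $\SF=(\delta^*\nu)^T$.
\begin{itemize}
\item The right-hand side is $-\Ric(\nu,\cdot)^T$. Its variation uses $D\Ric(h)=(\mu+\lambda/2)h$ together with $(\nu^\sharp)'=-(h\cdot\nu)^T-\frac12h(\nu,\nu)\nu^\sharp$. The $\mu$-terms cancel against the variation of $\mu g(\nu,\cdot)^T$, leaving a multiple of $\lambda(h\cdot\nu)^T$.
\item On the left, $\delta_{g^T}$ varies through $h^T=\varphi g^T$, by \eqref{BC1}. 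The variation of the divergence under a conformal change of $g^T$ produces terms like $(n-2)\SF\cdot\nabla^T\varphi$ and $\varphi\,\delta_{g^T}\SF$, up to the $\frac{n-3}{2}$-type constants coming from $\delta$ on $2$-tensors in dimension $n-1$.
\item The term $d\MC'(h)$ is replaced using \eqref{BC2geometric}, i.e.\ $2\MC'=-\varphi\MC$.
\end{itemize}
Regrouping everything in terms of $2\SF'-\varphi\SF$ and using the unvaried Codazzi equation (whose right-hand side vanishes for Einstein $g$) should give \eqref{hidden2}.

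For \eqref{hidden1}, I linearise the twice-traced Gauss equation
$$\scal_{g^T}=\scal_g-2\Ric_g(\nu,\nu)+\MC_g^2-|\SF_g|^2.$$
\begin{itemize}
\item On the left, since $h^T=\varphi g^T$, the conformal variation formula gives $D\scal_{g^T}(\varphi g^T)=(n-2)\Delta_{g^T}\varphi-\varphi\scal_{g^T}$.
\item On the right, $D\scal_g(h)=0$. The normal Ricci term varies as $2(\mu+\lambda/2)h(\nu,\nu)-2\mu h(\nu,\nu)=\lambda h(\nu,\nu)$, plus the $\nu'$ contributions.
\item The curvature terms vary as $(\MC^2)'=2\MC\MC'=-\varphi\MC^2$ and $(|\SF|^2)'=2\g{\SF'}{\SF}-2\varphi|\SF|^2$.
\end{itemize}
To remove the undifferentiated $\scal_{g^T}$, I substitute the Gauss equation itself, with $\Ric(\nu,\nu)=\mu$ and $\scal_g=n\mu$. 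I split $\SF$ into its trace-free part and its trace, and write $\g{2\SF'-\varphi\SF}{\SF}=\g{2\SF'-\varphi\SF}{\tf{\SF}}+\frac{\MC}{n-1}(2\MC'-\varphi\MC+\ldots)$. The trace part then dies by \eqref{BC2geometric}, leaving only the trace-free contraction that appears in \eqref{hidden1}.

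Finally, $h(\nu,\nu)=\tr_gh-(n-1)\varphi$. This converts $\lambda h(\nu,\nu)$ into the $-\frac{n-1}{n-2}\lambda\varphi$ and $-\frac{\lambda}{2(n-2)}\tr_gh$ terms after dividing by $n-2$, with the factor $\frac12$ coming from the normalisation.

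The main obstacle is the bookkeeping of the variations of $\nu$ and of the boundary divergence under $h$. In particular, I need to check that every term involving $\nabla_\nu h$ and $h\cdot\nu$ either assembles into $\SF'$ via \eqref{eq:SFvariation} or cancels. I would first verify the constants in \eqref{hidden1} on the trivial case $\SF=0$, for instance a totally geodesic boundary, before tracking the $\SF$-dependent terms.
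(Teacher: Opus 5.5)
Your route is valid and differs from the paper's main argument, but the last step of your Gauss computation is wrong as written and needs one extra observation.

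\textbf{The Codazzi half is fine.} With $h^T=\varphi g^T$ one finds $\delta'_{g^T}\SF=-\delta_{g^T}(\varphi\SF)-\frac{n-1}{2}\SF\cdot\nabla^T\varphi+\frac12\MC\,d^T\varphi$. The right-hand side $-\Ric(\nu,\cdot)^T$ varies by $-\frac{\lambda}{2}(h\cdot\nu)^T$. Using \eqref{BC2geometric} and $d^T\MC=-\delta_{g^T}\SF$, and doubling, gives exactly \eqref{hidden2}.

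\textbf{The Gauss half.} A minor point first: in $\frac{d}{dt}\,2\Ric(\nu,\nu)$, the term $-2\mu h(\nu,\nu)$ already is the whole $(\nu^\sharp)'$-contribution, so nothing further should be added. With $D_g\scal(h)=0$, as you set it, the linearised Gauss equation gives
\[(n-2)\p{\Delta_{g^T}\varphi-\mu\varphi}=-\lambda\,h(\nu,\nu)-\g{2\SF'_g(h)-\varphi\SF_g}{\tf{\SF}_g}.\]
Substituting $h(\nu,\nu)=\mathrm{tr}_gh-(n-1)\varphi$ then produces the term $-\frac{\lambda}{n-2}\mathrm{tr}_gh$, not $-\frac{\lambda}{2(n-2)}\mathrm{tr}_gh$. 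No normalisation supplies the factor $\frac12$, so your final sentence does not follow.

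The missing observation is that this term vanishes identically. Tracing your own identity $D_g\Ric(h)=(\mu+\frac{\lambda}{2})h$ gives
\[0=P_gh=\mathrm{tr}_gD_g\Ric(h)-\g{\Ric_g}{h}=\tfrac{\lambda}{2}\,\mathrm{tr}_gh.\]
Equivalently, Lemma \ref{F_properties}(3) gives $\lambda\,\mathrm{tr}_gh=\mathrm{tr}_gF_gh=-(n-2)P_gh=0$. So you can either invoke $\lambda\,\mathrm{tr}_gh=0$ to match \eqref{hidden1}. Or you can write $D_g\scal(h)$ as $\frac{\lambda}{2}\mathrm{tr}_gh$ instead of $0$, which yields the paper's combination $h(\nu,\nu)-\frac12\mathrm{tr}_gh$ exactly. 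You must state one of these.

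\textbf{Comparison with the paper.} The paper argues weakly. It pairs $F_gh=\lambda h$ with test tensors $\beta_g^*\omega=\delta_g^*\omega+\frac12\delta_g\omega\cdot g$, where $\omega$ need not vanish on $\PS$. It uses $\beta_gh=0$ to reduce $(h,\beta^*\omega)_{L^2}$ to a boundary term. It evaluates $(F_gh,\beta^*\omega)_{L^2}$ through \eqref{greens}, $F_g\circ\delta^*_g=0$, $F_g(\psi g)=-(n-2)P^*_g\psi$ and \eqref{PP*Green}. It then reads off the two constraints by taking $\omega|_{\PS}=f\nu$ or tangential. There the $\frac12\mathrm{tr}_gh$ arises from the $\frac12\delta\omega\cdot g$ part of $\beta^*\omega$. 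This reuses identities already proven and never differentiates $\nu$, $\Ric(\nu,\cdot)$ or $\delta_{g^T}$. On the other hand, it needs the Bianchi gauge and rests on the lengthy identity \eqref{greens}.

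Your route is the linearisation of the Hamiltonian and momentum constraints. The paper carries this out only in the remark after the proposition, and only for $\lambda=0$. Your identity $D_g\Ric(h)=(\mu+\frac{\lambda}{2})h$ extends it to all $\lambda$. It shows the $\lambda$-terms come solely from the normal components of $D_g\Ric(h)-\mu h$. It also never uses $\beta_gh=0$.
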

\begin{proof}
    Consider a test tensor $\beta^* \omega=\delta^*\omega+\frac{1}{2}\delta \omega \cdot g$ for some $\omega\in \Omega^1(M)$, not necessarily vanishing at the boundary. Then, since $Fh=\lambda h$ and $\beta h=0$,
    \begin{equation}\label{eq:constrproof1}\begin{split}
    \p{Fh,\beta^*\omega}_{L^2}&=\lambda\p{h,\beta^*\omega}_{L^2}\\&=\lambda\p{\beta h,\omega}_{L^2}+\lambda\int_{\partial M}\g{h}{\nu\otimes \omega}-\frac{1}{2}\tr h\g{\nu}{\omega}\; dA\\ &=\lambda\int_{\partial M}\g{h}{\nu\otimes \omega}-\frac{1}{2}\tr h\g{\nu}{\omega}\; dA.\end{split}\end{equation}
    An alternative way to evaluate $\p{Fh,\beta^*\omega}_{L^2}$, is to apply the Green identity \eqref{greens}. This yields
    \begin{equation}\label{eq:constrproof}\p{Fh,\beta^*\omega}_{L^2}=\p{h,F\beta^*\omega}_{L^2}-\int_{\partial M}\g{2\SF'(h)-\varphi \SF}{\overset{\circ}{\beta^*\omega}\!\:^T}+(n-2)\varphi\p{2\MC'(\beta^*\omega)+\g{\beta^*\omega^T}{\SF}}\; dA,\end{equation}
    where the remaining terms vanish as $h\in T_g\mathcal{M}_B$. From Lemma \ref{F_properties}, points (\textit{1.}) and (\textit{4.}), we find that 
    $$F\beta^*\omega=F\p{\delta^*\omega+\frac{1}{2}\delta\omega\cdot g}=\frac{1}{2}F(\delta \omega \cdot g)=-\frac{n-2}{2}P^*\delta \omega.$$
    This allows us to apply the Green identity \eqref{PP*Green} for $P$/$P^*$, which yields
    \begin{align*}\p{h,F\beta^*\omega}_{L^2}&=-\frac{n-2}{2}\p{h,P^*\delta\omega}_{L^2}\\ &=-\frac{n-2}{2}\p{Ph,\delta \omega}_{L^2}+\frac{n-2}{2}\int_{\PS}\trgt h\;\nu\p{\delta \omega}\; dA\\ &=\frac{(n-2)(n-1)}{2}\int_{\PS}\varphi\;\nu\p{\delta \omega}\; dA.\end{align*}
    This term has the effect of cancelling the only contribution from $\delta\omega\cdot g$ in the boundary integral in \eqref{eq:constrproof}. That is, since $\overset{\circ}{\beta^*\omega}\!\:^T=\overset{\circ}{\delta^*\omega}\!\:^T$ and 
    $$2\MC'\p{\beta^*\omega}+\g{\beta^*\omega}{\SF}=2\MC'\p{\delta^*\omega}+\g{\delta^*\omega}{\SF}+(n-1)\nu\p{\delta \omega},$$
    we have 
    \begin{align}\label{eq:constrproof2}
        \p{Fh,\beta^*\omega}_{L^2}=-\int_{\partial M}\Big\langle2\SF'(h)-\varphi \SF,\overset{\circ}{\delta^*\omega}\!\:^T\Big\rangle+(n-2)\varphi\p{2\MC'(\delta^*\omega)+\g{\delta^*\omega^T}{\SF}}\; dA.
    \end{align}
    Suppose now that $\omega|_{\PS}=f\nu$, for some $f\in C^\infty(\partial M)$, then $\delta^*\omega^T=f\SF$ and $2\MC'(\delta^*\omega)+\big\langle\delta^*\omega^T,\SF\big\rangle=\Delta_{g^T}f-\mu f$. By comparing \eqref{eq:constrproof1} and \eqref{eq:constrproof2}, we see that 
    \begin{equation*}
        \begin{split}
            \lambda\int_{\PS} f\p{h(\nu,\nu)-\frac{1}{2}\tr h}\; dA&=-\int_{\partial M}\gBig{2\SF'(h)-\varphi\SF}{\tf{\SF}}f+(n-2)\varphi\p{\Delta_{g^T}f-\mu f}\; dA\\ &=-\int_{\partial M}\gBig{2\SF'(h)-\varphi\SF}{\tf{\SF}}f+(n-2)\p{\Delta_{g^T}\varphi-\mu \varphi}f\; dA.
        \end{split}
    \end{equation*}
    Since $f$ is arbitrarily chosen, we must in fact have \eqref{hidden1} pointwise at $\partial M$.\\

    Using a second test tensor, $\beta^*\omega$ with $\omega|_{\partial M}=\omega^T$, the same comparison gives
    \begin{align*}
        \lambda\int_{\PS}\gBig{(h\cdot \nu)^T}{\omega}\; dA=&-\int_{\partial M}\gBig{2\SF'(h)-\varphi\SF}{\delta_{g^T}^*\omega}+(n-2)\varphi\p{\g{d^T \MC}{\omega}+\g{\SF}{\delta_{g^T}^*\omega}}\; dA\\ =&-\int_{\partial M}\gBig{\delta_{g^T}\p{2\SF'(h)-\varphi\SF}-(n-2)\SF\cdot \nabla^T\varphi}{\omega}  \; dA,  \end{align*}
where we have applied the momentum constraint equation $d^T \MC=-\delta_{g^T}\SF$. \eqref{hidden2} now follows.
    \end{proof}

It might not be immediately clear, how these equations may be used in practice. And indeed, we shall only have use for equation \eqref{hidden1} in the present paper (at the end of Section \ref{sec:SADS}). However, they serve to remind us that Einstein metrics carry additional structure on the boundary, the Einstein constraints, that impacts the eigenvalue problem.

\begin{remark} We refer to the \eqref{hidden1} and \eqref{hidden2} as constraint conditions, due to the fact that they represent the interplay between the boundary conditions and the Einstein constraint equations. Specifically, when $\lambda=0$, \eqref{hidden1} can be derived directly from the Hamiltonian constraint (contracted Gauss equation):
    $$\scal_{g^T}=(n-2)\mu+\MC^2-\abs{\SF}^2.$$
    By linearising each side in the direction of $h\in T_g\mathcal{M}_B$, we obtain
    \begin{align*}D_{g^T}\scal\!\big(\varphi g^T\big)&=2\MC\MC'(h)-2\g{\SF'(h)}{\SF}+\gbig{h^T}{\SF\times \SF}\\ &=\varphi\p{\abs{\SF}^2-\MC^2}-\g{2\SF'(h)-\varphi\SF}{\SF}.\end{align*}
    The left hand side can also be calculated, using standard variational formula for the scalar curvature in a conformal direction:
    \begin{align*}D_{g^T}\scal\!\pbig{\varphi g^T}&=(n-2)\Delta_{g^T}\varphi-\scal_{g^T}\varphi\\&=(n-2)\pbig{\Delta_{g^T}-\mu}\varphi+\p{\abs{\SF}^2-\MC^2}\varphi.\end{align*}
    Comparing the two formulae above and using $\trgt\p{2\SF'(h)-\varphi\SF}=0$, yields the $\lambda=0$ version of \eqref{hidden1}.\\

   Similarly, one obtains \eqref{hidden2} with $\lambda=0$ by linearising the momentum constraint (contracted Codazzi equation):
    \begin{align*}
        0&=\p{\delta_{g^T}\SF+d^T\MC }'(h)\\&=\delta_{g^T}'\SF+\delta_{g^T}\SF'(h)+d^T\MC'(h).
    \end{align*}
    If $h\in T_g\mathcal{M}_B$, we have 
    \begin{align*}d^T\MC'(h)&=-\frac{1}{2}d^T\p{\varphi\MC}\\&=-\frac{1}{2}\MC d^T\varphi+\frac{1}{2}\varphi\delta_{g^T}\SF\\&=-\frac{1}{2}\MC d^T\varphi+\frac{1}{2}\delta_{g^T}\p{\varphi \SF}+\frac{1}{2}\SF\cdot \nabla^T \varphi.\end{align*}
    One may show that linearising the divergence yields
    \begin{align*}
        \delta_{g^T}'\SF=-\delta_{g^T}\p{\varphi\SF}-\frac{n-1}{2}\SF\cdot \nabla^T \varphi+\frac{1}{2}\MC d^T\varphi.
    \end{align*}
    Combining the three preceding equations, we obtain 
    \begin{align*}
    0=\delta_{g^T}\SF'(h)-\frac{1}{2}\delta_{g^T}\p{\varphi\SF}-\frac{n-2}{2}\SF\cdot \nabla^T\varphi,
    \end{align*}
    which is precisely \eqref{hidden2} with $\lambda=0$. 
\end{remark}

\subsection{Stability}\label{sec:stab}
The decompositions of Proposition \ref{prop:split} are not fine enough for the purposes of stability. The main problem is that the kernel of $F_g$ contains the tangent space of the orbit of $g$ under the diffeomorphism group, $\Imag \delta^*_g$, cf. Lemma \ref{F_properties}. Though the Einstein equations are invariant under the entire diffeomorphism group, it turns out to be enough to discard the tangent space to the orbit of diffeomorphisms fixing the boundary. 

\begin{proposition}\label{bianchisplit} Let $(M,g)$ be an Einstein manifold with boundary, such that $\scal_g/(n-1)\notin \sigma_N(\Delta)$. Then the tangent space splits as
    \begin{equation}\label{stability_splitting}
        T_g\mathcal{M}_B=C_{N}^\infty(M)\cdot g\oplus \p{\ker_g P_g\cap \ker_g \beta_g}\oplus\Imag \delta^*_g|_{\Omega_0},
    \end{equation}
    according to which $F_g$ splits as 
    $$F_g=\p{-(n-2)P^*_g,\Delta_E,0}.$$
    The splitting \eqref{stability_splitting} is orthogonal with respect to $F_g$.
\end{proposition}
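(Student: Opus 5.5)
Starting from \eqref{Esplitting} of Proposition \ref{prop:split}, which already gives $T_g\mathcal{M}_B=C^\infty_N(M)\cdot g\oplus\ker_gP_g$, it remains to split $\ker_gP_g$ further as $\p{\ker_gP_g\cap\ker_g\beta_g}\oplus\Imag\delta^*_g|_{\Omega_0}$. The inclusion $\Imag\delta^*|_{\Omega_0}\subset\ker_gP_g\cap T_g\mathcal{M}_B$ is immediate. By Lemma \ref{properties}(2) we have $P\circ\delta^*=0$. If $\omega|_{\PS}=0$, then $\delta^*\omega=\frac12\Lie_{\omega^\sharp}g$ is generated by a flow fixing the boundary pointwise. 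That flow preserves $\mathcal{M}_B$, so $\delta^*\omega\in T_g\mathcal{M}_B$. Its total trace is $-\int_M\delta\omega\,dV=0$ by the divergence theorem, since $\omega$ vanishes on $\PS$.

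Given $k\in\ker_gP_g$, I would solve for $\omega\in\Omega_0$ with $\beta_g\delta^*_g\omega=\beta_gk$ and set $k_0:=k-\delta^*\omega$. By the Ricci identity already used in the proof of Lemma \ref{F_properties}(1), $2\beta\delta^*\omega=\nabla^*\nabla\omega-\mu\omega$. So I need to solve the Dirichlet problem
$$\p{\nabla^*\nabla-\mu}\omega=2\beta_gk\ \text{in }M,\qquad \omega|_{\PS}=0.$$
This is the main obstacle: the operator is elliptic and self-adjoint with Dirichlet data, so by the Fredholm alternative it is solvable whenever it has trivial kernel. When $\mu\le0$ injectivity follows by integrating by parts. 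For $\mu>0$ I would argue as follows. Pair with $\omega$ to get $\norm{\nabla\omega}^2=\mu\norm{\omega}^2$. Then use $\norm{\nabla\omega}^2\ge\norm{\delta^*\omega}^2$ together with the Bochner/Reilly-type identity for Killing-like fields vanishing on the boundary, which would force $\omega$ to be Killing with $\omega|_{\PS}=0$, hence $\omega=0$. If this fails in general, I would add it as a genericity assumption, analogous to $\scal_g/(n-1)\notin\sigma_N(\Delta)$, or absorb a finite-dimensional kernel.

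With $\omega$ in hand, $k_0\in\ker_gP\cap\ker\beta$. Its total trace also vanishes, since both $k$ and $\delta^*\omega$ have zero total trace. For directness, suppose $\delta^*\omega\in\ker\beta$ with $\omega\in\Omega_0$. Then $\omega$ lies in the Dirichlet kernel above, so $\omega=0$. Directness of the first summand is already contained in Proposition \ref{prop:split}.

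Finally, the action of $F_g$. On $\psi g$, Lemma \ref{F_properties}(4) gives $F(\psi g)=-(n-2)P^*\psi$. On $\Imag\delta^*|_{\Omega_0}$, Lemma \ref{F_properties}(1) gives $F=0$. On $k_0\in\ker P\cap\ker\beta$, the definition of $F_g$ gives $Fk_0=\Delta_Ek_0-2\delta^*\beta k_0-Pk_0\cdot g=\Delta_Ek_0$.

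For $F$-orthogonality, I would check three pairings, all with members of $T_g\mathcal{M}_B$ so that $F$ is self-adjoint by \eqref{greens}:
\begin{itemize}
\item $(F\delta^*\omega,\cdot)=0$, and by self-adjointness $(\cdot,F\delta^*\omega)=0$ as well.
\item $(F(\psi g),k_0)=-(n-2)(P^*\psi,k_0)=-(n-2)(\psi,Pk_0)=0$, using the Green identity \eqref{PP*Green} with $\psi\in C^\infty_N$ and $k_0\in T_g\mathcal{M}_B$. This is the orthogonality statement already recorded in Lemma \ref{F_properties}.
\item For constants $\psi=c$, the same identity applies, since constants are Neumann.
\end{itemize}
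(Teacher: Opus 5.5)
\textbf{The gap.} The step that fails is the injectivity of $\nabla^*\nabla-\mu$ on $\Omega_0$ when $\mu>0$. No Bochner/Reilly argument can supply it. For $\omega\in\Omega_0$ all boundary terms drop out and
\[
\p{2\beta_g\delta^*_g\omega,\omega}_{L^2}=2\norm{\delta^*_g\omega}_{L^2}^2-\norm{\delta_g\omega}_{L^2}^2 .
\]
So a kernel element only satisfies $2\norm{\delta^*\omega}_{L^2}^2=\norm{\delta\omega}_{L^2}^2$, equivalently $\norm{\nabla\omega}^2_{L^2}=\mu\norm{\omega}^2_{L^2}$. The trace part $\frac12 d\tr$ of $\beta_g$ enters with the wrong sign, and nothing forces $\omega$ to be Killing. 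Your reasoning is the one that works for the divergence gauge $\delta h=0$, where the quadratic form is just $\norm{\delta^*\omega}^2_{L^2}$.

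In fact the kernel can be non-trivial. Suppose $\psi\in C^\infty_N(M)$ is constant on $\PS$ and $\Delta\psi=2\mu\psi$. Then $d\psi\in\Omega_0$, and since $\nabla^*\nabla\, d\psi=d\Delta\psi-\mu\, d\psi$, you get $2\beta\delta^*d\psi=d\p{\Delta\psi-2\mu\psi}=0$. Hence $\nabla^2\psi=\delta^*d\psi\neq0$ lies in both $\ker_gP_g\cap\ker_g\beta_g$ and $\Imag\delta^*_g|_{\Omega_0}$, so the sum is not direct. Such $\psi$ exist on round caps $\tb{\theta\leq\theta_0}\subset\mathbb{S}^n$ with $n\geq3$, where $\mu=n-1$:
\begin{itemize}
\item the lowest non-constant radial Neumann eigenvalue is $2(n+1)$ at $\theta_0=\pi/2$;
\item it drops below $2(n-1)$ as $\theta_0\to\pi$: test with $\cos\theta$ minus its mean, whose Rayleigh quotient tends to $n<2(n-1)$;
\item by continuity it therefore equals $2\mu$ for some $\theta_0\in(\pi/2,\pi)$, and the corresponding radial eigenfunction is automatically constant on the boundary.
\end{itemize}
The hypothesis $\scal_g/(n-1)\notin\sigma_N(\Delta)$ constrains the eigenvalue $\frac{n\mu}{n-1}$, not $2\mu$, so it does not exclude this.

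\textbf{Comparison with the paper.} The paper does not prove this step itself. It imports $T_g\mathcal{M}=\ker\beta_g\oplus\Imag\delta^*_g|_{\Omega_0}$ from Lemma 2.2 of \cite{An1}. It then reads off the action of $F_g$ and the $F_g$-orthogonality from Lemma \ref{F_properties}, exactly as you do. Those parts of your proposal are correct: the inclusion $\Imag\delta^*|_{\Omega_0}\subset\ker_gP_g\cap T_g\mathcal{M}_B$, the reduction to a Dirichlet problem, the case $\mu\leq0$, the three components of $F_g$, and the orthogonality via \eqref{greens} and \eqref{PP*Green}.

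So your route is the paper's route with the cited lemma unpacked. The unpacking shows that for $\mu>0$ an extra non-degeneracy hypothesis is needed: $\mu$ must not be a Dirichlet eigenvalue of $\nabla^*\nabla$ on one-forms. Your fallback of assuming it is the correct repair. Absorbing a finite-dimensional kernel would not produce a direct sum with the stated summands. By the example above, the splitting as quoted from \cite{An1} also cannot hold for every Einstein metric with $\mu>0$. This hypothesis should therefore be regarded as implicit in the proposition.
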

\begin{proof}
    The splitting of the tangent space follows directly from Proposition \ref{prop:split} and \cite[Lemma 2.2]{An1}: If $g$ is Einstein, then 
    $$T_g\mathcal{M}=\ker \beta\oplus \Imag \delta^*|_{\Omega_0}.$$
    The first and third factor in the splitting of $F$ is a consequence of Lemma \ref{F_properties}. The second factor is clear from the definition of $F$:
    $$Fh=\Delta_E h-2\delta^*\beta h-Ph\cdot g.$$
    Orthogonality follows from Lemma \ref{F_properties}.
\end{proof}

As in the closed case, the most interesting space is the one on which $F_g$ reduces to the Einstein operator $\Delta_E$. In our decomposition, this space is $\ker_gP_g\cap \ker_g\beta_g$. In analogy with the usual stability space, $TT_g$, we set
$$TV_g:=\ker_gP_g\cap \ker_g\beta_g.$$
Note that $TT_g$ is a subspace of $TV_g$.\\

Before proceeding, we shall take the time to show this constellation of operator, space and boundary conditions is suitable for spectral analysis.  

\begin{proposition}\label{prop:fredholm} Let $(M,g)$ be a compact Einstein manifold with boundary, and set 
$$Bh:=(B_1h,B_2h)=\p{h^T-\varphi g^T,2\MC'(h)+\varphi \MC},$$
viewed as a map from sections of $S^2M$ to sections of $S^2\partial M\times \partial M$. Define 
$$K^m:=\tb{h\in H^m\p{S^2M}\; :\;Ph=\beta h=0  }$$
and $K^m_B:=K^m\cap \ker B$. Then the operator 
    $$\Delta_E:K^{m+2}_B\to K^m$$ 
    is Fredholm and has discrete $L^2$-spectrum.
\end{proposition}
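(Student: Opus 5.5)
Since $K^m$ is cut out by the differential constraints $Ph=0$ and $\beta h=0$, I would not work on it directly. Instead I would embed $\Delta_E$ in an elliptic boundary value problem on all of $H^{m+2}(S^2M)$ and recover $K^m$ at the end.

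The natural candidate is the gauged operator
$$L h:=F_gh+2\delta^*\beta h+P h\cdot g=\Delta_E h,$$
and on $K^{m+2}$ it coincides with $F_g$. The boundary data for $\Delta_E$ consist of $B_1$ (the trace-free tangential part, $\tfrac{(n-1)n}{2}-1$ Dirichlet conditions), $B_2$ (one Neumann-type condition), and in addition the gauge conditions $\beta h=0$ on $\partial M$ ($n$ first-order conditions). This gives $\tfrac{n(n+1)}{2}$ conditions in total, which is the correct count for a second-order system on $S^2M$.

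The core step is to check the Lopatinskii--Shapiro (complementing) condition for
$$\p{\Delta_E;\; B_1,\; B_2,\; \beta|_{\partial M}}.$$
I would freeze coefficients at a boundary point and pass to the half-space model. There the leading symbol of $\Delta_E$ is $|\xi|^2+ \tau^2$ times the identity, so the decaying solutions are $h=h_0e^{-|\xi|t}$ with $h_0$ an arbitrary constant symmetric matrix. I would then show that the principal parts of the boundary operators annihilate only $h_0=0$.

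For this I would split $h_0$ into its components $h^T$, $h\cdot\nu$ and $h(\nu,\nu)$, then decompose the tangential pieces relative to $\xi$ into longitudinal and transverse parts. The principal parts are:
\begin{itemize}
\item $B_1$ kills $\tf{h}^T$ pointwise;
\item the principal part of $B_2$ is $\tfrac12\nu(\tr_{g^T}h)+\delta_{g^T}(h\cdot\nu)$;
\item the principal part of $\beta$ involves $\nu$-derivatives of $h\cdot\nu$ and of $h(\nu,\nu)$, combined with tangential divergences.
\end{itemize}
This yields a small linear system in the scalars $\varphi$, $h(\nu,\nu)$, $\xi\cdot(h\cdot\nu)$ and the transverse part of $h\cdot\nu$, which I expect to be nondegenerate for $\xi\neq0$. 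This is essentially the computation behind Anderson's boundary data in \cite{An1}, and I would follow that reference. I expect this step to be the main obstacle: one must track the $\sqrt{-1}\xi$ versus $|\xi|$ factors carefully, and the Neumann-type condition $B_2$ is exactly where degeneracy could occur, for instance in low dimension.

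With the complementing condition established, standard elliptic theory (Agmon--Douglis--Nirenberg) gives that
$$h\mapsto\p{\Delta_Eh,\;Bh,\;\beta h|_{\partial M}}$$
is Fredholm from $H^{m+2}$ to the appropriate product of Sobolev spaces on $M$ and its boundary. The Green identity \eqref{greens}, together with \eqref{PP*Green}, shows that this problem is formally self-adjoint. Hence its index is $0$, and its $L^2$ realisation has compact resolvent by Rellich, so the spectrum is discrete.

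It remains to pass to $K^m$ by showing the constraints propagate. First, if $Ph=\beta h=0$, then $\Delta_E h=F_gh$. By Lemma \ref{F_properties}(2) this is divergence-free, and by (3) it is trace-free, so $\beta\Delta_Eh=0$. Moreover, Lemma \ref{F_properties}(5) gives $P\Delta_Eh=(\Delta-2\mu-\tr P^*)Ph=0$. Hence $\Delta_E$ maps $K^{m+2}_B$ into $K^m$.

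Second, $K^{m+2}_B$ is the intersection of the Fredholm domain $\{B=0,\ \beta|_{\partial M}=0\}$ with the closed subspace $\{Ph=\beta h=0\}$, and $K^m$ is closed in $H^m$. Restricted to this pair the operator still has closed range. Its kernel is finite-dimensional, being a subspace of the full kernel. For the cokernel, given $f\in K^m$ I would solve $\Delta_Eh=f$ in the full problem and show that the solution automatically satisfies $\beta h=0$ and $Ph=0$. For $\beta h$: applying $\beta$ to $\Delta_Eh=f$ and commuting gives an equation of the form $(\Delta_H-2\mu)\beta h=0$ with Dirichlet data $\beta h|_{\partial M}=0$, via $2\beta\delta^*=\nabla^*\nabla-\mu$. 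Then $Ph$ satisfies a Laplace-type equation with boundary data from $B_2$. Both problems are uniquely solvable away from a finite-dimensional exceptional set, so the cokernel of the restricted operator is finite-dimensional. This argument, together with the handling of the exceptional set, mirrors the corresponding result in \cite{An1}.

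Self-adjointness on $K^{m+2}_B$, which follows from \eqref{greens}, then upgrades compactness of the resolvent to discreteness of the $L^2$-spectrum on $K$.
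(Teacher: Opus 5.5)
\textbf{The step that fails is the propagation of $Ph=0$.} After solving the unconstrained problem $(\Delta_E;B_1,B_2,\beta|_{\partial M})$, you assert that $Ph$ satisfies a Laplace-type equation ``with boundary data from $B_2$''. It does satisfy $(\Delta-2\mu)Ph=Pf=0$ in $M$. However, none of $B_1,B_2,\beta|_{\partial M}$ controls second normal derivatives of $h$, so there is no boundary condition on $Ph$; the paper makes the same observation after Theorem \ref{MB_cor}.

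You can see exactly what happens. Once $\beta h=0$ you have $\delta\delta h=-\frac12\Delta\tr h$, hence $2Ph=(\Delta-2\mu)\tr h=\tr\Delta_E h=\tr f$. So the constraint propagates if and only if $\tr f=0$.

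This obstruction lies in the statement, not only in your method. As you observe yourself, $\Delta_E h=F_gh$ is trace-free for every $h\in K^{m+2}_B$. On the other hand, $K^m$ contains an infinite-dimensional family of tensors with nonzero trace. When $\mu\neq0$, say, take any $u\in H^{m+2}$ with $(\Delta-2\mu)u=0$, with no boundary condition on $u$. Then $f=\nabla^2u=\delta^*du$ lies in $K^m$ by Lemma \ref{properties}(2) and $2\beta\delta^*du=d(\Delta-2\mu)u$, and $\tr f=-2\mu u$. So $\Delta_E:K^{m+2}_B\to K^m$ has infinite-dimensional cokernel.

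The same caveat applies to the paper's proof, which avoids constraint propagation altogether. It gets finite kernel and closed range from the ADN estimate applied directly on $K^{m+2}_B$. It then infers $\dim\mathrm{coker}=\dim\ker$ from the symmetry of $\Delta_E=F_g$ on $K_B$ given by \eqref{greens}, and that inference can only hold with the trace-free part of $K^m$ as target. For that smaller target your route does work: solve the full problem, remove $\beta h$ modulo the finite-dimensional Dirichlet kernel of $\nabla^*\nabla-\mu$ on one-forms, and then $Ph=\frac12\tr f=0$.

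\textbf{Further problems.} Your claim that the full problem is formally self-adjoint is false. The identity \eqref{greens} concerns $F_g$. For $h,k$ with $Bh=Bk=0$ and $\beta h=\beta k=0$ on $\partial M$, the difference $\Delta_E-F_g=2\delta^*\beta+P\cdot g$ contributes $\int_{\partial M}\tr k\,\delta_{g^T}(h\cdot\nu)-\tr h\,\delta_{g^T}(k\cdot\nu)\,dA$ to $(\Delta_Eh,k)_{L^2}-(h,\Delta_Ek)_{L^2}$. This is nonzero already on the flat slab $T^{n-1}\times[0,1]$.

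So ``index $0$'' and the compact resolvent of the full problem are unsupported. A finite cokernel of the full problem, which is all you actually need, follows from Fredholmness alone. Your discreteness step, however, needs $\Delta_E-\lambda:K^2_B\to K^0$ to be invertible for some $\lambda$. The trace obstruction rules this out too, since $\Delta_E-\lambda$ is a compact perturbation of $\Delta_E$ and so is not Fredholm either.

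Finally, you leave the complementing condition as an expectation; this is where the paper does its concrete work, and the computation is short. For $h=a\,e^{-|\xi|y}$, $B_2$ and the normal Bianchi component give $a_{nn}=0$. $B_1$ and the tangential components then give $|\xi|a_{\alpha n}+\frac{n-3}{2(n-1)}i\xi_\alpha a^{\delta}{}_{\delta}=0$. Contracting with $\xi^\alpha$ and using $B_2$ yields $\frac{n-2}{n-1}i|\xi|a^{\delta}{}_{\delta}=0$, so $a=0$ for $n\geq3$. On such exponential solutions, imposing $\beta h=0$ at $y=0$, as you do, is equivalent to imposing it for all $y$, as the paper does.
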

\begin{proof}
   To prove the proposition, one must first show that $\p{\Delta_E,\beta,B}$ defines a regular elliptic boundary value problem. This property is sometimes called the Shapiro--Lopatinsky condition, or the complementarity condition. For this triplet, the proof is essentially the same as the proof of \cite[Proposition 5.1]{RJJ}. For a more physical point of view of this process, we refer the reader to Section 2.2 of \cite{LSW}.  \\

   Given a normal coordinate neighbourhood $U$ of $p\in \partial M$, with $p$ mapped to the origin in $\R^n_+=\tb{x_n\geq 0}$. The method of "freezing coefficients" at $p$, means we consider the ordinary differential operators arising from the principal symbols, obtained by Fourier transforming only the tangential derivatives:
   \begin{align}
       \label{sigma_Delta_E}\sigma\p{\Delta_E}\p{h}_{ij}&=-\p{\frac{d^2}{d y^2}-\abs{\xi}^2}h_{ij},\\
       \label{symbolbetaalpha}\sigma(\beta)\p{h}_\alpha&=-\p{\frac{d}{dy}h_{n\alpha}+i\xi^\gamma h_{\gamma\alpha}-\frac{1}2i \xi_\alpha h^{j}_{\, j}},\\
        \label{symbolbetan}\sigma(\beta)\p{h}_n&=-\frac{1}{2}\p{\frac{d}{dy}\p{h_{nn}-h^{\alpha}_{\; \alpha}}+2i\xi^\alpha h_{\alpha n}},\\
       \label{symbolB1}\sigma(B_1)\p{h}_{\alpha\gamma}&=h_{\alpha\gamma }-(n-1)^{-1}h^{\delta}_{\; \delta},\\
       \label{symbolB2}\sigma(B_2)(h)&=\frac{d}{d y}h^{\alpha}_{\; \alpha} -2i\xi^\alpha h_{\alpha n},
   \end{align}
   where $\frac{d}{dy}=\frac{\partial}{\partial x_n}$, $\xi\in \R^{n-1}\setminus\tb{0}$, and Greek indices run from $1$ to $n-1$, while Latin indices run the gamut from $1$ to $n$. By showing that no non-trivial solution to $\sigma(\Delta_E)h(y)=\sigma(\beta)h(y)=0$ and $\sigma(B_1)h(0)=\sigma(B_2)h(0)=0$ can exists, we will have proven the the boundary value problem is well-posed as an elliptic problem. The bulk ODE \eqref{sigma_Delta_E} has bounded solutions 
   $$h_{ij}=a_{ij}(\xi)e^{-\abs{\xi}y}.$$
    Enforcing the Bianchi gauge condition \eqref{symbolbetaalpha}-\eqref{symbolbetan} on solutions of this form, forces the coefficient functions to satisfy
    \begin{align}
        \label{bianchi1}\abs{\xi}a_{\alpha n}-i\xi^\gamma a_{\gamma \alpha}+\frac{1}{2}i\xi_{\alpha}a^j_{\;j}&=0,\\
        \label{bianchi2}\frac{1}{2}\abs{\xi}\p{a_{nn}-a^\alpha_{\;\alpha}}-i\xi^\alpha h_{\alpha n}&=0.
    \end{align}
    We now impose the boundary conditions at $y=x_n=0$
    \begin{align}
        \label{sigmaB1}a_{\alpha\gamma}-(n-1)^{-1}a^{\delta}_{\; \delta}&=0,\\ \label{sigmaB2}\abs{\xi}a^{\alpha}_{\; \alpha}+2i\xi^\alpha a_{\alpha n}&=0.
    \end{align}
    Combining \eqref{bianchi2} and \eqref{sigmaB2}, we immediately see that $a_{nn}=0$. This means we can remove this term from the sum in \eqref{bianchi1}, and combine the equation with \eqref{sigmaB1} to obtain
    $$\abs{\xi}a_{\alpha n}+\frac{n-3}{2(n-1)}i\xi_{\alpha}a^{\delta}_{\; \delta}=0.$$
    By multiplying this with $\xi_\alpha/\abs{\xi}$ and summing over $\alpha$, we are left with 
    $$\xi^\alpha a_{\alpha n}+\frac{n-3}{2(n-1)}i\abs{\xi}a^{\delta}_{\; \delta}=0.$$
    This can easily be combined with \eqref{sigmaB1} and \eqref{sigmaB2} to show that 
    $$0=\xi^\alpha a_{\alpha n}+\frac{n-3}{2(n-1)}i\abs{\xi}a^{\delta}_{\; \delta}=\frac{n-2}{n-1}i\abs{\xi}a^{\delta}_{\; \delta}=(n-2)i\abs{\xi}a_{\alpha\gamma}.$$
    At this point only the mixed components $a_{\alpha n}$ can be non-trivial, but one can use \eqref{bianchi1} or \eqref{sigmaB2} (and other besides) to conclude that even these components vanish. Thus the triplet $\p{\Delta_E,\beta,B}$ satisfies the complementarity condition. \\

    From ellipticity of the pair $(\Delta_E,\beta)$ in the interior, one obtains an Agmon-Douglis-Nirenberg elliptic regularity estimate \cite[Theorem 10.5]{ADN}:
    $$\norm{h}_{H^{m+2}(M)}\leq C\p{\norm{\Delta_E h}_{H^m(M)}+\norm{\beta h}_{H^{m+1}(M)}+\norm{B_1h}_{H^{m+2-\frac{1}{2}}\p{\PS}}+\norm{B_2h}_{H^{m+1-\frac{1}{2}}(\PS)}+\norm{h}_{L^2(M)}}.$$
    On the space $K^{m+2}_B$, the estimate reduces to 
    \begin{equation}\label{ADNineq}\norm{h}_{H^{m+2}(M)}\leq C\p{\norm{\Delta_E h}_{H^m(M)}+\norm{h}_{L^2(M)}}.\end{equation}
    From this estimate, a standard argument shows that $\Delta_E$ has finite dimensional kernel and closed range. The argument goes as follows: Suppose $\ker\Delta_E\subset K_B^{m+2}$ is infinite dimensional, then we may choose a sequence $\tb{h_j}_{j}\subset K_B^{m+2}$ such that $\norm{h_j}_{H^{m+2}}=1$ for all $j$, but $h_j\rightharpoonup 0$ in $H^{m+2}$. By the Sobolev trace theorem, 
    $$B:H^{m+2}(S^2M)\longrightarrow H^{m+\frac{3}{2}}(S^2\partial M)\times  H^{m+\frac{1}2}(\PS)$$ is continuous for every $m\geq 0$. Hence $K_{B}^{m+2}$ is closed in $H^{m+2}$, and by Rellich--Kondrachov it embeds compactly into $ L^2$. Consequently, after passing to a subsequence, we can assume $h_j\rightarrow 0$ in $L^2$. However, this contradicts the ADN estimate \eqref{ADNineq}, as
    $$1=\norm{h_j}_{H^{m+2}}\leq C\norm{h_j}_{L^2}\longrightarrow 0.$$
    
    After concluding $\dim \ker \Delta_E<\infty$, we consider its $L^2$-orthogonal complement: $\p{\ker \Delta_E}^{\perp_{L^2}}\subset K^{m+2}_B$. We claim that on this subspace, the ADN estimate can be refined to 
    \begin{equation}\label{ADNrefinement}
        \norm{h}_{H^{m+2}}\leq C'\norm{\Delta_E h}_{H^m},\qquad \qquad \forall h\in \p{\ker \Delta_E}^{\perp_{L^2}},
    \end{equation}
    for some $C'>0$. Again we assume the converse, that no such $C'$ exists. For every $j\in \mathbb{N}$, we can find a $h_j\in \p{\ker \Delta_E}^{\perp_{L^2}}$ such that 
    $$\norm{h_j}_{H^{m+2}}>j\norm{\Delta_E h_j}_{H^m}, $$
    or, if we normalise $h_j$:
    $$\norm{\Delta_E h_j}_{H^m}< \frac{1}{j}.$$ 
    By possibly passing to a subsequence, we can assume that $h_j\rightharpoonup h\in K_B^{m+2}$. As the inclusion $K_B^{m+2}\hookrightarrow L^2$ is continuous, we even have $h_j\rightharpoonup h$ with respect to the $L^2$-inner product. Specifically, for every $k\in \ker\Delta_E\subset K_B^{m+2}$, 
    $$0=\lim_{j\to \infty}\p{h_j,k}_{L^2}=\p{h,k}_{L^2}.$$
    Meanwhile, the continuity of $\Delta_E$ implies $\Delta_E h=\lim_{j\to \infty}\Delta_E h_j=0$. Combining the preceding two statements, we find that $\norm{h}_{L^2}^2=0$. At this moment, we have a weakly convergent sequence $h_j\rightharpoonup 0$ in $K_B^{m+2}$, and may pass to a strongly convergent subsequence $h_j\rightarrow 0$ in $L^2$. This now contradicts the ADN estimate \eqref{ADNineq}:
    $$1=\norm{h_j}_{H^{m+2}}\leq C\p{\norm{\Delta_E h_j}_{H^m}+\norm{h_j}_{L^2}}\longrightarrow 0.$$
    This proves that \eqref{ADNrefinement} holds. Suppose now that we have a sequence $\big\{h'_j\big\}_j\subset K_B^{m+2}$ such that $\Delta_Eh'_j\to k\in H^m$. We may assume that $h'_j\in \p{\ker \Delta_E}^{\perp_{L^2}}$, so that \eqref{ADNrefinement} is valid for all $j$. Then 
    $$\norm{h'_j-h'_i}_{H^{m+2}}\leq C'\norm{\Delta_Eh'_j-\Delta_Eh'_i}_{H^m},$$
    which means $h'_j$ is Cauchy in $H^{m+2}$. Since $K_B^{m+2}$ is closed, it contains the limit $\lim_{j\to \infty} h'_j=h'$, for which $\Delta_E h'=k$. Thereby we have proved that $\Delta_E$ has closed range. As $\Delta_E$ coincides with $F_g$ on $K^{m+2}_B$, the Green identity \eqref{greens} shows that $\Delta_E$ is self-adjoint on this space. As a self-adjoint operator with finite dimensional kernel, $\dim\mathrm{coker}(\Delta_E)=\dim\ker(\Delta_E)<\infty$. Thus $\Delta_E:K^{m+2}_B\to K^m$ is Fredholm of index $0$. The fact that $\Delta_E$ maps $K^{m+2}_B$ to $K^m$ is a result of the fifth identity of Lemma \ref{F_properties}, and the commutation relation $\beta\circ \Delta_E=\p{\Delta_H-2\mu}\circ \beta $. \\
    
    As we noted earlier, $K^2_B$ embeds compactly in $K^0\subset L^2$, and as a consequence $\Delta_E$ has compact resolvent 
    $$\p{\Delta_E-\lambda}^{-1}:K^0\to K^0, \qquad \qquad \lambda\in \rho\p{\Delta_E}. $$ 
    Now the spectral theorem for compact operators implies that $\p{\Delta_E-\lambda}^{-1}$, and thus $\Delta_E$, has discrete spectrum with finite-dimensional eigenspaces.\end{proof}

We are finally in a position to define a notion of stability for Einstein metrics with boundary. As we need a way to distinguish between conformal stability and stability on $TV_g=\ker_gP\cap\ker_g\beta$, we shall have to find a descriptor for the later. Since it is the type of stability that most closely resembles the usual concept of mode stability, this is what we choose to call it.

\begin{definition} Let $(M,g)$ be a compact Einstein manifold with boundary. We say that: 
\begin{enumerate}
    \item $g$ is \textbf{conformally stable} (resp. \textbf{strictly conformally stable}) if $F_g$ is negative semi-definite (resp. definite) on $C_{g,N}^\infty(M)\cdot g$;
    $$\p{F_g(\psi g),\psi g}_{L^2}\leq 0\qquad \qquad \p{\text{resp. }<0}\qquad \qquad \forall \psi\in C_{g,N}^\infty(M)\setminus \tb{0}.$$
    \item $g$ is \textbf{mode stable} (resp. \textbf{strictly mode stable}) if $F_g$ is positive semi-definite (resp. definite) on $\ker_g P_g\cap \ker_g \beta_g$;
    $$\p{F_gh,h}_{L^2}\geq 0\qquad \p{\text{resp. }>0}\qquad \qquad \forall h\in \p{\ker_g P_g\cap\ker_g \beta_g}\setminus \tb{0}.$$
    \item $g$ is (strictly) \textbf{stable} if it is \textbf{both} (strictly) conformally stable and (strictly) mode stable.
    \item $g$ is (conformally/mode-) \textbf{unstable} if it is not (conformally/mode-) stable.
\end{enumerate}
\end{definition}
    Note that the definition is defined independently of the splitting \eqref{stability_splitting}. Indeed, in the case where $\scal_g/(n-1)\in \sigma_N(\Delta)$, we may still consider the metric stable/unstable, though it may clearly not be strictly stable.

    \begin{remark} As we saw in Remark \ref{infEinsteindef}, the kernel of $F_g$ encodes the same information, up to rescalings and isometries fixing the boundary, as the space of infinitesimal Einstein deformations, $\varepsilon(g)$. It should therefore come as no surprise, that strict mode stability implies infinitesimal rigidity in the usual sense: $\varepsilon(g)=\tb{0}$. We may see this in the following way: If $h\in \varepsilon(g)\subset \ker_gE'_g $ then $\Delta_Eh-2\delta^*\beta h=0$, which means 
    $$0=\tr\p{\Delta_Eh-2\delta^*\beta h}=2Ph.$$
    Thus $h\in \ker_g P$, and we may write $h=k+\delta^*\omega$ according to Proposition \ref{bianchisplit}, with $k\in \ker_g P\cap\ker_g\beta$ and $\omega\in \Omega_0$. Clearly, 
    $$Fk=Fh=\Delta_Eh-2\delta^*\beta h=0.$$
    So if $g$ is strictly mode stable, we must have $k=0$. This would imply that $h$ is pure gauge, $h\in \Imag \delta^*|_{\Omega_0}$, which is $L^2$-orthogonal to $\varepsilon(g)\subset \ker_g\delta$. The only possibility left is $h\equiv 0$, and since it was arbitrarily chosen, $\varepsilon(g)=\tb{0}$.
    \end{remark}

\begin{remark}
    It is clear that non-positive Einstein metrics are conformally stable. Furthermore, in the fortunate case where $\partial M$ is convex, Escobar's eigenvalue estimate \eqref{Obata} shows that 
    $$\p{F(\psi g),\psi g}_{L^2}=-(n-2)\p{\tr P^*\psi,\psi}_{L^2}\leq 0,$$
    and equality can occur if and only if $(M,g)$ is isometric to a hemisphere. Thus, conformal instability can only occur in positive Einstein metrics with non-convex boundary. See Examples \ref{example:spherical_strip} and \ref{example:spherical_cap} for two such cases. 
\end{remark}

Even though conformal instabilities may arise for positive Einstein metrics with non-convex boundary, we may still obtain an Obata-type uniqueness theorem that encompasses such metrics. At this point, we shall remind the reader that convexity in $\mathcal{M}_B$ can be considered a property of the conformal class.

\begin{theorem}\label{ObataTypeUniqueness} Let $g_1,g_2\in \mathcal{M}_B$ be Einstein metrics in the same conformal class, $\sq{g_1}=\sq{g_2}$. If $g_1$ is not isometric to $\p{\mathbb{S}^n_+(r),g_{\mathbb{S}^{n}}}$ for any $r>0$, then $g_1=cg_2$ for some $c>0$. If $g_1$ is isometric to $\p{\mathbb{S}^n_+(r),g_{\mathbb{S}^{n}}}$, then the same conclusion holds, up to isometry.
\end{theorem}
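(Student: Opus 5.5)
We adapt Obata's original argument for conformally related Einstein metrics to the setting with boundary. Since $g_1,g_2\in\mathcal{M}_B$ are conformal, the discussion following \eqref{MB} gives a positive Neumann function relating them; write $g_2=u^{-2}g_1$ with $u\in C^\infty_N(M)$, $u>0$, $\nu(u)=0$ on $\PS$. The standard conformal transformation formula for the Ricci tensor, with $E$ the trace-free Ricci tensor, reads
$$E_{g_2}=E_{g_1}+\frac{n-2}{u}\,\tf{\nabla^2_{g_1}u}.$$
Both metrics are Einstein, so both $E$'s vanish, and we get $\tf{\nabla^2_{g_1}u}=0$. Hence
$$\nabla^2_{g_1}u=-\frac{\Delta_{g_1}u}{n}\,g_1.$$

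Next we show that $\Delta_{g_1}u$ is an affine function of $u$. Take the divergence of the Hessian identity and use the Ricci identity $\delta\nabla^2u=\nabla\Delta u-\Ric\cdot\nabla u$ with $\Ric_{g_1}=\mu g_1$. This gives $d\big(\tfrac{n-1}{n}\Delta u-\mu u\big)=0$, i.e.
$$\Delta u=\frac{n\mu}{n-1}(u-a)$$
for a constant $a$. Put $\psi:=u-a$. Then $\psi$ is still Neumann and satisfies
$$\nabla^2\psi=-\frac{\mu}{n-1}\psi\,g_1.$$
This is exactly $P^*_{g_1}\psi=\big(\Delta\psi-\mu\psi\big)g_1+\nabla^2\psi=0$, as one checks by plugging in $\Delta\psi=\tfrac{n\mu}{n-1}\psi$. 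We now split into two cases according to whether $\psi$ is constant.

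\textbf{Case $\psi$ constant.} Then $u$ is constant and $g_1=cg_2$ directly. (If $\mu\neq0$ and $\psi$ is a nonzero constant, the Hessian identity forces $\psi=0$; if $\mu=0$, then $\nabla^2u=0$ and $\nu(u)=0$ is a separate subcase handled next.)

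\textbf{Case $\psi$ nonconstant.} Write $\psi=\psi_0+m$ with $\psi_0\in C^\infty_{g,N}(M)$ and $m$ the mean of $\psi$. If $\mu\neq0$, integrating $\Delta\psi=\tfrac{n\mu}{n-1}\psi$ over $M$ with the Neumann condition gives $m=0$. So $\psi\in\ker P^*_{g_1}|_{C^\infty_{g,N}}$ is nontrivial, and Theorem \ref{Escobar4.2} forces $(M,g_1)$ to be a round hemisphere. If $\mu=0$, then $\nabla^2u=0$, so $\nabla u$ is a parallel vector field. The Neumann condition makes it tangent to $\PS$, and then $0=\nabla^2u(\nu,\cdot)$ on the boundary gives $\SF(\nabla u,\cdot)=0$. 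I expect this Ricci-flat subcase to be the main obstacle. The first thing to try: integrate $|\nabla u|^2$, which is constant, against the identity $\int_M \Delta(u^2)=0$ coming from the Neumann condition. Since $\Delta(u^2)=2u\Delta u-2|\nabla u|^2=-2|\nabla u|^2$, this yields $|\nabla u|^2|M|=0$, so $u$ is constant. For $\mu<0$ the mean-zero argument above already applies, and Theorem \ref{Escobar4.2} cannot hold since the scalar curvature is negative, so $\psi=0$.

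\textbf{Hemisphere case.} It remains to treat $g_1$ isometric to $\mathbb{S}^n_+(r)$. By the Remark after Theorem \ref{Escobar4.2}, $\psi$ is then a linear combination of the coordinate functions $x_1,\dots,x_n$, and $u=a+\psi$ with $u>0$. The metrics $u^{-2}g_{\mathbb{S}^n}$ are exactly the pullbacks of the round metric by conformal transformations of $\mathbb{S}^n$ preserving the equator, i.e. the dilations along directions in $\{x_{n+1}=0\}$, up to scale. So $g_2$ is, up to a constant, isometric to $g_1$ via such a conformal diffeomorphism. This explicit identification of $u^{-2}g_{\mathbb{S}^n}$ with the pullback by a boundary-preserving Möbius map is the other delicate step. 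I would verify it by direct computation in stereographic coordinates, and check that the map preserves $\mathbb{S}^n_+$ (it fixes the $x_{n+1}$-direction, hence the equator and each hemisphere).
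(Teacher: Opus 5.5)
Your argument is sound in substance, and away from the hemisphere it is the paper's proof. The conformal change of the trace-free Ricci tensor gives $\tf{\nabla^2_{g_1}u}=0$, and the divergence together with the Ricci identity gives $\frac{n-1}{n}\,d\Delta u=\mu\,du$. For $\mu\neq 0$ your $u-a$ is a constant multiple of the paper's $\psi=\Delta_{g_1}\phi/n$, which is a mean-zero Neumann element of $\ker P^*_{g_1}$, so Theorem \ref{Escobar4.2} applies.

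One step fails as written. From $d\big(\frac{n-1}{n}\Delta u-\mu u\big)=0$ you cannot conclude $\Delta u=\frac{n\mu}{n-1}(u-a)$ when $\mu=0$. You only get $\Delta u\equiv C$, so your claim that $\nabla^2u=0$ in the Ricci-flat case is not yet justified. The repair is the paper's: $C\abs{M}=\int_M\Delta u\,dV=-\int_{\PS}\nu(u)\,dA=0$. With $\Delta u=0$ and $\nu(u)=0$ you then get $\norm{du}^2_{L^2}=(u,\Delta u)_{L^2}=0$ at once. So this case is no obstacle, and the detour through parallel gradient fields and the second fundamental form can be dropped.

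For the hemisphere you take a genuinely different route. The paper never writes the isometry down. It notes $\scal_{g_2}>0$ (Proposition \ref{properties_of_MB}) and reruns the argument with the roles exchanged, $g_1=(u^{-1})^{-2}g_2$ with $u^{-1}$ nonconstant. Hence $(M,g_2)$ is also a round hemisphere, and the two metrics are homothetic up to isometry.

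You instead use the Remark after Theorem \ref{Escobar4.2} to write $u=a+\langle b,x\rangle$ with $b\perp e_{n+1}$, and identify $u^{-2}g_{\mathbb{S}^n}$ with a constant times a M\"obius pullback. This is more informative: the isometry becomes an explicit boundary-preserving conformal diffeomorphism of $g_1$, and you see exactly which conformal factors occur. The cost is the computation you only announce, which is short:
\begin{itemize}
\item On the unit sphere, $b\perp e_{n+1}$ means $\langle b,x\rangle$ attains its minimum $-\abs{b}$ over $\mathbb{S}^n_+$ on the equator. So $u>0$ forces $a>\abs{b}$.
\item After a rotation fixing $e_{n+1}$, take $b=\abs{b}e_1$ and set $\tanh t=\abs{b}/a$.
\item The boost $\Phi(x)=(\sinh t+\cosh t\,x_1,x_2,\ldots,x_{n+1})/(\cosh t+\sinh t\,x_1)$ satisfies $\Phi^*g_{\mathbb{S}^n}=(\cosh t+\sinh t\,x_1)^{-2}g_{\mathbb{S}^n}=(a^2-\abs{b}^2)\,u^{-2}g_{\mathbb{S}^n}$.
\item $\Phi$ maps $\mathbb{S}^n_+$ to itself, since its last coordinate is $x_{n+1}$ divided by a positive function.
\end{itemize}
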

\begin{proof}
Suppose $g_2=\phi^{-2}g_1$, for some $0<\phi\in C_N^\infty(M)$, and consider the conformal transformation formula for the Einstein tensor:
$$E_{g_2}=E_{g_1}+(n-2)\phi^{-1}\p{\nabla^2_{g_1}\phi+\frac{\Delta_{g_1}\phi}{n}g_1}.$$
See e.g \cite[Theorem 1.159e]{Bes}. Since both metrics are assumed to be Einstein, their respective Einstein tensors vanish, leaving only 
$$\nabla^2_{g_1}\phi+\frac{\Delta_{g_1}\phi}{n}g_1=0.$$
If $\phi$ is constant, we are done. Assume, for contradiction, that $\psi:=\frac{\Delta_{g_1}\phi}{n}$ does not vanish identically. Using a Ricci formula,
$$d\psi=\delta_{g_1}\p{-\psi g_{1}}=\delta_{g_1}\nabla^2_{g_1}\phi=\Delta_{g_1}d\phi=d\Delta_{g_1}\phi-\mu_1d\phi=nd\psi-\mu_1d\phi.$$
Or, rearranged slightly,
\begin{equation}\label{eq:dpsi}d\psi=\frac{\mu_1}{n-1}d\phi.\end{equation}

If $g_1$ is Ricci-flat, $\mu_1=0$, we must have $\psi\equiv c$. By definition of $\psi$,
$$c=\frac{1}{\abs{M}_{g_1}}\int_M\psi\; dV_{g_1}=-\frac{1}{n\abs{M}_{g_1}}\int_{\PS}\nu_{g_1}(\phi)\; dA_{g_1}=0.$$
The only Neumann functions with $\Delta_{g_1} \phi=0$ are the constant ones.\\

If $\mu_1\neq 0$, it is apparent from \eqref{eq:dpsi} that $\psi\in C_{g,N}^\infty(M)$, and by taking the covariant derivative of both sides,
$$\nabla^2_{g_1}\psi=-\frac{\mu_1}{n-1}\psi g_1.$$
It may now be verified that $P^*_{g_1}\psi=0$, which by Theorem \ref{Escobar4.2} implies that $g_1$ is isometric to a hemisphere. In that case, Proposition \ref{properties_of_MB} implies that $\scal_{g_2}>0$, and the same argument as above shows that $(M,g_2)$ is likewise isometric to a hemisphere. Thus, up to the standard isometries of the hemisphere, $g_2$ is a rescaling of $g_1$.
\end{proof}

\section{Examples of Conformally Unstable Manifolds}\label{Sec:confunstab}

As mentioned, positive Einstein metrics with non-convex boundary need not be conformally stable. We illustrate this with two examples.

\begin{example}\label{example:spherical_strip} Consider the n-dimensional spherical strip, the middle section of the unit sphere, defined for $\varepsilon\in (0,1)$ by
$$\mathbb{S}^n_\varepsilon=\tb{x\in \mathbb{S}^n\subset \R^{n+1}\; :\;\abs{x_{n+1}}<\varepsilon}$$
We shall parametrise the standard round metric as the warped product:
$$g=d\theta^2+\cos^2\theta \;g_{\mathbb{S}^{n-1}},\qquad \qquad \theta\in [-\sin^{-1}\varepsilon,\sin^{-1}\varepsilon].$$
By considering a principal eigenfunction for the equatorial Laplacian, we may show that we can make the principal Neumann eigenvalue of $\mathbb{S}^{n}_\varepsilon$ as close that of $\mathbb{S}^{n-1}$ as we want, by choosing $\varepsilon$ small enough. Consider a standard spherical harmonic function on $\tb{x_{n+1}=0}\simeq \mathbb{S}^{n-1}$, satisfying
$$\Delta_{\mathbb{S}^{n-1}}Y=(n-1)Y,\qquad \norm{Y}_{L^2\p{\mathbb{S}^{n-1}}}=1,\qquad \int_{\mathbb{S}^{n-1}}Y\; dA=0.$$
Extend it trivially to a function on $\mathbb{S}^n_\varepsilon$,
$$f(\theta,\varphi):=Y(\varphi),\qquad \qquad \varphi=(\varphi_1,\ldots,\varphi_{n-1})\in \mathbb{S}^{n-1}.$$
This is a manifestly Neumann function with vanishing total value. Or, with respect to the usual notation, $f\in C^\infty_{g,N}\p{\mathbb{S}^n_\varepsilon}$. The principal Neumann eigenvalue of the Laplacian is given by the Rayleigh quotient 
$$\lambda_1=\inf_{\psi\in C_{g,N}^\infty\setminus \tb{0}}\mathcal{R}[\psi],\qquad \text{where}\qquad \mathcal{R}[\psi]:=\frac{\p{\psi,\Delta\psi}_{L^2}^2}{\norm{\psi}_{L^2}^2}=\frac{\norm{d\psi}_{L^2}^2}{\norm{\psi}_{L^2}^2}.$$
We may therefore obtain an upper bound for $\lambda_1$ by estimating $\mathcal{R}[f]$. The Laplacian of $g$ may be separated as
$$\Delta_g\psi=-\partial^2_{\theta\theta}\psi+(n-1)\tan\theta\; \partial_\theta \psi+\frac{1}{\cos^2\theta}\Delta_{\mathbb{S}^{n-1}}\psi.$$
As $\partial_\theta f=0$ and $Y$ is normalised, we have
\begin{equation}\label{eq:example1bound}\begin{split}\p{f,\Delta_g f}_{L^2}&=(n-1)\int_{-\sin^{-1}\varepsilon}^{\sin^{-1}\varepsilon}\cos^{n-3}\theta\; d\theta\\ &\leq \frac{n-1}{\cos^2\p{\sin^{-1}\varepsilon}}\int_{-\sin^{-1}\varepsilon}^{\sin^{-1}\varepsilon}\cos^{n-1}\theta\; d\theta=\frac{n-1}{1-\varepsilon^2}\norm{f}_{L^2}^2.\end{split}\end{equation}
This shows that, for $\varepsilon<\frac{1}{\sqrt{n}}$, we have 
$$\lambda_1\p{\mathbb{S}^n_\varepsilon}\leq \frac{n-1}{1-\varepsilon^2}<n=\lambda_1\p{\mathbb{S}^n},$$
and indeed 
$$\p{F(fg),fg}_{L^2}=-(n-2)(n-1)\p{(\Delta-n)f,f}_{L^2}>0.$$
As a final remark, we note that for $n=2$ the first integral of \eqref{eq:example1bound} may be evaluated directly to produce the sharper bound 
$$\lambda_1\pbig{\mathbb{S}^2_\varepsilon}\leq \frac{\ln\p{\frac{1+\varepsilon}{1-\varepsilon}}}{2\varepsilon}.$$
So, far from needing $\varepsilon <\frac{1}{\sqrt{2}}\sim 0.71$, we achieve the desired effect by removing very small caps. We simply require $\varepsilon<0.9575\ldots$ (the unique positive solution of $\ln\p{\frac{1+\varepsilon}{1-\varepsilon}}=4\varepsilon$). The sharper bound may then be carried over to an $n$-dimensional Einstein manifold, by taking the direct (non-warped) product $\mathbb{S}^2_\varepsilon\times \mathbb{S}^{n-2}$ with the usual product metric.
\end{example}

A reader familiar with the literature on this topic, might venture the guess that the conformal instabilities arise from the boundary being disconnected. 
The following example is modelled over the hemisphere and shows that the round metric immediately becomes conformally unstable if we extend the domain past the equator, where the boundary is no longer convex.

\begin{example}\label{example:spherical_cap}
    Consider now the (lower) spherical cap $\mathbb{S}^n_L$
    $$g=d\theta^2+\cos^2\theta \; g_{\mathbb{S}^{n-1}}\qquad \theta \in \sq{-\frac{\pi}{2},\frac{\pi}{2}L}$$
    for $L\in [0,1)$. With $Y$ as in the preceding example, we define the function
    $$f(\theta,\varphi)=\sin\p{\frac{\theta+\frac{\pi}{2}}{L+1}}Y(\varphi).$$
    It is not straightforward to estimate the Rayleigh quotient of $f$ directly, so we will instead show that it is decreasing at $L=0$, which will then imply that $\mathbb{S}^n_L$ is conformally unstable for $L>0$ small enough (the case $L=0$ is of course the hemisphere, which is stable, though not strictly stable). We consider the Rayleigh quotient of $f$ as a function of $L$,
    \begin{align*}R(L):=\frac{\p{L+1}^{-2}\int_{0}^\frac{\pi}{2}\cos^2\theta\sin^{n-1}(L+1)\theta\; d\theta+(n-1)\int_{0}^\frac{\pi}{2}\sin^2\theta\sin^{n-3}{(L+1)\theta}\; d\theta}{\int_{0}^\frac{\pi}{2}\sin^2\theta \sin^{n-1}{(L+1)\theta}\; d\theta}.\end{align*}
    Using 
    $$\int_{0}^\frac{\pi}{2}\sin^{n+1}\theta\; d\theta=\frac{n}{n+1}\int_{0}^\frac{\pi}{2}\sin^{n-1}\theta\; d\theta,$$
    one may easily verify that 
    $$R(0)=\frac{\p{n-\frac{n}{n+1}}\int_{0}^\frac{\pi}{2}\sin^{n-1}\theta\; d\theta}{\frac{n}{n+1}\int_{0}^\frac{\pi}{2}\sin^{n-1}\theta\; d\theta}=n=\lambda_1\p{\mathbb{S}^n_+}.$$
    In particular, $R$ coincides with the principal Neumann eigenvalue at $L=0$. One may then apply 
    $$\int_{0}^\frac{\pi}{2}\theta\cos\theta\sin^{n}\theta\; d\theta=\frac{\pi}{2(n+1)}-\frac{1}{n+1}\int_{0}^\frac{\pi}{2}\sin^{n+1}\theta\; d\theta,$$
    to show that
    $$R'(0)=\frac{-\frac{\pi}{2}\frac{n}{n+1}\int_{0}^\frac{\pi}{2}\sin^{n-1}\theta\; d\theta}{\p{\frac{n}{n+1}\int_{0}^\frac{\pi}{2}\sin^{n-1}\theta\; d\theta}^2}=-\frac{\pi(n+1)}{2n}\p{\int_{0}^\frac{\pi}{2}\sin^{n-1}\theta\; d\theta}^{-1}<0.$$
    This implies that $\lambda_1(\mathbb{S}^n_L)<n$ for $L>0$ small. As in the preceding example, this means that $\p{\mathbb{S}^n_L,g_{\mathbb{S}^n}}$ is conformally unstable.\\

    Though the integrals are difficult to evaluate algebraically, we include a plot below of the numeric values of the upper bound $R(L)-n$ for a range of dimensions.
    \begin{figure}[ht]
    \centering
    \def\svgwidth{0.8\textwidth}
    %% Creator: Inkscape 1.4 (e7c3feb1, 2024-10-09), www.inkscape.org
%% PDF/EPS/PS + LaTeX output extension by Johan Engelen, 2010
%% Accompanies image file 'sphericalcap.pdf' (pdf, eps, ps)
%%
%% To include the image in your LaTeX document, write
%%   \input{<filename>.pdf_tex}
%%  instead of
%%   \includegraphics{<filename>.pdf}
%% To scale the image, write
%%   \def\svgwidth{<desired width>}
%%   \input{<filename>.pdf_tex}
%%  instead of
%%   \includegraphics[width=<desired width>]{<filename>.pdf}
%%
%% Images with a different path to the parent latex file can
%% be accessed with the `import' package (which may need to be
%% installed) using
%%   \usepackage{import}
%% in the preamble, and then including the image with
%%   \import{<path to file>}{<filename>.pdf_tex}
%% Alternatively, one can specify
%%   \graphicspath{{<path to file>/}}
%% 
%% For more information, please see info/svg-inkscape on CTAN:
%%   http://tug.ctan.org/tex-archive/info/svg-inkscape
%%
\begingroup%
  \makeatletter%
  \providecommand\color[2][]{%
    \errmessage{(Inkscape) Color is used for the text in Inkscape, but the package 'color.sty' is not loaded}%
    \renewcommand\color[2][]{}%
  }%
  \providecommand\transparent[1]{%
    \errmessage{(Inkscape) Transparency is used (non-zero) for the text in Inkscape, but the package 'transparent.sty' is not loaded}%
    \renewcommand\transparent[1]{}%
  }%
  \providecommand\rotatebox[2]{#2}%
  \newcommand*\fsize{\dimexpr\f@size pt\relax}%
  \newcommand*\lineheight[1]{\fontsize{\fsize}{#1\fsize}\selectfont}%
  \ifx\svgwidth\undefined%
    \setlength{\unitlength}{578.28410952bp}%
    \ifx\svgscale\undefined%
      \relax%
    \else%
      \setlength{\unitlength}{\unitlength * \real{\svgscale}}%
    \fi%
  \else%
    \setlength{\unitlength}{\svgwidth}%
  \fi%
  \global\let\svgwidth\undefined%
  \global\let\svgscale\undefined%
  \makeatother%
  \begin{picture}(1,0.32870109)%
    \lineheight{1}%
    \setlength\tabcolsep{0pt}%
    \put(0,0){\includegraphics[width=\unitlength,page=1]{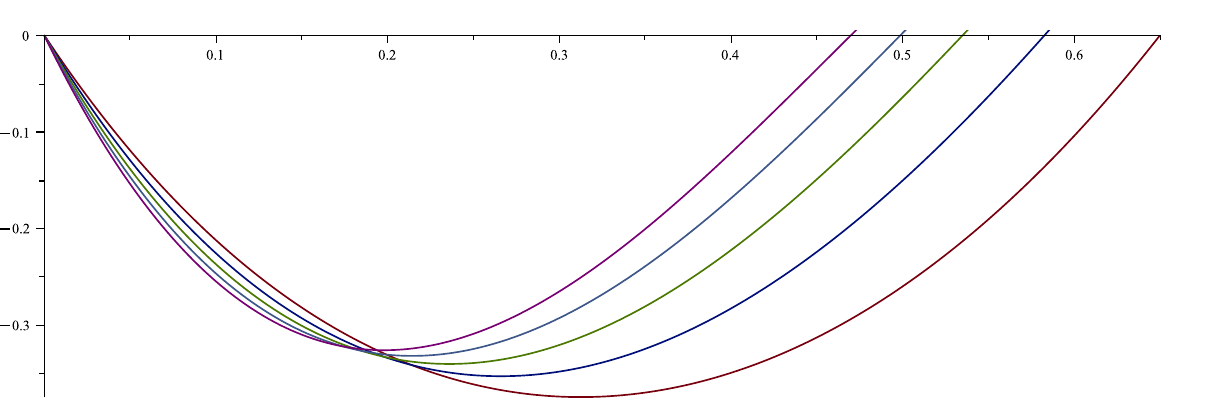}}%
    \put(0.45593195,0.25376155){\color[rgb]{0,0,0}\makebox(0,0)[lt]{\lineheight{1.25}\smash{\begin{tabular}[t]{l}$L$\end{tabular}}}}%
    \put(0.65076215,0.31116582){\color[rgb]{0.47058824,0,0.44705882}\makebox(0,0)[lt]{\lineheight{1.25}\smash{\begin{tabular}[t]{l}$n=7$\end{tabular}}}}%
    \put(0.74336852,0.31116582){\color[rgb]{0.24313725,0.34117647,0.54117647}\makebox(0,0)[lt]{\lineheight{1.25}\smash{\begin{tabular}[t]{l}$6$\end{tabular}}}}%
    \put(0.79427509,0.31116581){\color[rgb]{0.29019608,0.47058824,0}\makebox(0,0)[lt]{\lineheight{1.25}\smash{\begin{tabular}[t]{l}$5$\end{tabular}}}}%
    \put(0.86148824,0.31116581){\color[rgb]{0,0.05490196,0.47058824}\makebox(0,0)[lt]{\lineheight{1.25}\smash{\begin{tabular}[t]{l}$4$\end{tabular}}}}%
    \put(0.95363761,0.31116581){\color[rgb]{0.47058824,0,0.05490196}\makebox(0,0)[lt]{\lineheight{1.25}\smash{\begin{tabular}[t]{l}$3$\end{tabular}}}}%
  \end{picture}%
\endgroup%

   \caption{$R(L)-n$ for $n=3,\ldots,7$ and $L\in [0,0.65)$. This shows that the unstable range is actually quite large; every manifold between the hemisphere and the lower "three-quarter" sphere (at least for $3\leq n\leq 5$).}
\end{figure}
\end{example}

\break

\section{Schwarzschild-AdS}\label{sec:SADS}

In this section, we consider the family of $n$ dimensional Schwarzschild anti-deSitter (SAdS) metrics
\begin{equation}\label{SADSmetric}g=V(r)^{-1}\; dr^2+V(r)\; dt^2+r^2\; g_{\mathbb{S}^{n-2}},\qquad \qquad V(r)=1-\frac{2m}{r^{n-3}}+\frac{\mu r^2}{n-1}.\end{equation}
These form a classic family of examples of non-positive Einstein metrics with $\Ric_g=-\mu g$ ($\mu\geq 0$), modelling an isolated black hole in a curved background. Taking $\mu = 0$, we get the usual Ricci-flat Schwarzschild metric. The metrics are á priori defined on the manifold 
$$(r_0,\infty)\times \R\times \mathbb{S}^2$$
where $r_0$ is the unique positive root of $V$, corresponding to the event horizon of black hole. In the Riemannian setting, we may extend the metric to the coordinate singularity $r=r_0$, by compactifying the temporal dimension.\\

As $r_0$ is an isolated singularity of multiplicity one, near the singularity we have
$$V(r)\sim V'(r_0)(r-r_0).$$
By a change of variable 
$$\rho=\int_{r_0}^r\frac{1}{V(s)^{1/2}}\;ds\sim \sqrt{\frac{4(r-r_0)}{V'(r_0)}} $$
one may express $g$ as 
$$g=d\rho^2+\rho^2 U(\rho)\; dt^2+W(\rho)\; g_{\mathbb{S}^{n-2}},$$
where $W(\rho)\to r_{0}^2$ as $\rho\to 0$ and
$$U(\rho)=\frac{V(r)}{\rho^2}\sim \frac{V'(r_0)(r-r_0)}{\rho^2}\sim\frac{V'(r_0)^2}{4},$$ 
near the singularity. As the spherical part behaves nicely, our goal is to avoid a conical singularity in the $(\rho,t)$ plane. This requires a periodic identification $t\sim t+\frac{4\pi}{V'(r_0)}$, such that if we set $t=\frac{2\tau}{V'(r_0)}$ we have 
$$g\sim d\rho^2+\rho^2\; d\tau^2+W(\rho)\; g_{\mathbb{S}^{n-2}},$$
with the manifold collapsing to an $(n-2)$-sphere for $\rho\to 0$.\\

From now on, we consider the family of metrics \eqref{SADSmetric} on the manifold
$$M_R:=[r_0,R]\times \R/\beta\mathbb{Z}\times \mathbb{S}^2,\qquad \qquad \beta=\frac{4\pi}{V'(r_0)}.$$
Note the addition of an upper limit on the range of $r$, the cavity radius $R>r_0$. This we allow to vary, so as to explore the effect of the boundary conditions imposed at different radii. \\

The period $\beta$ has an interesting physical interpretation as the reciprocal temperature measured at infinity. By the Tolman law, a local observer at distance $r>r_0$ will measure a local temperature of 
\begin{equation}\label{Tolmantemp}T(r)=\frac{1}{\beta\sqrt{V(r)}}.\end{equation}

An alternative notion of stability for black holes is that of \textit{thermodynamic stability}, see \cite{Pre}. In the setting of a Schwarzschild or Schwarzschild anti-deSitter black hole in a spherical cavity with radius $R>r_0$, one considers it thermodynamically stable if the heat capacity 
$$C_R=\p{\frac{\partial E}{\partial T}}_R$$
is positive. Here $T=T(R,m)$ is the Tolman temperature, \eqref{Tolmantemp}, and $E$ is the Brown-York quasilocal energy, \cite{BY},
$$E(R,m)=R\p{\sqrt{1+\frac{\mu R^2}{n-1}}-\sqrt{1-\frac{2m}{R}+\frac{\mu R^2}{n-1}}}.$$
To give us an idea of the cavity size $R$, for which an SAdS black hole of mass $m$ and cosmological constant $\mu$, goes from being thermodynamically stable to unstable, we shall estimate the value at which $C_R$ changes sign. We do this only for $n=4$. By definition, $r_0$ is a solution of
$$m=\frac{r_0}{2}\p{1+\frac{\mu r_0^2}{3}}.$$
We may therefore consider both $E$ and $T$ as functions in $R$ and $r_0$. Then one may calculate 
$$\frac{\partial E}{\partial r_0}=\frac{1+\mu r_0^2}{2\sqrt{V(R)}},$$
and 
$$\frac{\partial T}{\partial r_0}=\frac{2\mu\p{\mu r_0^2-1}R^3+6\p{\mu r_0^2-1}R+\mu^2r_0^5+2\mu r_0^3+9r_0}{8\pi \sqrt{V(R)}r_0^2(R-r_0)(\mu R^2 + \mu r_0 R + \mu r_0^2 + 3)}.$$
Notice that $\frac{\partial E}{\partial r_0}$ is positive for all values of $R>r_0$. The same goes for the denominator of $\frac{\partial T}{\partial r_0}$. We surmise that 
$$C_R=\frac{\partial E/\partial r_0}{\partial T/\partial r_0}>0\qquad \iff \qquad 2\mu\p{\mu r_0^2-1}R^3+6\p{\mu r_0^2-1}R+\mu^2r_0^5+2\mu r_0^3+9r_0>0.$$
The right expression has a single positive root $R_0>r_0$, where it goes from being positive to being negative. Substituting $m$ back, we can estimate this point as 
$$R_0=3m+\frac{56}{3}m^3\mu +\mathcal{O}(\mu^2).$$
This shows that a Schwarzschild black hole of mass $m$ is thermodynamically stable in a cavity of radius $r_0|_{\mu=0}=2m<R<R_0|_{\mu=0}=3m$, and unstable for $R>3m$. We also see that, at least for small values of $\mu>0$, the range of thermodynamically stable cavity sizes increases with $\mu$. As we shall see, this is exactly the behaviour we observe with respect to our notion of (mode) stability.\\

In their original construction of a negative mode for the linearised Einstein equations on the four dimensional Schwarzschild manifold \cite{GPY}, Gross, Perry and Yaffe proved that such a mode must lie in the lowest frequency, spherically symmetric sector of $TT_g\subset TV_g$. The same should be true in higher dimensions, and in the presence of a negative cosmological constant $-\mu<0$, as argued in \cite{Pre}. As it is beyond the spectrum of the present paper to prove that the same holds for $TV_g$, we shall be content with including it as a caveat in the statement of the first theorem of this section:

\begin{theorem}\label{thm:SADS}
    Every member of the SAdS family is (spherically symmetrically) stable on
    $$M_R=[r_0,R]\times \R/\beta\mathbb{Z}\times \mathbb{S}^{n-2},$$
    when $R=\p{(n-1)m}^{1/(n-3)}$. The stability is strict for $\mu \neq 0$.
\end{theorem}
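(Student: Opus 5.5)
The plan is to treat the two halves of the definition of stability separately. \emph{Conformal stability} should be immediate. The SAdS metric has Einstein constant $-\mu\le 0$, so by the remark following the definition of stability, non-positive Einstein metrics are conformally stable. When $\mu>0$ it is strictly so, because $\scal_g<0$ makes $-(n-2)(\tr P^*\psi,\psi)_{L^2}$ strictly negative on $C^\infty_{g,N}(M)\cdot g\setminus\{0\}$. The hypothesis $\scal_g/(n-1)\notin\sigma_N(\Delta)$ then holds trivially. So Proposition \ref{bianchisplit} and Proposition \ref{prop:fredholm} apply, and mode stability reduces to showing that $\Delta_E$ has no negative eigenvalue (and no zero eigenvalue when $\mu\neq0$) on the spherically symmetric part of $TV_g$. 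By discreteness of the spectrum it is enough to rule out a single eigenmode $h$ with $\Delta_E h=\lambda h$ and $\lambda<0$ (respectively $\lambda\le0$).

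The choice $R=((n-1)m)^{1/(n-3)}$ enters through the geometry of $\partial M_R$. Its principal curvatures are $V'/(2\sqrt V)$ in the $t$-direction and $\sqrt V/r$ on the sphere. A short computation shows these agree exactly when $rV'=2V$, and this is equivalent to $(n-1)m=r^{n-3}$, independently of $\mu$. So at this radius the boundary is totally umbilic, $\tf{\SF}_g=0$: it is the "photon sphere". I would exploit this in the constraint condition \eqref{hidden1}. For a spherically symmetric, $t$-independent mode, $\varphi$ is constant on $\partial M$, so $\Delta_{g^T}\varphi=0$. The umbilicity kills the $\tf{\SF}$ term, and \eqref{hidden1} collapses to an algebraic relation on the boundary between $\varphi$, $\lambda$ and $\tr_g h$, namely $\bigl(\mu-\tfrac{n-1}{n-2}\lambda\bigr)\varphi=-\tfrac{\lambda}{2(n-2)}\tr_g h$ (using Einstein constant $-\mu$). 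Similarly, in the Green identity \eqref{greens} and in the boundary term of $(\Delta_E h,h)_{L^2}=\norm{\nabla h}^2-2(\overset{\circ}{R}h,h)+\int_{\partial M}\langle\nabla_\nu h,h\rangle\,dA$, umbilicity together with $h^T=\varphi g^T$ should reduce the boundary contribution to an expression in $\varphi$, $h(\nu,\nu)$ and their normal derivatives only.

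Next I would write the ansatz $h=a(r)V^{-1}dr^2+b(r)V\,dt^2+c(r)r^2g_{\mathbb{S}^{n-2}}$; the $dr\,dt$ term is excluded by $t\mapsto -t$ symmetry, and angular mixed terms by spherical symmetry. Imposing $\beta_gh=0$ (one ODE, the $dr$ component) and $P_gh=0$ (one further ODE) eliminates two of the three functions. This leaves the eigenvalue problem as a second order ODE for a single scalar, in the spirit of the Gross--Perry--Yaffe and Allen/Prestidge reductions. Regularity at the bolt $r=r_0$ gives one endpoint condition. At $r=R$ the boundary conditions \eqref{BC1}--\eqref{BC2geometric}, namely $b=c$ together with the linearised mean curvature equation, plus the constraint relation above, give a Robin-type condition. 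I would then write the quadratic form as a Rayleigh quotient $\int_{r_0}^R\bigl(p(r)u'^2+q(r)u^2\bigr)dr+\text{(boundary term at }R)$ and show it is nonnegative. The interior part should become a sum of squares after completing the square with the gauge and constraint equations. The umbilic boundary term should have the favourable sign, and strict positivity when $\mu>0$ should come from a term proportional to $\mu$ that survives.

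The main obstacle I expect is this last positivity step. The GPY negative mode shows that the interior potential $-2\overset{\circ}{R}$ is genuinely indefinite on spherically symmetric modes, so positivity cannot come from the bulk alone. It must come from the interplay of $P_gh=0$ with the boundary condition, and it must fail for $R>3m$ when $n=4$. I would first try a Sturm comparison against the explicit zero mode generated by varying the mass parameter $m$ (the "isothermal" direction $\partial_m g$, suitably projected into $TV_g$). I would check that its Robin data at $R=((n-1)m)^{1/(n-3)}$ is exactly borderline when $\mu=0$, giving stability but not strictness, and strictly favourable when $\mu>0$.
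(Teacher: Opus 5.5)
\textbf{Where the gap is.} Your conformal-stability argument and the umbilicity computation are correct. The half that carries the theorem, mode stability, is never established. The Rayleigh quotient is never written down. The claims that its bulk part ``should become a sum of squares'' and that the boundary term ``should have the favourable sign'' are unsupported, and you yourself note that the bulk potential is indefinite. The Sturm comparison with the mass-variation mode is only named. To run it you would need to place that mode in $TV_g$ with the linearised boundary conditions, show it is nodeless on $(r_0,R)$, and have a usable endpoint condition at $R$.

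That last requirement clashes with the structure at this particular radius. In the natural reduction, eliminating $\psi$ through the divergence condition \eqref{diveq} divides by $rV'-2V$, which vanishes exactly at $R=((n-1)m)^{1/(n-3)}$. The resulting master equation (\eqref{master} for $n=4$) therefore has a regular singular point at the boundary, with indicial roots $0$ and $3$. It is not a regular endpoint carrying a Robin condition. Your expectation that $\mu=0$ is borderline is right: the paper finds eigenvalue $0$ there.

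\textbf{The missing idea, which you were one step from.} For a static spherically symmetric mode, $\delta_{g^T}(h\cdot\nu)=0$. So \eqref{BC2nongeometric}, the condition \eqref{BC2geometric} and $\beta_gh=0$ together give $\nu(\mathrm{tr}_gh)=0$. Since $\Ric_g=-\mu g$, the gauge gives $2P_gh=(\Delta+2\mu)\mathrm{tr}_gh$. Thus $P_gh=0$ is not ``one further ODE'' but a Neumann problem, and it forces $\mathrm{tr}_gh\equiv0$; for $\mu=0$, use the zero total trace. Your reduced form of \eqref{hidden1} then becomes $\bigl(\mu-\tfrac{n-1}{n-2}\lambda\bigr)\varphi=0$. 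Hence any such mode with $\varphi\neq0$ has $\lambda=\tfrac{n-2}{n-1}\mu$, which is $\geq0$ and strictly positive when $\mu\neq0$. This is exactly the paper's argument for $n\geq5$, and it matches the explicit $n=4$ value $\lambda=2\mu/3$.

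\textbf{Excluding $\varphi=0$.} The paper does this through the Frobenius structure at $r=R$. In the traceless diagonal form \eqref{diagonalformofh} one has $\varphi=-\chi(R)/(n-1)$. The $k=3$ branch carries no boundary data, so the eigenvalue is governed by the $k=0$ branch, where $\chi(R)\neq0$.

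\textbf{Reductions you took for granted.} First, you need the Fourier decomposition in $t$ to reach the static sector. Second, the odd sector $2h_{rt}\,dr\,dt$ is only separated off by $t\mapsto-t$, not removed. The paper kills it using the $t$-component of $\delta_gh=0$, which gives $h_{rt}=Cr^{2-n}V^{-1}$, together with regularity at the bolt, which forces $C=0$.
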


To begin the search for negative eigenmodes for $\Delta_E$, we start with an arbitrary spherical symmetric $h\in TV_g$. The most general form of such a metric perturbation is
$$h=h_{rr}(r,t)\;dr^2+2h_{rt}(r,t)\; dr\;dt+h_{tt}(r,t)\;dt^2+k(r,t)\; r^2g_{\mathbb{S}^{n-2}}.$$
See e.g. \cite{KI}. These are precisely the modes that preserve the spatial symmetry of the metric.
Since $g$ is static, so that $\Delta_E$ is invariant under translations in $t$, $h$ admits a Fourier expansion 
$$h=\sum_{k\in \mathbb{Z}}h_k(r)\exp\p{\frac{2\pi i k t}{\beta}}.$$
As the Fourier modes decouple for $\Delta_E$, the smallest possible eigenvalue will have to lie in the static ($k=0$) sector. Specifically, we may assume that $h$ is purely radial, $h=h(r)$. If we let $g^T$ be the induced metric on $\partial M_R=\tb{r=R}\times \R/\beta\mathbb{Z}\times \mathbb{S}^{n-2}$, we get that 
 $$\delta_{g^T}(h\cdot \nu)=-V(R)^{-1/2}\partial_{t}h_{tr}=0.$$ 
By imposing the conformal Neumann condition \eqref{BC2nongeometric} and the Bianchi gauge on $h$, one finds that  
$$0=\g{\delta h+d\tr h}{\nu}+\delta_{g^T}\p{h\cdot \nu}=\frac{1}{2}\g{d\tr h}{\nu}.$$
This is an elliptic condition on the trace, for the Bianchi-gauged equation $Ph=0$. That is, $\tr h$ satisfies 
$$2Ph=\p{\Delta+2\mu}\tr h=0\; \; \text{in}\;\; M ,\qquad \qquad \nu(\tr h)=0\;\;\text{on}\;\; \PS,$$
which implies the pointwise $\tr h=0$. Now, a traceless tensor tensor $h\in TV_g$, must lie in the subspace $TT_g$.\\

By considering the $t$ component of the divergence of $h$, we may solve for the off-diagonal components of $h$ explicitly. Combined with the previous simplifications, we find that our candidate negative mode must be of the form
$$h=\chi(r)V(r)^{-1}\;dr^2+2Cr^{2-n}V(r)^{-1}\; dr\;dt+\psi(r)V(r)\;dt^2+k(r)\; r^2g_{\mathbb{S}^{n-2}},$$
where $\chi(r)$, $\psi(r)$ and $k(r)$ are unknown radial functions, and $C\in \R$. It does not make sense to impose boundary conditions at the event horizon $r=r_0$, but we do require that the metric perturbation preserves the smooth extension. To that end, we expand the tensor $g+\varepsilon h$ ($0<\varepsilon\ll1$) near $r=r_0$. This is done in the same manner as when we extended the metric itself. In this case, to first order
$$g+\varepsilon h\sim d\rho^2+\frac{1+\varepsilon\psi(r_0)}{1+\varepsilon \chi(r_0)}\rho^2\; d\tau^2+\frac{8 C\varepsilon}{r_0^{n-2}V'(r_0)^2(1+\varepsilon\chi(r_0))^{1/2}\rho}\; d\rho \;d\tau+\p{1+\varepsilon k(r_0)}r_{0}^2\; g_{\mathbb{S}^{n-2}}.$$
We see that the periodicity and regularity is preserved if and only if $\psi(r_0)=\chi(r_0)$ and $C=0$.\\

The preceding discussion shows that we can restrict to diagonal perturbations of the form
\begin{equation}\label{diagonalformofh} h=\chi(r)V(r)^{-1}\;dr^2+\psi(r)V(r)\;dt^2-\frac{\psi(r)+\chi(r)}{n-2}\; r^2g_{\mathbb{S}^{n-2}}.\end{equation}
Such a $h$ is inherently traceless, while being transverse requires 
$$0=\delta h=-\p{\chi'(r)+\frac{rV'(r)+2(n-1)V(r)}{2rV(r)}\chi(r)-\frac{rV'(r)-2V(r)}{2rV(r)}\psi(r)}dr.$$
That is, we may express $\psi$ via an ODE in $\chi$:
\begin{equation}\label{diveq}\psi(r)=\frac{2rV(r)}{rV'(r)-2V(r)}\chi'(r)+\frac{rV'(r)+2(n-1)V(r)}{rV'(r)-2V(r)}\chi(r).\end{equation}
Note that $rV'(r)-2V(r)$ vanishes exactly at the umbilic boundary $R=\p{(n-1)m}^{1/(n-3)}$. \\

A straightforward calculation gives 
$$\p{\nabla^*\nabla h}_{rr}=-\p{V\chi''+\frac{rV'+2V}{r}\chi'-\frac{r^2\p{V'}^2+4(n-1)V^2}{2r^2V}\chi+\frac{r^2\p{V'}^2-4V^2}{2r^2V}\psi}g_{rr}$$
while 
$$2(\overset{\circ}{R}h)_{rr}=\p{\frac{V'}{r}\chi-\frac{rV''-V'}{r}\psi}g_{rr}$$
These combine to
\begin{align*}\p{\Delta_Eh}_{rr}=-\Bigg(V\chi''+\frac{rV'+2V}{r}\chi'&-\frac{r^2\p{V'}^2+4(n-1)V^2-2rVV'}{2r^2V}\chi\\&\qquad +\frac{r^2\p{V'}^2-4V^2+2rV\p{V'-rV''}}{2r^2V}\psi\Bigg)g_{rr}.\end{align*}
If one then inserts the divergence constraint \eqref{diveq}, it becomes a second order ODE in $\chi$:
\begin{align*}\p{\Delta_Eh}_{rr}=-\Bigg(V\chi''&-\frac{8V^2+2rV\p{rV''-V'}-2r^2(V')^2}{r\p{rV'-2V}}\chi'\\ &-\frac{4VV'-r\p{(n+2)(V')^2-2(n-1)VV''}+r^2V'V''}{r\p{rV'-2V}}\chi\Bigg) g_{rr}.\end{align*}

As $\Delta_E$ maps $TT_g$ to itself, the tensor $k:=\Delta_Eh-\lambda h$ must necessarily also be of the form of \eqref{diagonalformofh} and satisfy \eqref{diveq}. Thus, if it is possible to find a $\lambda$ such that $k_{rr}=0$, the entirety of $k$ must vanish. We can therefore get away with only solving $k_{rr}=0$, the master equation.\\

For the remainder of the explicit calculations, we shall set $n=4$. This is foremost for the sake of readability, but also to allow us to exhibit an alternative approach in the case of $n\geq 5$. In four dimensions the master equation can be written as   
\begin{equation}\label{master}P(r)\chi''(r)+Q(r)\chi'(r)+R(r)\chi(r)=0,\end{equation}
where
\begin{align*}
    P(r):=&-\frac{r\p{rV'-2V}}2\\
    Q(r):=&-\frac{2r^2(V')^2-2rV\p{rV''-V'}-8V^2}{2V}\\
    R(r):=&\frac{r^2\p{V'V''-\lambda V'}-2r\p{3(V')^2-3VV''-\lambda V}+4VV'}{2V}.
\end{align*}
It is clear that the equation has two regular singular points in its domain, one at $r=r_0$ where $V(r)=0$, and one at $r =3m$, the single positive root of $rV'-2V$. We may develop Frobenius solutions around either of these, but $r=3m$ will be the most interesting case for us.\\ 

We shall, in fact, not have to concern ourselves overmuch with the event horizon $r=r_0$ at all. For any $h$ given by \eqref{diagonalformofh}, the divergence constraint, \eqref{diveq}, implies
$$\psi(r_0)=\frac{2r_0V(r_0)}{r_0V'(r_0)-2V(r_0)}\chi'(r_0)+\frac{r_0V'(r_0)+6V(r_0)}{r_0V'(r_0)-2V(r_0)}\chi(r_0)=\frac{r_0V'(r_0)}{r_0V'(r_0)}\chi(r_0)=\chi(r_0),$$
since $V(r_0)=0$. So, as long as $\chi$ and $\psi$ are themselves regular at $r_0$, the perturbation preserves the regularity of the metric.\\

By developing a Frobenius series around the other singular point, $r=3m$, we may consider the effect of various boundary conditions, imposed at - or near - this radius. The coefficient functions are developed as Taylor series around $r_c=3m$:
\begin{align*}
    P(r):=&r-3m\\
    Q(r):=&-2+\frac{10}{3m}(r-3m) -\frac{2\p{11 + 45 \mu m^2}}{9m^2\p{1 + 9\mu m^2}}(r-3m)^2+\cdots\\
    R(r):=&-\frac{8}{3m} +\frac{27\lambda m^2 + 32 + 234\mu m^2}{9m^2\p{1 + 9\mu m^2}}(r-3m)-\frac{2\p{27 \lambda m^2 + 52  + 198\mu m^2}}{27m^3\p{1 + 9\mu m^2}}(r-3m)^2+\cdots
\end{align*}
Assuming a solution to \eqref{master} of the type
$$\chi(r)=(r-3m)^k\sum_{\ell=0}^\infty a_\ell(r-3m)^{\ell},$$
we may obtain the possible values of $k$ by solving the indicial equation:
$$k^2+\p{\frac{Q(3m)}{P'(3m)}-1}k=k(k-3).$$
The greater root, $k=3$, will always provide a solution, while the smaller, $k=0$, will only give a regular solution if it is possible to solve for the third coefficient. Assuming $k=0$, we find the following recursive relations for the first few coefficients
\begin{align*}
    a_1=-\frac{4a_0}{3m},\qquad \qquad 
    a_2=\frac{\p{\p{27\lambda + 234\mu}m^2 + 32}a_0 + 6m\p{1 + 9\mu m^2}a_1}{18m^2\p{ 1+ 9\mu m^2}},\qquad \qquad a_3=\text{'arbitrary'}.
\end{align*}
With no restriction on $a_3$, the $k=0$ solution is regular as well. In fact, if we impose our boundary conditions at $R=3m$, the possible values of $\lambda$ depend solely on this solution. Setting $a_0=1$ and $a_3=0$, we obtain  
\begin{equation}\label{chisol}\chi(r)=1-\frac{4}{3m}(r-3m)+\frac{9\lambda m^2  + 54\mu m^2+ 8}{6m^2\p{1+ 9\mu m^2}}(r-3m)^2+\mathcal{O}\p{(r-3m)^4}.\end{equation}

Recall that it was the imposition of \eqref{BC2nongeometric} that proved that $h$ had to be traceless. We should therefore be able to pin down $\lambda$ by imposing \eqref{BC1}, conformal preservation of the induced metric on $\PS_R$. For this, we require
$$h^T=\psi(R)V(R)\; dt^2-\frac{\psi(R)+\chi(R)}{2}r^2g_{\mathbb{S}^2}=\varphi\p{V(R)\; dt^2+r^2g_{\mathbb{S}^2}}=\varphi g^T,$$
for some $\varphi\in \R$. In other words, we must have 
\begin{equation}\label{psiequalsthirdchi}\psi(R)=-\frac{\chi(R)}{3}.\end{equation}

Expanding \eqref{diveq} to first order around $3m$, we obtain
\begin{equation*}
    \psi(r)=\frac{18\mu m^2-27\lambda m^2-1}{3}-\frac{2\p{72\mu m^2+27\lambda m^2+16}}{3m}(r-3m)+\cdots 
\end{equation*}
Imposing \eqref{psiequalsthirdchi} at $r=R\sim 3m$ yields
\begin{equation}\label{bctaylor}0=\psi(R)+\frac{\chi(R)}3=3m^2\p{2\mu-3\lambda}-\frac{2\p{216\mu m^2+81\lambda m^2 + 50 }}{9m}(R-3m)+\cdots\end{equation}
At $R=3m$, the only possible solution is 
$$\lambda=\frac{2\mu}{3}.$$
This proves the principal eigenvalue of $\Delta_E$, restricted to spherically symmetric modes, is strictly positive for all values of $\mu>0$. Furthermore, the same expression gives the principal eigenvalue for the standard Schwarzschild metric: $\lambda_{SC}=0$. We shall also prove that $\lambda_{SC}$ is strictly decreasing as a function of $R$, which provides grounds for the following theorem:
\begin{theorem}\label{thm:S}
    The four dimensional Schwarzschild metric ($\mu =0$) becomes unstable when the cavity radius passes the photon sphere $R=3m$. Concretely, there exists $r_0<L\leq \infty$ such that $g_{SC}$ is unstable on $M_R$ for all $R\in (r_0,L)$.
\end{theorem}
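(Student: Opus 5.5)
The plan is to follow the branch of spherically symmetric eigenvalues $\lambda=\lambda(R)$ of $\Delta_E$ on $TV_g$, with $\mu=0$, as the cavity radius $R$ varies. I will show that this branch passes through $\lambda=0$ at $R=3m$ with strictly negative slope. A negative eigenvalue of $\Delta_E$ on $TV_g$ is, by definition, mode instability, so it is enough to show $\lambda(R)<0$ for $R$ slightly larger than $3m$. The interval of instability is then extended up to a maximal $L\le\infty$, namely the first radius beyond $3m$ at which the branch returns to $0$, or $L=\infty$ if it never does. I read the interval in the statement as starting at the photon sphere $3m$, in line with the first sentence of the theorem.

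First, I must check that $\lambda=0$ really is an eigenvalue at $R=3m$ for a globally admissible mode. The Frobenius solution \eqref{chisol} is only local near $r=3m$, so it must extend to $[r_0,3m]$ and be regular at $r_0$. I will produce this mode explicitly at $\lambda=0$. The natural candidate is the Einstein deformation given by varying the mass parameter $m$, combined with the rescaling that fixes the period $\beta$, and then projected onto diagonal, traceless, transverse form \eqref{diagonalformofh}.

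Regularity at the horizon is automatic once $\chi$ and $\psi$ are smooth, by the computation $\psi(r_0)=\chi(r_0)$. I then check that this $\chi$ is the $k=0$ Frobenius branch at $3m$, with the admissible $a_3$, and that it satisfies \eqref{psiequalsthirdchi} at $R=3m$. This reproduces $\lambda=2\mu/3=0$.

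Next, I treat the eigenvalue problem as a one-parameter family in $R$. Since $\Delta_E$ is self-adjoint and Fredholm with discrete spectrum (Proposition \ref{prop:fredholm}), I pull $M_R$ back to the fixed manifold $M_{3m}$ by a radial dilation. The eigenvalue problem then becomes an analytic family of ODE boundary problems, and a simple eigenvalue depends smoothly on $R$. Concretely, the global solution $\chi(r;\lambda)$ that is regular at $r_0$ is analytic in $\lambda$. The eigenvalue condition is
$$G(\lambda,R):=\psi(R;\lambda)+\tfrac13\chi(R;\lambda)=0.$$
Near $(0,3m)$ this is exactly \eqref{bctaylor}, which at $\mu=0$ reads
$$0=-9m^2\lambda-\frac{2(81\lambda m^2+50)}{9m}(R-3m)+\cdots.$$
Because $\partial_\lambda G(0,3m)=-9m^2\neq0$, the implicit function theorem gives a smooth branch with
$$\lambda(R)=-\frac{100}{81m^3}(R-3m)+\mathcal{O}\p{(R-3m)^2}.$$
This is negative for $R>3m$ close to $3m$.

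Finally, I let $L$ be the supremum of radii $R'>3m$ such that the branch stays negative on $(3m,R')$. Continuity of the branch, which holds as long as $\partial_\lambda G\neq0$ along it, then gives instability on the whole of $(3m,L)$.

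The main obstacle is the first step together with the legitimacy of \eqref{bctaylor}. The Taylor coefficients in \eqref{bctaylor} were derived from the local Frobenius solution with $a_3=0$. For the global eigenfunction, however, $a_3$ is fixed by regularity at $r_0$ and may depend on $\lambda$. I must therefore confirm that $a_3$ enters $G$ only at order $(R-3m)^3$, so that the slope above is unaffected. I expect this to hold, since the $k=0$ and $k=3$ solutions differ by $(r-3m)^3$. If instead the explicit mass-variation mode turns out not to be in Bianchi or $TV$ gauge, I would fall back on a variational argument. That is, I would construct a test tensor in $TV_g$ on $M_R$ with $(F_gh,h)_{L^2}<0$ by perturbing the $R=3m$ kernel element and using the first-order boundary term from the Green identity \eqref{greens}.
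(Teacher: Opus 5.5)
Your route is the paper's: a Frobenius expansion at $r=3m$, with the slope read off \eqref{bctaylor}, and the implicit function theorem and the horizon check made explicit. However, the step you single out as the main obstacle fails, and the slope you quote fails with it.

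The cubic coefficient does not enter $G(\lambda,R)=\psi(R)+\tfrac13\chi(R)$ at order $(R-3m)^3$. The function $\psi$ comes from \eqref{diveq}, which divides $2rV\chi'$ by $rV'-2V=-2(r-3m)/r$ (at $\mu=0$), so two orders are lost. The $k=3$ solution $\chi_3=(r-3m)^3+\cdots$ gives $\psi_3=-3r^2V(r-3m)+\cdots=-9m^2(r-3m)+\cdots$. Redoing the expansion with $a_0=1$ and $a_3$ free yields
\[
G(\lambda,R)=-9m^2\lambda-\Bigl(9m^2a_3+18m\lambda+\frac{100}{9m}\Bigr)(R-3m)+\mathcal{O}\bigl((R-3m)^2\bigr),
\]
so $\lambda'(3m)=-a_3(0)-\frac{100}{81m^3}$. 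The value $-\frac{100}{81m^3}$ is just the choice $a_3=0$. That choice is not the solution regular at $r_0=2m$: the second solution there behaves like $(r-2m)^{-1}$, and discarding it fixes $a_3$. As it stands, your argument therefore does not determine the sign of the slope. The paper's use of \eqref{bctaylor} with $a_3=0$, and its claim of $(r-3m)^3$ accuracy, rest on the same oversight.

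The repair is to use the mode from your first step to compute $\partial_R G$ directly, and your candidate works. Take $\partial_m g$ at fixed $(r,t)$. The tensor $2m\,\partial_m g+4V\,dt^2-g$ (the mass variation at fixed period, minus $g$) is transverse and traceless, but behaves like $(r-2m)^{-1}$ at the horizon. So does the pure-gauge TT tensor $\delta^*(r^{-2}V^{-1}dr)$. The combination $2m\,\partial_m g+4V\,dt^2-g+32m^3\delta^*(r^{-2}V^{-1}dr)$ is smooth, with
\[
\chi=-1+\frac{4m}{r}+\frac{8m^2}{r^2}-\frac{48m^3}{r^3},\qquad \psi=3-\frac{4m}{r}-\frac{8m^2}{r^2}-\frac{16m^3}{r^3},
\]
and $\chi(2m)=\psi(2m)=-3$; one checks directly that this $\chi$ solves the master equation at $\lambda=0$.

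From these formulas, $\psi(R)+\tfrac13\chi(R)=\frac{8}{3R^3}(R-3m)(R^2+2mR+4m^2)$. Hence at $\lambda=0$ the boundary condition holds only at $R=3m$, and $\partial_R G(0,3m)=\frac{152}{81m}$. For every solution analytic at $3m$ one has exactly $G(\lambda,3m)=-9m^2\lambda\,\chi(3m)$, and here $\chi(3m)=-\frac59$. This gives $\partial_\lambda G(0,3m)=5m^2$ and
\[
\lambda'(3m)=-\frac{152}{405m^3}<0,
\]
equivalently $a_3(0)=-\frac{116}{135m^3}$ after normalising $a_0=1$.

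So instability just beyond $3m$ does hold, and the rest of your argument (the implicit function theorem step and the definition of $L$) then goes through. This works only via the horizon-regular computation, though, and the slope is not $-\frac{100}{81m^3}$.
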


This method does not allow us to prove directly, that even for $\mu>0$ does the metric develop an instability if the radius of the cavity is increased. However, numerical estimation suggest that this is at least the case for small values of $\mu$:

\begin{figure}[ht]
    \centering\hspace{0.5cm}
    \def\svgwidth{0.9\textwidth}
    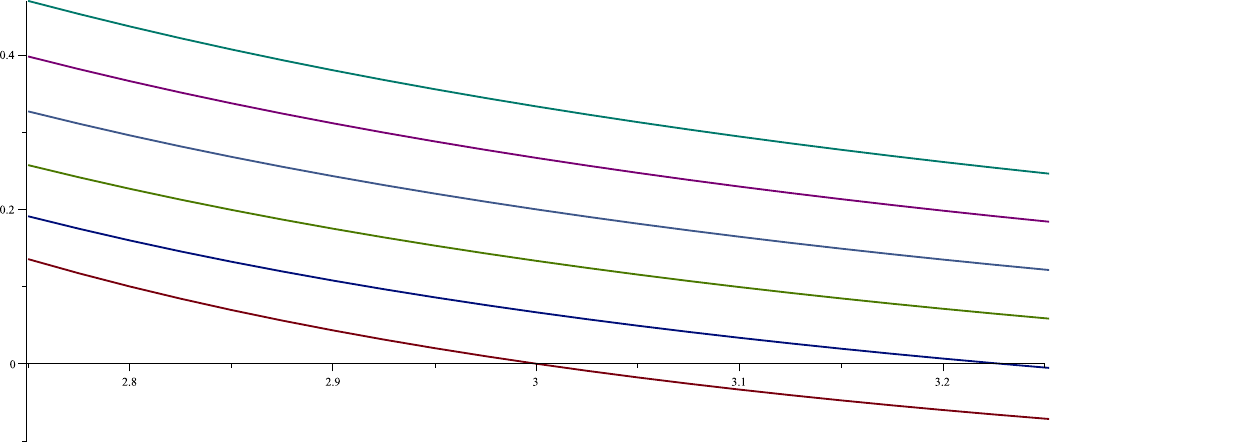
   \caption{Numerical estimates of the lowest eigenvalue in dimension $n=4$. The values are exact at $r=3m$ (dotted line), and the precision decreases as $(r-3m)^3$.}
\end{figure}

The Schwarzschild metric exhibits a similar behaviour to that which we saw for the conformal perturbations of the hemisphere in Example \ref{example:spherical_cap}. Specifically, the stability operator has non-trivial kernel when $R=3m$, is positive definite for slightly smaller $R$ and has a negative eigenvalue for slightly larger $R$. This can be proven directly from \eqref{bctaylor}, as it shows that for $R\sim 3m$,
$$\lambda_{SC}\sim -\frac{100(R-3m)}{(9m)^2\p{m+2(R-3m)}}.$$
In particular,
$$\left.\frac{d\lambda_{SC}}{dR}\right|_{R=3m}=-\frac{100}{(9m)^2m},$$
which shows that the metric develops its instability exactly at $R=3m$.\\

The calculations above can be replicated for $n>4$ and $R=((n-1)m)^{1/(n-3)}$, but we shall take a different approach. Note that the indicial equation is independent of the dimension. In particular, one may develop Frobenius solutions of order $k=0$ and $k=3$ at the regular singular point $R=\p{(n-1)m}^{1/(n-3)}$ for all $n\geq 4$. As before, the spectrum only depends on the $k=0$ solution, which implies that $h$ does not vanish identically on the boundary. We may then use a constraint condition to find the eigenvalue corresponding to a solution of this type. Since $\partial M_R$ is totally umbilic and $\tr h=0$, \eqref{hidden1} simplifies to 
$$\Delta_{g^T}\varphi+\mu\varphi-\frac{n-1}{n-2}\lambda\varphi=0,$$
where we also corrected the sign of the Einstein constant to be consistent with the definition of this section. Since we are only considering radial eigenmodes, we have $\Delta_{g^T}\varphi=0$. As $\varphi$ can not vanish identically along the boundary, it means 
$$\lambda=\frac{n-2}{n-1}\mu,$$
which is consistent with the explicit result we got for $n=4$.

\pagebreak

\end{document}